\documentclass [oneside,11pt] {amsart} 

\usepackage{amsfonts}
\usepackage{amssymb}
\usepackage{mathtools}
\usepackage{hyperref}
\usepackage{comment}
\usepackage{color,xcolor}
\usepackage{a4}
\usepackage{geometry}
\usepackage{todonotes}

\mathtoolsset{showonlyrefs}

\usepackage[title]{appendix}

\textheight=222mm \textwidth=160mm
\topmargin=-0.1cm
\oddsidemargin= -0.0cm
\evensidemargin=-0.0cm

\newcommand{\floor}[1]{\lfloor #1 \rfloor}

\numberwithin{equation}{section}
\newtheorem{theorem}{Theorem}[section]

\newtheorem{prop}{Proposition}[section]
\newtheorem{lemma}{Lemma}[section]
\newtheorem{defn}{Definition}[section]

\newcommand{\R}{\mathbb{R}}
\newcommand{\p}{\partial}

\newcommand{\supp}{\mathrm{supp}}

\newcommand{\dx}{\,dx}

\newcommand{\LC}{\left(}
\newcommand{\RC}{\right)}

\newcommand{\LA}{\langle}
\newcommand{\RA}{\rangle}

\usepackage[framemethod=tikz]{mdframed}
\usepackage{url}
\definecolor{mycolor}{rgb}{0.122, 0.435, 0.698}
\definecolor{aliceblue}{rgb}{0.94, 0.97, 1.0}

\newmdenv[innerlinewidth=0.5pt, roundcorner=4pt,linecolor=mycolor,innerleftmargin=6pt,
innerrightmargin=6pt,innertopmargin=6pt,innerbottommargin=6pt]{mybox}
\newmdenv[backgroundcolor=aliceblue,innerlinewidth=0.5pt, roundcorner=4pt,linecolor=mycolor,innerleftmargin=6pt,
innerrightmargin=6pt,innertopmargin=6pt,innerbottommargin=6pt]{mybox1}
\newmdenv[backgroundcolor=green!6,innerlinewidth=0.5pt, roundcorner=4pt,linecolor=mycolor,innerleftmargin=6pt,
innerrightmargin=6pt,innertopmargin=6pt,innerbottommargin=6pt]{mybox2}

\newcommand\redsout{\bgroup\markoverwith{\textcolor{red}{\rule[0.5ex]{2pt}{0.4pt}}}\ULon}

\author[G. Covi]{Giovanni Covi}
\address{Department of Mathematics and Statistics, University of Jyv\"askyl\"a, Jyv\"askyl\"a, Finland}
\curraddr{}
\email{giovanni.g.covi@jyu.fi}

\author[R.-Y. Lai]{Ru-Yu Lai}
\address{School of Mathematics, University of Minnesota, Minneapolis, MN 55455, USA}
\curraddr{}
\email{rylai@umn.edu}

\author[L. Yan]{Lili Yan}
\address{Department of Mathematics,  North Carolina State University, Raleigh, NC 27695, USA}
\curraddr{}
\email{lyan6@ncsu.edu}

\keywords{Fractional Schr\"odinger equation, nonlinear local perturbations, uniqueness.}
\subjclass[2020]{Primary: 35R30. Secondary: 35Q55}

\begin{document}

\title[]{The higher-order fractional Schr\"odinger equation with nonlinear local perturbations: Uniqueness}

\begin{abstract}
We study the higher-order fractional Schr\"odinger equation with local nonlinear perturbations and investigate both the forward and inverse problems. We establish both the Sobolev $H^s$ and H\"older $C^s$ estimates for the well-posedness of the nonlinear problem, based on the corresponding estimates derived for the linear fractional Schr\"odinger equation. For the inverse problem, we show that the local nonlinear perturbations can be uniquely determined from the Dirichlet-to-Neumann map, by using the higher-order linearization and the unique continuation property of the fractional Laplace operator. 
\end{abstract}

\maketitle

\tableofcontents

\section{Introduction}

Let $\Omega$ be a bounded open set in $\R^n$, $n\geq 1$, with smooth boundary $\p\Omega$, and 
$\Omega_e := \R^n\setminus \overline\Omega$ be the exterior of $\Omega$. 
Let $m\geq 0$ be an integer and $s\in \R^+\setminus \mathbb{Z}$, where $\R^+:=(0,\infty)$ and $\mathbb{Z}$ denotes the set of all integers in $\R$.  
We consider inverse problems for the nonlinear fractional Schr\"odinger equation (NLFSE):
\begin{equation}\label{eq_1_IBVP}
\begin{cases}
(-\Delta)^su + q(x)u +\mathbf{P}(u)= 0 &\hbox{ in } \Omega,\\
u = f &\hbox{ in } \Omega_e,
\end{cases}
\end{equation}
where the fractional Laplacian $(-\Delta)^s$ is the nonlocal pseudo-differential operator defined by $(-\Delta)^su: = \mathcal{F}^{-1}(|\cdot|^{2s}\mathcal{F}(u))$. Here $\mathcal{F} (u)$ represents the Fourier transform of $u$.  
The local nonlinear perturbation $\mathbf{P} (u)$ is defined as 
\begin{align}\label{DEF:nonlinear P}
\mathbf{P} (u) := uP_1(x,D)u + u^2 P_2(x,D)u+\ldots+u^{K-1}P_{K-1}(x,D)u,
\end{align}
where $K\geq 2$ is an integer, and the linear differential operators $P_k(x,D)$ of order $m$ are of the form
$$
P_k(x,D):=\sum_{\sigma: \,|\sigma|\leq m}a_{\sigma,k}(x)D^\sigma,\quad k=1,\ldots,K-1,
$$
with scalar coefficients $a_{\sigma,k}$ depending only on $x$. Here $\sigma=(\sigma_1,\ldots,\sigma_n)$ is multi-index of length $|\sigma|=\sigma_1+\ldots+\sigma_n=\ell$ for $\ell\leq m$, and therefore $D^\sigma u := \frac{\p^{\sigma_1}}{\p x_1^{\sigma_1}}\ldots\frac{\p^{\sigma_n}}{\p x_n^{\sigma_n}}  u$. \\

We show in Theorem \ref{thm_wellposdness} that \eqref{eq_1_IBVP} is well-posed in $H^s(\R^n)$ for any data $f$ in a small ball $\mathcal{X}_{\varepsilon_0}(\Omega_e)$ in $C^\infty_c(\Omega_e)$, where for any open set $O\subset \mathbb R^n$ we define 
\[
\mathcal{X}_{\varepsilon_0}(O):=\{f\in C^\infty_c(O):\,\|f\|_{C^\infty_c(O)}\le\varepsilon_0\}.
\]
This well-posedness result for \eqref{eq_1_IBVP} allows us to define the corresponding Dirichlet-to-Neumann (DN) map 
$$
\Lambda_{\mathbf{P}}: H^{s}(\Omega_e)\rightarrow (H^{-s}(\Omega_e))^*,\quad  f\mapsto  (-\Delta)^s u_f|_{\Omega_e}.
$$

In this framework, we ask the following inverse problem: Does the map $\Lambda_{\mathbf{P}}$ uniquely determine the nonlinearity $\mathbf P$? The answer is affirmative, and we give a full description of the results in the following Theorem~\ref{thm_main}:

\begin{theorem}\label{thm_main}(Unique determination)
Let $\Omega\subset\R^n$ be a bounded open set with smooth 
boundary $\p\Omega$. Let $s\in \R^+\setminus \mathbb{Z}$, $\floor{s}>\max\{m,\,n/2\}$.
Suppose that $q_j\in L^\infty(\Omega)$ satisfies $q_j\geq 0$ in $\Omega$ for $j=1,\,2$.
Let $\mathbf{P}_1$ and $\mathbf{P}_2$ be nonlinearities of the form \eqref{DEF:nonlinear P} with coefficients $a_{\sigma,k}^{(1)},\, a_{\sigma,k}^{(2)}(x) \in C (\overline\Omega)$. Let $W_1,\, W_2\subset\Omega_e$ be two arbitrary nonempty bounded open sets. If the DN-maps $\Lambda_{\mathbf{P}_j}: \mathcal{X}_{\varepsilon_0}(W_1)\rightarrow (H^{-s}(\Omega_e))^*$ satisfy
\[
\Lambda_{\mathbf{P}_1}f|_{W_2} = \Lambda_{\mathbf{P}_2}f|_{W_2} \quad \mbox{ for all } f \in \mathcal{X}_{\varepsilon_0}(W_1), 
\]
then for all multi-indices $\sigma$ such that $0\le |\sigma|\le m$ and for all $k=1,\ldots, K-1$ it holds
$$	 
q_1=q_2\quad\hbox{and}\quad	a_{\sigma,k}^{(1)} = a_{\sigma,k}^{(2)} \quad \hbox{ in }\Omega,
$$ 
and thus the nonlinearities $\mathbf P_1$ and $\mathbf P_2$ coincide.
\end{theorem}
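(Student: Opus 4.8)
\medskip

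\noindent\textbf{Proof idea.} The argument runs through the higher-order linearization scheme, with the unique continuation property (UCP) of $(-\Delta)^s$ — and the Runge approximation it yields — playing the role usually played by special solutions. \emph{Step 1 (first linearization; recovery of $q$).} By Theorem~\ref{thm_wellposdness} the solution map $f\mapsto u_f$ of \eqref{eq_1_IBVP} is smooth near $f=0$ with $u_0=0$. For $f_1\in C^\infty_c(W_1)$ and $\varepsilon$ small set $v_1:=\partial_\varepsilon u_{\varepsilon f_1}\big|_{\varepsilon=0}$; since $\mathbf P$ vanishes to second order at $u=0$, $v_1$ solves the \emph{linear} equation $(-\Delta)^s v_1+q_j v_1=0$ in $\Omega$, $v_1=f_1$ in $\Omega_e$. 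Differentiating $\Lambda_{\mathbf P_1}f|_{W_2}=\Lambda_{\mathbf P_2}f|_{W_2}$ once at $f=0$ yields agreement of the two linear DN maps on $W_2$ for all $f_1\in C^\infty_c(W_1)$ (the smallness constraint drops by scaling), and the known uniqueness result for the (higher-order) linear fractional Schr\"odinger equation with disjoint exterior data/measurement sets gives $q_1=q_2=:q$ in $\Omega$; since $q\ge 0$, all linear problems below are uniquely solvable.

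\emph{Step 2 (induction on the order of nonlinearity).} We prove $a^{(1)}_{\sigma,k}=a^{(2)}_{\sigma,k}$ in $\Omega$ for all $|\sigma|\le m$ by induction on $k=1,\dots,K-1$, assuming $P^{(1)}_1=P^{(2)}_1,\dots,P^{(1)}_{k-1}=P^{(2)}_{k-1}$ (empty for $k=1$). Put $f=\sum_{i=1}^{k+1}\varepsilon_i f_i$ with $f_i\in C^\infty_c(W_1)$ and $v_I:=\partial_{\varepsilon_I}u_f\big|_{\varepsilon=0}$ for $I\subseteq\{1,\dots,k+1\}$, where $\partial_{\varepsilon_I}=\prod_{i\in I}\partial_{\varepsilon_i}$; thus $v_{\{i\}}=v_i$ solves the linear equation with data $f_i$. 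Computing $\partial_{\varepsilon_1}\cdots\partial_{\varepsilon_{k+1}}\mathbf P(u_f)\big|_{\varepsilon=0}$ and using $u_f|_{\varepsilon=0}=0$, the term $u^\ell P_\ell(x,D)u$ contributes nothing for $\ell>k$, contributes $k!\sum_{i=1}^{k+1}\big(\prod_{j\ne i}v_j\big)P_k(x,D)v_i$ for $\ell=k$, and for $\ell<k$ contributes an expression built only from $P_\ell$ and from variations $v_I$ with $|I|\le k$. Because $q,P_1,\dots,P_{k-1}$ are common to both systems, all variations of order $\le k$ coincide, $v^{(1)}_I=v^{(2)}_I$; hence, with $Q_k:=P^{(1)}_k-P^{(2)}_k$, the difference $w:=\partial_{\varepsilon_1}\cdots\partial_{\varepsilon_{k+1}}\big(u^{(1)}_f-u^{(2)}_f\big)\big|_{\varepsilon=0}$ satisfies
\[
(-\Delta)^s w+q w=-k!\sum_{i=1}^{k+1}\Big(\prod_{j\ne i}v_j\Big)Q_k(x,D)v_i\quad\text{in }\Omega,\qquad w=0\quad\text{in }\Omega_e .
\]
Differentiating the DN-map hypothesis $k+1$ times gives $(-\Delta)^s w|_{W_2}=0$; since also $w|_{W_2}=0$, the UCP of $(-\Delta)^s$ forces $w\equiv 0$ in $\R^n$, whence $\sum_{i=1}^{k+1}\big(\prod_{j\ne i}v_j\big)Q_k(x,D)v_i=0$ in $\Omega$ for all linear solutions $v_1,\dots,v_{k+1}$ with exterior data in $C^\infty_c(W_1)$.

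\emph{Step 3 (reading off the coefficients).} Since $\floor s>\max\{m,n/2\}$, $H^{\floor s}$ is a Sobolev algebra embedding into $C^0$, $P_k(x,D)$ maps it into $H^{\floor s-m}\subset L^2$, so the identity of Step 2 is stable under $H^{\floor s}(\Omega)$-convergence of the $v_i|_\Omega$. Runge approximation for the linear fractional Schr\"odinger equation — itself a consequence of the UCP via Hahn--Banach — lets the $v_i|_\Omega$ approximate any element of $H^{\floor s}(\Omega)$, in particular the constant $1$ and the monomials $x^\alpha$. Choosing $v_1=\dots=v_{k+1}\approx 1$ gives $Q_k(x,D)1=a^{(1)}_{0,k}-a^{(2)}_{0,k}=0$; choosing $v_1=\dots=v_k\approx 1$, $v_{k+1}\approx x^\alpha$ and using $Q_k(x,D)1=0$ gives $Q_k(x,D)x^\alpha=0$, and an induction on $|\alpha|$ from $1$ to $m$ (using $D^\sigma x^\alpha=0$ unless $\sigma\le\alpha$) yields $a^{(1)}_{\sigma,k}=a^{(2)}_{\sigma,k}$ for all $|\sigma|\le m$. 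This closes the induction, and together with Step 1 gives $\mathbf P_1=\mathbf P_2$.

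\emph{Main difficulty.} The crux is Step 3 together with the combinatorial accounting in Step 2: one must both verify that, after subtracting the two systems, every contribution to the source except the $Q_k$-term cancels, and — more technically — control the linear solutions in a norm strong enough to apply the $m$-th order operators $P_k$ and to take the products appearing in $\mathbf P$. This is precisely what the hypothesis $\floor s>\max\{m,n/2\}$, the well-posedness and interior estimates of Theorem~\ref{thm_wellposdness}, and the resulting Runge approximation in $H^{\floor s}$ are designed to secure.
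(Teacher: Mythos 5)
Your proposal is correct and follows the same overall blueprint as the paper (higher-order linearization in the small parameter, recovery of $q$ via the first-order step and the known linear result, then an inductive recovery of the $a_{\sigma,k}$ using Runge approximation and the algebraic trick of plugging in approximate constants and monomials). There is, however, one genuine structural difference worth noting. In the paper, the $(k+1)$-st derivative of the DN-map equality is paired against an adjoint solution $v_0$ of the linear equation with exterior datum in $C_c^\infty(W_2)$; this yields the \emph{integral} identity $\int_\Omega(\mathcal T_{1,\alpha}-\mathcal T_{2,\alpha})v_0\,dx=0$, and Runge approximation is then applied on \emph{both} sides — to $v_0$ (giving an arbitrary $h_0\in C_c^\infty(\Omega)$) and to the $v_\ell$'s. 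You instead test against $g\in C_c^\infty(W_2)$, conclude that $(-\Delta)^s w=0$ in $W_2$ while $w=0$ in $\Omega_e\supset W_2$, invoke the UCP for the higher-order fractional Laplacian to force $w\equiv 0$, and read off the \emph{pointwise} vanishing of the linearized source in $\Omega$. This removes the need for a separate Runge step on the adjoint side, so you only approximate the $v_\ell$'s. Since the paper's Runge lemma is itself a Hahn--Banach consequence of the same UCP, the two routes rest on identical ingredients; yours is a bit more streamlined, at the price of having to justify that $(-\Delta)^s w|_{W_2}$ is a well-defined $L^2_{loc}$ function and that UCP in the higher-order setting applies to $w\in H^s(\R^n)$ vanishing near $W_2$ — both of which hold in this regularity framework. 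One small imprecision in Step~3: Runge gives density in $\widetilde H^s(\Omega)$, which suffices to approximate $C_c^\infty(\Omega)$ functions in the norms you need; the claim of approximation of \emph{arbitrary} $H^{\floor s}(\Omega)$ functions is overstated but harmless since only compactly supported $h_j$ are used.
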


\subsection{Motivation and connection to the literature}
This work addresses a natural generalization of the inverse problems for linear fractional elliptic equations, such as the fractional Schr\"odinger equation $(-\Delta)^s+q$ for $s\in (0,1)$. Our unique determination result of Theorem \ref{thm_main} particularly finds its place within the context of the very active field of inverse problems of fractional Calder\'on problem,  known as the fractional analogue of Calder\'on problem. First introduced in the seminal work \cite{GSU20}, the fractional Calder\'on problem asks to uniquely determine a scalar potential $q$ in the fractional Schr\"odinger equation from the related DN-map. It corresponds to our problem \eqref{eq_1_IBVP} in the case $\mathbf P \equiv 0$ for $s\in (0,1)$. Already in the first work \cite{GSU20}, the authors were able to provide a uniqueness result from partial data for $L^\infty$ potentials. 

Besides the mathematical interest, the fractional Calder\'on problem arises naturally in the sciences, due to the close connection between the fractional Laplace operator and anomalous diffusion \cite{CL24,V09}. A novel application is that of the recent paper \cite{LNOS25}, which considers an inverse problem of fractional Calder\'on type related to quantum field theory. Many aspects of the fractional Calder\'on problem have been studied in recent years: among these, we recall uniqueness for low regularity partial data \cite{RS20a} and anisotropic background metrics \cite{GLX17}, reconstruction for a single measurement \cite{GRSU20} and by monotonicity methods \cite{ HL20, HL19}, the stability and instability results of \cite{ BCR25,RS18, RS20b}, general anisotropic metrics \cite{Fei24, FGKRSU25, FGKU21},
and boundary reconstruction \cite{CR25}. Moreover, the entanglement principles are studied for fractional poly-elliptic \cite{FKU24, FL24} and poly-parabolic operators \cite{LLYan25} to decouple the entangled effect of nonlocal perturbations. We refer to the survey \cite{S17} and to the recent book \cite{LL25book} for many more related results on the topic of nonlocal inverse problems. Incidentally, the fractional Calder\'on problem has been studied also in its conductivity formulation, as shown in \cite{Cov20a} and the subsequent works \cite{Cov20b, Cov22, CMR21, CMRU22, CRZ22}. We refer to \cite{Cov25} for a recent survey on the topic. Other equations of fractional and, more generally, nonlocal nature have also been the object of intensive studies. Among these, we recall inverse problems for fractional elasticity \cite{CdHS22}, the fractional Helmholtz equation via geometrical optics solutions \cite{CdHS25}, and the fractional wave equation \cite{KLW22, LZ25}, among many others \cite{DGM25, Li23, LW25, Lin23}. The relations between the fractional and classical Calder\'on problems have also attracted substantial attention \cite{BCR25, BR25, CGRU23, R23}. 

The two distinctive features of our equation \eqref{eq_1_IBVP}  include the \textit{higher-order nonlocal leading operator} and \textit{general local nonlinear perturbations} of the fractional Schr\"odinger equation. It is a local nonlinear perturbation of the higher-order fractional Schr\"odinger equation. As such, it differs substantially from the perturbations considered in \cite{BGU21, CLR20, Cov21,Li21}, which are linear (and sometimes nonlocal), and from those of \cite{CMRU22}, where the most general local linear perturbations of the fractional Schr\"odinger equation were studied. 

The consideration of the higher-order fractional Laplace operator $(-\Delta)^s$ with $s\in \mathbb R^+\setminus\mathbb Z$ in this paper is inspired by the work \cite{CMRU22}, which investigated the unique determination of a general linear term. 
Indeed, the fractional Laplace operator with power $s>1$ has been object of study in several recent publications: we recall \cite{CMRU22, DHP23} for higher-order inverse problems of fractional Schr\"odinger type, and \cite{AJ24, CSZ25, DPS25, FF20, GR19, RS15} for theoretical results concerning the unique continuation property and existence of solutions for the higher-order fractional Laplacian.

Meanwhile, the latter feature falls into a rapidly developing research direction in inverse problems, dealing with reconstruction of nonlinear terms in partial differential equations (PDEs), which are highly motivated by natural phenomena, such as nonlinear optics. To study inverse problems for nonlinear PDEs, a classical linearization technique first introduced in \cite{Isakov93} is to perform first-order linearization of the DN-map. 
Lately, the higher-order linearization method \cite{KLU18} was launched to treat inverse problems for various kind of PDEs. In particular, it was observed that the presence of nonlinearity in an equation turns out to be a great benefit to the study of inverse problems, despite the potential difficulty nonlinearity introduced to solve the forward problems. Specifically, since the nonlinear interactions generate more fruitful information in the reconstruction process, many inverse problems for the nonlinear equations can be resolved, while the underlying inverse problems in the linear setting are still open.
The interested reader is referred to the following relevant literature and the reference therein. The nonlinear Schr\"odinger equation has been studied in \cite{lai_partial_2023, LUY24, Lai_inverse_2021,lassas_inverse_2024}, while a nonlinear version of the wave equation and related acoustic problems were the object of \cite{HTT25, Kal25b, Kal25,LLPT25}. Many more nonlinear-type inverse problems, including for the Boltzmann equation, were considered in \cite{KK25, lai_reconstruction_2021, lai_stable_2023, LZ24,LL25, NS25}. We also recall a recent result about corrosion detection by identification of a nonlinear Robin boundary condition \cite{J25}. The fractional Schr\"odinger \cite{KMS23, KMSS25, lai_global_2018}, wave \cite{LTZ24} and several other related equations \cite{JNS23, JNS25, lai_inverse_2022, lai_recovery_2022} were studied also in the semilinear case. 
 
There is a major difference between the cases $s\in (0,1)$ and $s\in \R^+\setminus\mathbb{Z}$ in $(-\Delta)^s$. For the nonlinear Schr\"odinger equation with $s\in (0,1)$, the availability of the maximum principle helps to ensure the $L^\infty$ bounded solution to the linear Schr\"odinger equation. This can be used to deduce the $C^s$ regularity solution for the nonlinear one, see \cite{lai_global_2018}, which studied the unique determination of the nonlinear potential in $(-\Delta)^s u +q(x,u)=0$. For our case, however, it is necessary to require the condition $\floor{s}>\max\{m,\,n/2\}$ to guarantee the existence of such bounded solution so that we can further derive the $C^s$ regularity, see Section~\ref{sec:pre} for details. With this, the contraction mapping principle is applied to establish the well-posedness for \eqref{eq_1_IBVP}.

\subsection{Outline of the article}

The remaining part of the paper is organized as follows. In Section~\ref{sec:pre} we survey and extend some existing results for the linear Schr\"odinger equation to suit our current setting, and we establish the well-posedness Theorem \ref{thm_wellposdness}. In Section~\ref{sec_linearization} we show that the solution to the nonlinear Schr\"odinger equation \eqref{eq_1_IBVP} can be decomposed with respect to the order of a small parameter. This enables us to translate the  inverse problem for \eqref{eq_1_IBVP} into the study of recovery of the nonhomogeneous term in the linearized equation in Section~\ref{sec:proof of theorem}, where we prove our main result, Theorem \ref{thm_main}. After the acknowledgments of Section~\ref{sec:ack}, we included an Appendix dedicated to computations.

\section{Preliminaries}\label{sec:pre}
In this section we will introduce the function spaces that will be used throughout the paper. Moreover, we will recall existing results for the linear fractional Schr\"odinger equation for $s\in(0,1)$, with the goal of establishing the well-posedness for the nonlinear Schr\"odinger equation. Standing on these facts, the DN-map will be rigorously defined in the last part of this section. 

\subsection{Function spaces}
For $a\in \R$, we define the Sobolev space 
$$
H^a(\R^n):=\{u\in \mathcal{S}(\R^n):\, \mathcal{F}^{-1}(\langle \cdot\rangle^a\mathcal{F}(u))\in L^2(\R^n)\}
$$
equipped with the norm
\[
\|u\|_{H^a(\R^n)} = \|\mathcal{F}^{-1}(\langle \cdot\rangle^a\mathcal{F}(u))\|_{L^2(\R^n)},
\]
where $\mathcal{S}(\R^n)$ is the Schwartz space of rapidly decreasing functions, $\mathcal{F}$ is the Fourier transform defined as 
\[
\mathcal{F}u(\xi) := \int_{\R^n} e^{-ix\cdot\xi}u(x)\dx ,
\]
and $\langle x\rangle:=(1+|x|^2)^{\frac{1}{2}}$.
Let $U$ be an open set in $\R^n$. We define the following spaces:
\begin{align*}
&H^a(U) = \{u|_{U}: \, u\in H^a(\R^n)\},\\
&\widetilde{H}^a(U) = \hbox{closure of } C^\infty_c(U) \hbox{ in } H^a(\R^n),\\
&H^a_0(U) =  \hbox{closure of } C^\infty_c(U) \hbox{ in } H^a(U),\\
&H^a_{\overline{U}} = \{u\in H^a(\R^n):\,\supp(u)\subset \overline{U}\}.
\end{align*}
We equip $H^a(U)$ with the quotient norm $\|u\|_{H^a(U)} = \inf \{\|w\|_{H^a(\R^n)}:\, w\in H^a(\R^n),\, w|_{U} = u\}$. It is known that $\widetilde{H}^a(U) \subseteq H^a_0(U)$. 
If $U$ is also a bounded Lipschitz domain, we have (see \cite{mclean_strongly_2000})
$$
(\widetilde{H}^a(U))^* = H^{-a}(U), \quad (H^{a}(U))^* = \widetilde{H}^{-a}(U),
$$
\begin{align*}
\widetilde{H}^a(U) = H^a_{\overline{U}},\quad a\in \R.
\end{align*}

\subsection{The forward problem}
To establish the well-posedness of problem \eqref{eq_1_IBVP} for the NLFSE, we will start by discussing the existence and uniqueness of weak solutions for the linear fractional Schr\"odinger equation, as well as their properties.

\begin{prop} \cite[Lemma 5.1]{CMR21} \label{prop_wellposedness_linear} 
Let $\Omega\subset\R^n$ be a bounded open set with $C^{1,1}$ boundary $\p\Omega$, and $s\in \R^+\setminus \mathbb{Z}$. Suppose that $f\in H^s(\R^n)$, $F\in (\widetilde{H}^s(\Omega))^*$, and $q\in L^\infty(\Omega)$ such that $q\ge 0$ in $\Omega$. 
Then the Dirichlet problem for the linear Schr\"odinger equation
\begin{equation}\label{eq_linear}
\begin{cases}
(-\Delta)^s v + q(x)v = F &\hbox{ in } \Omega,\\
v = f &\hbox{ in } \Omega_e,
\end{cases}
\end{equation}
has a unique solution $v\in H^s(\R^n)$, which satisfies
\begin{align}\label{linear schrodinger stability estimate}
\|v\|_{H^s(\R^n)}\le C(\|F\|_{(\widetilde{H}^s(\Omega))^*}+ \|f\|_{H^s(\R^n)}),
\end{align}
for a constant $C>0$ independent of $u$, $F$, and $f$.
\end{prop}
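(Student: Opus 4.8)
The plan is to recast \eqref{eq_linear} as a coercive variational problem on $\widetilde H^s(\Omega)$ and apply the Lax--Milgram theorem. First I would reduce to homogeneous exterior data. Writing $w:=v-f$, the exterior condition $v=f$ in $\Omega_e$ is equivalent to $\supp w\subset\overline\Omega$, i.e. $w\in H^s_{\overline\Omega}=\widetilde H^s(\Omega)$, and $w$ solves
\[
(-\Delta)^s w + q\,w = \widetilde F := F - \big((-\Delta)^s f\big)\big|_\Omega - (qf)\big|_\Omega \quad\text{in } \Omega .
\]
Since $(-\Delta)^s\colon H^s(\R^n)\to H^{-s}(\R^n)$ is bounded and $q\in L^\infty(\Omega)$ (so $(qf)|_\Omega\in L^2(\Omega)\hookrightarrow(\widetilde H^s(\Omega))^*$), we get $\widetilde F\in(\widetilde H^s(\Omega))^*$ with $\|\widetilde F\|_{(\widetilde H^s(\Omega))^*}\le\|F\|_{(\widetilde H^s(\Omega))^*}+C\|f\|_{H^s(\R^n)}$, $C=1+\|q\|_{L^\infty}$. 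The relevant bilinear form is
\[
B_q(w,\phi):=\int_{\R^n}(-\Delta)^{s/2}w\,(-\Delta)^{s/2}\phi\,dx+\int_\Omega q\,w\,\phi\,dx,\qquad w,\phi\in\widetilde H^s(\Omega),
\]
both integrals being finite because $(-\Delta)^{s/2}u\in L^2(\R^n)$ for $u\in H^s(\R^n)$ and $q\in L^\infty(\Omega)$; by density of $C^\infty_c(\Omega)$ in $\widetilde H^s(\Omega)$ together with the duality $(\widetilde H^s(\Omega))^*=H^{-s}(\Omega)$, the identity $B_q(w,\phi)=\langle\widetilde F,\phi\rangle$ for all $\phi\in\widetilde H^s(\Omega)$ is equivalent to the displayed PDE in the distributional sense.

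Next I would verify the Lax--Milgram hypotheses. Boundedness of $B_q$ on $\widetilde H^s(\Omega)\times\widetilde H^s(\Omega)$ is immediate from Cauchy--Schwarz, $\|(-\Delta)^{s/2}u\|_{L^2(\R^n)}\le\|u\|_{H^s(\R^n)}$, and $q\in L^\infty(\Omega)$. For coercivity, since $q\ge0$ it suffices to establish the fractional Poincar\'e inequality
\[
\|w\|_{H^s(\R^n)}\le C(\Omega,s)\,\|(-\Delta)^{s/2}w\|_{L^2(\R^n)},\qquad w\in\widetilde H^s(\Omega).
\]
Using $\langle\xi\rangle^{2s}\le C_s(1+|\xi|^{2s})$, this reduces to the $L^2$ bound $\|w\|_{L^2}\le C\|(-\Delta)^{s/2}w\|_{L^2}$ on $\widetilde H^s(\Omega)$, which I would obtain by a compactness argument: the embedding $\widetilde H^s(\Omega)\hookrightarrow L^2(\R^n)$ is compact because all supports lie in the fixed compact set $\overline\Omega$, while $w\mapsto\|(-\Delta)^{s/2}w\|_{L^2(\R^n)}$ is a genuine norm on $\widetilde H^s(\Omega)$ (if it vanishes, then $|\xi|^s\widehat w=0$ a.e., hence $\widehat w=0$ a.e. and $w=0$). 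A standard bounded-sequence/weak-limit argument on $\widetilde H^s(\Omega)$ then yields the inequality; for $s\in(0,1)$ one may instead simply quote the classical fractional Poincar\'e inequality. Hence $B_q(w,w)\ge c\,\|w\|_{H^s(\R^n)}^2$.

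Lax--Milgram then provides a unique $w\in\widetilde H^s(\Omega)$ with $B_q(w,\phi)=\langle\widetilde F,\phi\rangle$ for all $\phi\in\widetilde H^s(\Omega)$, together with $\|w\|_{H^s(\R^n)}\le C\|\widetilde F\|_{(\widetilde H^s(\Omega))^*}$. Undoing the reduction, $v:=w+f$ is the unique solution of \eqref{eq_linear} in $H^s(\R^n)$, and
\[
\|v\|_{H^s(\R^n)}\le\|w\|_{H^s(\R^n)}+\|f\|_{H^s(\R^n)}\le C\big(\|F\|_{(\widetilde H^s(\Omega))^*}+\|f\|_{H^s(\R^n)}\big),
\]
which is \eqref{linear schrodinger stability estimate}; uniqueness also follows directly, since for $F=f=0$ coercivity forces $\|v\|_{H^s(\R^n)}^2\lesssim B_q(v,v)=0$. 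I expect the only genuine obstacle to be the coercivity step, i.e. the fractional Poincar\'e inequality on $\widetilde H^s(\Omega)$ for exponents $s>1$ (classical for $s\in(0,1)$, and handled by the compactness argument above in general); a secondary point requiring some care is the precise equivalence between the variational formulation with $C^\infty_c(\Omega)$ test functions and \eqref{eq_linear}, which rests on the mapping properties of $(-\Delta)^s$ on the Sobolev scale and the duality relations recalled in the preliminaries.
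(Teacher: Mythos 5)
Your proposal is correct, but note that the paper does not prove Proposition~\ref{prop_wellposedness_linear} at all --- it simply cites \cite[Lemma~5.1]{CMR21}. Your Lax--Milgram argument, reducing to homogeneous exterior data and invoking a fractional Poincar\'e inequality on $\widetilde H^s(\Omega)$ established via the compact embedding $\widetilde H^s(\Omega)\hookrightarrow L^2(\R^n)$, is in fact exactly the strategy used in that reference (where the higher-order Poincar\'e inequality is one of the main results), so you have effectively reconstructed the cited proof rather than diverged from the paper.
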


For $s\in \R^+\setminus \mathbb{Z}$, we will show that the solution $v$ indeed belongs to  $C^{s}$ H\"older space. The case $s\in (0,1)$ was presented in \cite{lai_global_2018}. 
Before proving the $C^s$ regularity for \eqref{eq_linear}, we present the following lemma based on the important regularity results by Grubb \cite{Grubb}, in which more general pseudodifferential operators and functional spaces are considered.
\begin{lemma}\label{lemma_Cs_estimate}
Let $\Omega \subset\R^n$, $n\ge 2$, be a bounded and smooth open domain, and $s\in \R^+\setminus \mathbb{Z}$. Let $g\in L^\infty(\Omega)$, and assume that $w\in H^{s} (\R^n)$ is a solution to the following problem
\[
\begin{cases}
(-\Delta)^s w = g &\hbox{ in } \Omega, \\
w = 0 &\hbox{ on } \R^n\setminus\Omega.
\end{cases}
\]
Then for all $\beta\in (0,2s)$ there exists a constant $C>0$ depending on $\Omega, \beta$ and $s$ such that for any $K\subset\subset \Omega$ it holds
\begin{align}\label{interior w}
\|w\|_{C^\beta(K)} \le C\|g\|_{L^\infty(\Omega)}.
\end{align}

Moreover, there exists a constant $C>0$ depending on $\Omega$ and $s$ such that 
\begin{align}\label{whole space w}
\|w\|_{C^s(\R^n)} \le C\|g\|_{L^\infty(\Omega)}.
\end{align}
\end{lemma}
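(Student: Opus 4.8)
The plan is to combine Grubb's sharp regularity theory for the fractional Laplacian (the "$\mu$-transmission" calculus) with the elementary fact that $L^\infty(\Omega)\hookrightarrow H^{-\epsilon}(\Omega)$ for small $\epsilon>0$ when $\Omega$ is bounded, together with Sobolev embedding. First I would recall from \cite{Grubb} that the Dirichlet realization of $(-\Delta)^s$ on a bounded smooth domain satisfies the a priori estimate: if $(-\Delta)^s w = g$ in $\Omega$ with $w$ supported in $\overline\Omega$, then for a suitable scale of Hörmander $\mu$-transmission spaces one has $w \in \overline{H}^{s+t}(\overline\Omega)$ whenever $g\in \overline{H}^{t-s}(\overline\Omega)$, with the corresponding norm bound, as long as $s+t$ avoids the exceptional values. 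Taking $g\in L^\infty(\Omega)\subset H^{-\epsilon}_{\overline\Omega}$ we would land in a space that, away from $\partial\Omega$, embeds into $C^\beta$ for every $\beta<2s$; this gives the interior estimate \eqref{interior w}. Concretely, on a compact $K\subset\subset\Omega$ one may cut off and use the standard interior elliptic estimate for the pseudodifferential operator $(-\Delta)^s$ (which is classical elliptic of order $2s$ in the interior, the transmission subtlety being only at the boundary): $\|w\|_{H^{2s-\epsilon}(K')} \lesssim \|g\|_{H^{-\epsilon}(\Omega)} + \|w\|_{H^s(\R^n)}$, and then the linear bound from Proposition \ref{prop_wellposedness_linear} (with $q=0$, $f=0$) absorbs $\|w\|_{H^s}\lesssim \|g\|_{(\widetilde H^s(\Omega))^*}\lesssim \|g\|_{L^\infty}$; finally $H^{2s-\epsilon}(K')\hookrightarrow C^\beta(K)$ for $\beta<2s-\epsilon-n/2$, and since $\epsilon$ is at our disposal this covers all $\beta\in(0,2s)$ (one should note $2s$ could be large, so for $2s-n/2\le 0$ one instead bootstraps, applying the interior estimate finitely many times, each time gaining $2s-\epsilon$ derivatives off a slightly smaller compact set, until the Sobolev index exceeds $n/2+\beta$).

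For the global estimate \eqref{whole space w}, the key input is Grubb's boundary regularity: the solution behaves like $d(x)^s$ near $\partial\Omega$, where $d(x)=\dist(x,\partial\Omega)$, and more precisely $w/d^s$ extends to a function that is H\"older (even smoother) up to the boundary. From this one reads off $w\in C^s(\overline\Omega)$ directly — the exponent $s$ (rather than something larger) is exactly the boundary regularity ceiling, coming from the factor $d^s$ — with $\|w\|_{C^s(\overline\Omega)}\lesssim \|g\|_{L^\infty(\Omega)}$; I would cite the relevant theorem of \cite{Grubb} (the mapping property $\overline{H}^{(t-s)(s)}\to \mathcal{E}_s \overline{H}^{t+s}$ or its H\"older-space analog) for this. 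Since $w\equiv 0$ on $\R^n\setminus\Omega$ and $w$ vanishes to order $s$ at $\partial\Omega$ from inside, the two pieces match to give $w\in C^s(\R^n)$ globally: the interior $C^s$ control (a special case of what we just proved, as $s<2s$) together with the up-to-the-boundary $C^s$ control and the trivial exterior bound patch together, using that the $C^s$ seminorm of a function on $\R^n$ is controlled by its seminorm on $\overline\Omega$ plus the seminorm on $\overline{\Omega_e}$ plus the behavior across $\partial\Omega$, the latter being tamed precisely by the $d^s$-vanishing.

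The main obstacle I anticipate is purely expository rather than mathematical: stating Grubb's results in a self-contained and correctly normalized way, since her $\mu$-transmission spaces $\overline{H}^{\mu(t)}$ and the "factored" H\"older spaces $d^s C^{\beta}$ require care, and one must track which exceptional Sobolev/H\"older indices are excluded (these are avoided here because $s\notin\mathbb Z$ and $\beta$ ranges over an open interval). A secondary technical point is that \cite{Grubb} works with $n\ge 1$ but the cleanest embedding statements are phrased for $n\ge 2$ — hence the hypothesis $n\ge 2$ in the lemma — while the case $n=1$, if needed elsewhere, would follow from the same theory with trivial modifications. Everything else (the cutoff arguments, the interior bootstrap, the Sobolev embeddings, and the patching across $\partial\Omega$) is routine once the right citation is in place.
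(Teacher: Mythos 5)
Your global estimate \eqref{whole space w} argument is essentially the same as the paper's: both rely on the $d(x)^s$ boundary behavior coming from Grubb's transmission theory and then match the interior H\"older control against the trivial exterior bound (the paper cites the Ros-Oton--Serra extension argument for this step; your patching argument is the same idea). That part is fine.

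The interior estimate \eqref{interior w}, however, has a genuine gap in the route you propose. You run the argument through $L^2$-based Sobolev spaces: $g\in L^\infty(\Omega)\hookrightarrow H^{-\epsilon}(\Omega)$, interior elliptic regularity gives $w\in H^{2s-\epsilon}(K')$, and then Sobolev embedding gives $C^\beta(K)$. But $H^{2s-\epsilon}(K')\hookrightarrow C^\beta(K)$ only for $\beta<2s-\epsilon-n/2$, so even letting $\epsilon\to 0$ this only yields $\beta<2s-n/2$, \emph{not} $\beta<2s$; your remark that ``since $\epsilon$ is at our disposal this covers all $\beta\in(0,2s)$'' is incorrect, because the $n/2$ loss comes from the Sobolev--H\"older embedding, not from $\epsilon$. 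The proposed bootstrap cannot repair this: for a linear equation with a \emph{fixed} source, interior elliptic regularity puts $w$ exactly $2s$ derivatives above $g$; once $g\in H^{-\epsilon}$ is fixed, $w\in H^{2s-\epsilon}_{\mathrm{loc}}$ is the ceiling, and iterating the estimate does not raise it. (Bootstrapping gains regularity only when the right-hand side itself becomes smoother as $w$ does, which is not the case here.) The paper avoids this entirely by citing Grubb's regularity result directly in the H\"older (equivalently Zygmund/Besov $B^{\cdot}_{\infty,\infty}$) scale, where $L^\infty$ data yields $w\in C^{2s-\varepsilon}(\Omega)$ with no dimensional loss: choosing $t=0$, $a=s$ in \cite[Theorem~4]{Grubb} gives $\|w\|_{C^{2s-\varepsilon}(\Omega)}\le C\|g\|_{L^\infty(\Omega)}$ outright, from which \eqref{interior w} is immediate. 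If you want to keep an ``interior pseudodifferential ellipticity + embedding'' argument rather than invoking Grubb's H\"older-scale theorem, you must run it in $\infty$-based spaces (e.g.\ $g\in B^0_{\infty,\infty}\Rightarrow w\in B^{2s}_{\infty,\infty,\mathrm{loc}}=C^{2s}_{\mathrm{loc}}$ since $2s\notin\mathbb Z$), not in $L^2$-Sobolev spaces.
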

\begin{proof}
Choosing $t=0$ and $a=s$ in \cite[Theorem~4]{Grubb}, we obtain the estimate
$$
\|w\|_{C^{2s-\varepsilon}(\Omega)}
\leq C\|g\|_{L^\infty(\Omega)},
$$
for $0<\varepsilon<2s$, which implies \eqref{interior w}. With this, following a similar argument as in the proof of Lemma~2.9 and Proposition~1.1 in \cite{ros-oton_dirichlet_2014} by extending the interior regularity to the whole space, one deduces that the bounded weak solution $w$ is in $C^s(\R^n)$, and it satisfies the estimate \eqref{whole space w}.
\end{proof}

\begin{prop}\label{prop_Holder_linear}
Let $\Omega\subset\R^n$, $n\geq 2$, be a bounded and smooth open domain. Let $s\in \R^+\setminus \mathbb{Z}$, and assume $\floor{s}>n/2$. Suppose that $f\in C_c^\infty(\Omega_e)$, $F\in L^\infty(\Omega)$, and  
$q\in L^\infty(\Omega)$ such that $q\ge 0$ in $\Omega$.
Let $v\in H^s(\R^n)$ solve \eqref{eq_linear}.
Then $v\in C^{s}(\R^n)$ and it satisfies
\[
\|v\|_{C^{s}(\R^n)}
\le
C \LC  \|F\|_{L^\infty(\Omega)}+ \|f\|_{C_c^\infty(\Omega_e)} \RC,
\]
for a constant $C>0$ independent of $v, F$ and $f$. 
\end{prop}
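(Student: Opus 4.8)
The plan is to reduce to the homogeneous-exterior-data situation covered by Lemma \ref{lemma_Cs_estimate}, after identifying the correct bounded right-hand side. First I would set $w := v - f$. Since $f$ is supported in $\Omega_e$ and $v = f$ there, $\supp w \subset \overline\Omega$, and $w \in H^s(\R^n)$ because both $v$ and $f$ are. Using \eqref{eq_linear} and linearity of $(-\Delta)^s$, one checks that $w$ solves, in the weak sense,
\begin{equation*}
\begin{cases}
(-\Delta)^s w = F - q v - (-\Delta)^s f =: g &\hbox{ in } \Omega,\\
w = 0 &\hbox{ on } \R^n\setminus\Omega.
\end{cases}
\end{equation*}
So the whole matter comes down to showing $g\in L^\infty(\Omega)$ with $\|g\|_{L^\infty(\Omega)}\le C(\|F\|_{L^\infty(\Omega)}+\|f\|_{C_c^\infty(\Omega_e)})$; then \eqref{whole space w} of Lemma \ref{lemma_Cs_estimate} gives $w\in C^s(\R^n)$ with the matching bound, and adding back $f\in C_c^\infty(\Omega_e)\subset C^s(\R^n)$ (whose $C^s(\R^n)$-norm is controlled by $\|f\|_{C_c^\infty(\Omega_e)}$) finishes the proof, since $v = w + f$.

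Next I would estimate the three terms of $g$. The term $F$ lies in $L^\infty(\Omega)$ by hypothesis. For $(-\Delta)^s f$: since $f\in C_c^\infty(\Omega_e)\subset\mathcal S(\R^n)$, writing $(-\Delta)^s f=\mathcal F^{-1}(|\xi|^{2s}\mathcal F f)$ and noting $|\xi|^{2s}\mathcal F f\in L^1(\R^n)$ (rapid decay of $\mathcal F f$ beats the polynomial growth of $|\xi|^{2s}$, while $s>0$ controls the origin), and likewise for $\xi^\sigma|\xi|^{2s}\mathcal F f$ for $|\sigma|$ up to the needed order, one gets that $(-\Delta)^s f$ is bounded (indeed smooth and bounded) with $L^\infty(\R^n)$-norm controlled by a Schwartz seminorm of $f$, hence by $\|f\|_{C_c^\infty(\Omega_e)}$; in particular $(-\Delta)^s f|_\Omega\in L^\infty(\Omega)$. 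The genuinely important term is $qv$: since $q\in L^\infty(\Omega)$, it suffices to bound $\|v\|_{L^\infty(\R^n)}$. Here the hypothesis $\floor{s}>n/2$ enters — it gives $s>\floor{s}>n/2$, so the Sobolev embedding $H^s(\R^n)\hookrightarrow L^\infty(\R^n)$ yields $\|v\|_{L^\infty(\R^n)}\le C\|v\|_{H^s(\R^n)}$. Combining this with Proposition \ref{prop_wellposedness_linear} and the continuous inclusions $L^\infty(\Omega)\hookrightarrow(\widetilde H^s(\Omega))^*$ (from $\widetilde H^s(\Omega)\subset L^2(\Omega)\subset L^1(\Omega)$ for bounded $\Omega$) and $C_c^\infty(\Omega_e)\hookrightarrow H^s(\R^n)$, I get $\|v\|_{H^s(\R^n)}\le C(\|F\|_{L^\infty(\Omega)}+\|f\|_{C_c^\infty(\Omega_e)})$. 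Putting the three pieces together, $\|g\|_{L^\infty(\Omega)}\le\|F\|_{L^\infty(\Omega)}+\|q\|_{L^\infty(\Omega)}\|v\|_{L^\infty(\R^n)}+\|(-\Delta)^s f\|_{L^\infty(\R^n)}\le C(\|F\|_{L^\infty(\Omega)}+\|f\|_{C_c^\infty(\Omega_e)})$, as required.

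Finally, applying Lemma \ref{lemma_Cs_estimate} to $w$ (with $g$ as above) produces $w\in C^s(\R^n)$ and $\|w\|_{C^s(\R^n)}\le C\|g\|_{L^\infty(\Omega)}$, and then $v = w + f$ gives $v\in C^s(\R^n)$ with the stated estimate.

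I expect the main obstacle to be precisely the $L^\infty$ control of $v$: this is not a mere regularity remark but the step where the structural assumption $\floor{s}>n/2$ is indispensable — as noted in the introduction, for $s\in(0,1)$ one would instead invoke the maximum principle, which is unavailable here, so the Sobolev-embedding route (feeding back the a priori $H^s$ bound of Proposition \ref{prop_wellposedness_linear}) is the substitute. A secondary, purely technical point is to make precise in which Fréchet topology on $C_c^\infty(\Omega_e)$ — equivalently, how many derivatives of $f$ on its (fixed) compact support — the quantities $\|f\|_{H^s(\R^n)}$, $\|(-\Delta)^s f\|_{L^\infty(\R^n)}$ and $\|f\|_{C^s(\R^n)}$ are controlled; this is routine and is the reason the hypothesis is phrased with $f\in C_c^\infty(\Omega_e)$ rather than with a finite-order norm.
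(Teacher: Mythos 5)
Your proposal is correct and follows essentially the same route as the paper: subtract the exterior data to get a zero-exterior-data problem for $w=v-f$, show the right-hand side $F-qv-(-\Delta)^s f$ lies in $L^\infty(\Omega)$ (with $\floor{s}>n/2$ entering via Sobolev embedding to bound $\|v\|_{L^\infty}$ from the $H^s$ estimate of Proposition~\ref{prop_wellposedness_linear}), apply Lemma~\ref{lemma_Cs_estimate}, then add $f$ back. The only minor variation is your treatment of $(-\Delta)^s f$ via integrability of $|\xi|^{2s}\mathcal{F}f$, where the paper instead factors $(-\Delta)^s=(-\Delta)^{s-\floor{s}}(-\Delta)^{\floor{s}}$ and uses the $C^2\to L^\infty$ bound for the residual fractional Laplacian of order in $(0,1)$ — both are routine and equivalent in effect.
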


\begin{proof}
Proposition~\ref{prop_wellposedness_linear} ensures that there exists a unique solution $v\in H^s(\R^n)$, $n/2 \leq \floor{s} < s$, which is contained in $L^\infty(\Omega)$ due to the Sobolev embedding theorem. Also by \eqref{linear schrodinger stability estimate} $v$ satisfies 
\begin{equation}\label{Cs estimate 1}
\|v\|_{L^\infty(\Omega)}\le C\left(\|F\|_{L^\infty(\Omega)}+\|f\|_{C^\infty_c (\Omega_e)}\right).
\end{equation}
We now extend $f$ outside of $\Omega_e$ by $0$ and retain the same notation. This gives $\|f\|_{C_c^\infty(\R^n)}\le \|f\|_{C_c^\infty(\Omega_e)}$ and, in particular, 
\[
\|(-\Delta)^sf\|_{L^\infty(\R^n)}
=\| (-\Delta)^{s-\floor{s}}(-\Delta)^{\floor{s}}f\|_{L^\infty(\R^n)}
\le C\|(-\Delta)^{\floor{s}}f\|_{C^2(\R^n)}
\le C\|f\|_{C^\infty_c(\R^n)},
\]
where we used the fact that $\|(-\Delta)^\tau g\|_{L^\infty(\R^n)}\le \|g\|_{C^2(\R^n)}$ for $g\in C_c^\infty(\R^n)$ and $\tau\in (0,1)$.

Let $v_0 := v-f$. Then $v_0\in \widetilde{H}^s(\Omega)$ is a solution to 
\begin{equation}\label{eq_wellpose_pf_v0}
(-\Delta)^s v_0 =F -q(x)v-(-\Delta)^sf \quad \hbox{in} \; \Omega,\qquad v_0=0\quad\hbox{in}\; \Omega_e,
\end{equation}
where the right-hand side $F -q(x)v-(-\Delta)^sf$ is in $L^\infty(\Omega)$ since $v,\, (-\Delta)^s f\in L^\infty(\Omega)$, and moreover $q,\,F\in L^\infty$ by assumption. 
By \eqref{Cs estimate 1} and Lemma~\ref{lemma_Cs_estimate}, 
\[
\|v_0\|_{C^s(\R^n)}\le C (\|F\|_{L^\infty(\Omega)}+\|f\|_{C_c^\infty (\Omega_e)}),
\]
and thus
\[
\|v\|_{C^{s}(\R^n)} 
\le \|v_0\|_{C^{s}(\R^n)}+\|f\|_{C^{s}(\R^n)}
\le C \LC\|F\|_{L^\infty(\Omega)}+\|f\|_{C_c^\infty(\Omega_e)}\RC.
\]
\end{proof}

With the above propositions, we are now ready to show the well-posedness for the nonlinear fractional Schr\"odinger equation, see \cite[Theorem 2.1]{lai_inverse_2022} for similar arguments.

\begin{theorem}[Well-posedness for the nonlinear equation]
\label{thm_wellposdness}
Let $\Omega\subset\R^n$ be a bounded open domain with smooth 
boundary $\p\Omega$. Let $s\in \R^+\setminus \mathbb{Z}$ and $m\in \mathbb{N}$ satisfy $\floor{s}>\max\{m,\,n/2\}$.
Suppose that $q\in L^\infty(\Omega)$ satisfies $q\geq 0$ in $\Omega$, and the coefficients in $\mathbf{P}(u)$ satisfy $a_{\sigma,k}\in C (\overline\Omega)$.  
Then there exists a sufficiently small $\varepsilon_0>0$ such that for any $f\in \mathcal{X}_{\varepsilon_0}(\Omega_e)$,
the Dirichlet problem \eqref{eq_1_IBVP} has a unique solution $u\in H^s(\R^n)\cap C^s(\R^n)$. Moreover, there exists a constant $C>0$, independent of $u$ and $f$, such that  
\[
\|u\|_{C^{s}(\R^n)}+ \|u\|_{H^{s}(\R^n)}\le C\|f\|_{C^\infty_c(\Omega_e)}.
\]
\end{theorem}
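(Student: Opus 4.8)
The plan is to prove Theorem~\ref{thm_wellposdness} by a Banach fixed point (contraction mapping) argument, using the linear estimates of Propositions~\ref{prop_wellposedness_linear} and \ref{prop_Holder_linear} to control the nonlinear term $\mathbf{P}(u)$. First I would set up the solution map: given $w\in H^s(\R^n)\cap C^s(\R^n)$, let $\Phi(w)$ be the unique solution $u\in H^s(\R^n)$ (provided by Proposition~\ref{prop_wellposedness_linear}) of the linear problem
\begin{equation*}
(-\Delta)^s u + q(x)u = -\mathbf{P}(w)\ \text{ in }\Omega,\qquad u=f\ \text{ in }\Omega_e.
\end{equation*}
A fixed point of $\Phi$ is precisely a solution of \eqref{eq_1_IBVP}. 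The crucial structural observation is that $\mathbf{P}(w)=\sum_{k=1}^{K-1}w^k P_k(x,D)w$ involves only derivatives $D^\sigma w$ with $|\sigma|\le m<\floor{s}<s$, so each term is a product of factors that are controlled in $C^0(\overline\Omega)$ by $\|w\|_{C^s(\R^n)}$ (since $C^s\hookrightarrow C^m$ because $s>\floor{s}>m$), and the coefficients $a_{\sigma,k}\in C(\overline\Omega)$ are bounded. Hence $\|\mathbf{P}(w)\|_{L^\infty(\Omega)}\le C\sum_{k=1}^{K-1}\|w\|_{C^s(\R^n)}^{k+1}$, which is the key nonlinear estimate.

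Next I would choose the space and the ball. Work in $X:=H^s(\R^n)\cap C^s(\R^n)$ with norm $\|w\|_X:=\|w\|_{H^s(\R^n)}+\|w\|_{C^s(\R^n)}$, and for $\delta>0$ let $B_\delta:=\{w\in X:\|w\|_X\le\delta\}$, a complete metric space. Given $f\in\mathcal{X}_{\varepsilon_0}(\Omega_e)$, for $w\in B_\delta$ the right-hand side $F:=-\mathbf{P}(w)\in L^\infty(\Omega)$ with $\|F\|_{L^\infty(\Omega)}\le C\sum_{k=1}^{K-1}\delta^{k+1}\le C\delta^2$ once $\delta\le1$. Then Propositions~\ref{prop_wellposedness_linear} and \ref{prop_Holder_linear} give
\begin{equation*}
\|\Phi(w)\|_X\le C\bigl(\|\mathbf{P}(w)\|_{L^\infty(\Omega)}+\|f\|_{C^\infty_c(\Omega_e)}\bigr)\le C_0\bigl(\delta^2+\varepsilon_0\bigr).
\end{equation*}
Choosing $\delta:=2C_0\varepsilon_0$ and then $\varepsilon_0$ small enough that $C_0\delta^2\le C_0\varepsilon_0$ (i.e.\ $\delta\le1$ and $4C_0^2\varepsilon_0\le1$) yields $\|\Phi(w)\|_X\le 2C_0\varepsilon_0=\delta$, so $\Phi:B_\delta\to B_\delta$.

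For the contraction property, take $w_1,w_2\in B_\delta$; then $\Phi(w_1)-\Phi(w_2)$ solves the linear problem with zero exterior data and right-hand side $-(\mathbf{P}(w_1)-\mathbf{P}(w_2))$. The difference $\mathbf{P}(w_1)-\mathbf{P}(w_2)$ can be expanded term by term using $w_1^kP_k w_1-w_2^kP_k w_2 = (w_1^k-w_2^k)P_k w_1 + w_2^k P_k(w_1-w_2)$, and $w_1^k-w_2^k=(w_1-w_2)\sum_{j=0}^{k-1}w_1^jw_2^{k-1-j}$; every factor is estimated in $C^0(\overline\Omega)$ by $\delta$ except one factor of $w_1-w_2$ (or $P_k(w_1-w_2)$) estimated in $C^s$, giving $\|\mathbf{P}(w_1)-\mathbf{P}(w_2)\|_{L^\infty(\Omega)}\le C\delta\,\|w_1-w_2\|_{C^s(\R^n)}\le C\delta\|w_1-w_2\|_X$. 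Applying the linear estimates again, $\|\Phi(w_1)-\Phi(w_2)\|_X\le C_1\delta\|w_1-w_2\|_X$, which is a contraction once $\delta$ (hence $\varepsilon_0$) is further shrunk so that $C_1\delta<1/2$. The Banach fixed point theorem then produces a unique $u\in B_\delta$ with $\Phi(u)=u$; uniqueness in the whole of $X$ (not just in $B_\delta$) follows because any solution with small enough norm must lie in $B_\delta$, or alternatively from the contraction estimate directly. Finally, feeding the fixed point back into the linear estimate with $F=-\mathbf{P}(u)$, $\|\mathbf P(u)\|_{L^\infty(\Omega)}\le C\delta\|u\|_X\le\tfrac12\|u\|_X$ can be absorbed, yielding $\|u\|_{C^s(\R^n)}+\|u\|_{H^s(\R^n)}\le C\|f\|_{C^\infty_c(\Omega_e)}$, the claimed estimate.

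The main obstacle I anticipate is purely a bookkeeping one: making the nonlinear and difference estimates for $\mathbf{P}$ fully rigorous requires that each factor $D^\sigma w$ with $|\sigma|\le m$ be genuinely controlled in $C^0(\overline\Omega)$ by $\|w\|_{C^s(\R^n)}$, which is exactly where the hypothesis $\floor{s}>m$ enters (so that $C^s(\R^n)\hookrightarrow C^{\floor{s}}(\R^n)\hookrightarrow C^m(\overline\Omega)$), and that the product of $C^0$ functions with an $L^\infty$ factor stays in $L^\infty(\Omega)$ — which is immediate. The hypothesis $\floor{s}>n/2$ is needed separately to invoke Proposition~\ref{prop_Holder_linear} (Sobolev embedding $H^s\hookrightarrow L^\infty$). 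No genuinely hard analysis remains beyond these embeddings and the algebra of the multilinear expansion; the delicate point is simply to order the smallness choices ($\delta$ in terms of $\varepsilon_0$, then $\varepsilon_0$ small) consistently so that both self-mapping and contraction hold simultaneously.
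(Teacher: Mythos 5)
Your proposal is correct and follows essentially the same route as the paper's proof: a contraction mapping argument built on the linear $H^s$ and $C^s$ estimates of Propositions~\ref{prop_wellposedness_linear} and \ref{prop_Holder_linear}, with the nonlinearity controlled by $\|\mathbf{P}(w)\|_{L^\infty(\Omega)}\lesssim\sum_k\|w\|_{C^s}^{k+1}$ thanks to $\floor{s}>m$, and the smallness parameters ordered so that self-mapping and contraction hold simultaneously. The only cosmetic differences are that the paper first subtracts the solution $u_0$ of the homogeneous linear problem and runs the contraction for the correction $v=u-u_0$ in the $C^s$-norm with zero exterior data, bootstrapping the $H^s$ bound afterwards, whereas you iterate directly on the full solution in the combined $H^s\cap C^s$ norm; both implementations are equivalent and routine.
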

\begin{proof}
We will use the contraction mapping principle to establish the proof. 
First, we find the solution to the linear equation. By Proposition \ref{prop_wellposedness_linear} and Proposition \ref{prop_Holder_linear}, given any $\|f\|_{C^\infty_c(\Omega_e)}\leq \varepsilon_0$, there exists a unique solution $u_0\in H^s(\R^n)\cap C^s(\R^n)$ of the linear fractional Schr\"odinger equation
\[
\begin{cases}
(-\Delta)^s u_0 + q u_0 = 0 &\hbox{ in } \Omega,\\
u_0 = f &\hbox{ in } \Omega_e,
\end{cases}
\]
and it verifies
\begin{align}\label{eq_wellpose_pf_u0_est}
\|u_0\|_{C^{s}(\R^n)}+\|u_0\|_{H^s(\R^n)} 
\le C \|f\|_{C_c^{\infty}(\Omega_e)}.
\end{align}

In order to prove the existence of a solution to \eqref{eq_1_IBVP}, it suffices to show the existence of the solution $v := u- u_0\in H^s(\R^n)\cap C^{s}(\R^n)$ to
\begin{align}\label{eq_wellpose_pf_v}
\begin{cases}
(-\Delta)^sv + q (x)v= G(v)&\hbox{ in } \Omega,\\
v = 0 & \hbox{ in } \Omega_e, 
\end{cases}
\end{align}
where 
\[
G(v) := -\mathbf{P}(v+u_0)=- \sum^{K-1}_{j=1}(v+u_0)^j P_j(x,D)(v+u_0). 
\]

We shall first establish the $C^s$ estimates of $v$, and only then shall we present the $H^s$ estimate. Given any $\delta\in (0,1)$ to be fixed later, we define the complete metric set
\[
\mathcal{M}_\delta := \{\phi \in C^{s}(\R^n):\, \phi|_{\Omega_e} = 0,\, \|\phi\|_{C^{s}(\R^n)}\le \delta\}.
\]
It is clear that $G:\mathcal{M}_\delta\rightarrow L^\infty(\Omega)$ since $C^{s-m}(\R^n)\subset L^\infty(\Omega)$ for $s\ge m$. We also define $\mathcal{L}_s^{-1}: F\in L^\infty(\Omega)\to w\in H^s(\R^n)\cap C^{s}(\R^n)$ to be the solution operator of the following problem with a source term $F\in L^\infty(\Omega)$:
\begin{equation}\label{eq_wellpose_pf_linear_source}
\begin{cases}
(-\Delta)^{s}w + qw= F&\hbox{ in } \Omega,\\
w = 0 & \hbox{ in } \Omega_e.
\end{cases}
\end{equation}
By Proposition~\ref{prop_Holder_linear}, for any $\phi\in\mathcal{M}_\delta$ we get
$$
\|\mathcal{L}_s^{-1}\circ G(\phi)\|_{C^{s}(\R^n)}\leq C\| G(\phi)\|_{L^\infty(\Omega)}\leq C C' \sum^{K-1}_{j=1}(\delta+\varepsilon_0)^{j+1},
$$
where $C':=\sum^{K-1}_{j=1} \sum_{|\sigma|\leq m}\|a_{\sigma,j}\|_{C(\overline\Omega)}$.
Let $0<\varepsilon_0<C''\delta$ for some constant $C''>0$. We choose $\delta$ sufficiently small so that
\begin{align}\label{contrative estimate}
C C'\sum^{K-1}_{j=1}(\delta+\varepsilon_0)^{j+1} <\delta,
\end{align}
which then leads to $\mathcal{L}_s^{-1}\circ G(\phi) \in \mathcal{M}_\delta$. To apply the contraction mapping principle, it remains to show that the map
\[
\mathcal{A}:= \mathcal{L}_s^{-1}\circ G:\, \mathcal{M}_\delta \to \mathcal{M}_\delta
\]
is strictly contractive. Let us estimate for $\phi_1,\,\phi_2\in \mathcal{M}_\delta$
\begin{align*}
\|\mathcal{A}(\phi_1)-\mathcal{A}(\phi_2)\|_{C^{s}(\R^n)} 
&\le C\|G(\phi_1)-G(\phi_2)\|_{L^\infty(\Omega)} \\
&\le C\sum_{j = 1}^{K-1} \Big(\|(\phi_1+u_0)^jP_j(x,D)(\phi_1-\phi_2)\|_{C^{s}(\R^n)}\\
&\quad + \|\left((\phi_1+u_0)^j-(\phi_2+u_0)^j\right)P_j(x,D)(\phi_2+u_0)\|\Big)\\
&\leq CC'K\sum^{K-1}_{j=1}(\delta+\varepsilon_0)^{j} \|\phi_1-\phi_2\|_{C^{s}(\R^n)}. 
\end{align*} 
Then $\mathcal{A}$ is contractive if we further require $\delta$ sufficiently small so that $CC'K\sum^{K-1}_{j=1}(\delta+\varepsilon_0)^{j} <1$.
Based on the contraction mapping principle, there exists a fixed point $v\in \mathcal{M}_\delta$ such that $\mathcal{A}(v)=v$. Indeed, this $v$ then solves \eqref{eq_wellpose_pf_v} and satisfies
\begin{align*}
\|v\|_{C^{s}(\R^n)}&= \|\mathcal{L}_s^{-1}\circ G(v)\|_{C^{s}(\R^n)}\leq C\|G(v)\|_{L^\infty(\Omega)}\\
&\leq CC' \sum^{K-1}_{j=1}(\delta+\varepsilon_0)^{j}  \LC \|v\|_{C^{s}(\R^n)} +\|u_0\|_{C^{s}(\R^n)}\RC\\
&\leq C C'\sum^{K-1}_{j=1}(\delta+\varepsilon_0)^{j} \LC \|v\|_{C^{s}(\R^n)} +\|f\|_{C^\infty_c(\Omega_e)}\RC.
\end{align*} 
As the coefficient in front of $\|v\|_{C^{s}(\R^n)}$ on the right is less than $1$, this term can be absorbed by the left-hand side. Hence, we obtain
$$
\|v\|_{C^{s}(\R^n)}\leq C\|f\|_{C^\infty_c(\Omega_e)}.
$$

Now we estimate the $H^s$ norm of $v$. Applying Proposition~\ref{prop_wellposedness_linear} to \eqref{eq_wellpose_pf_v}, we obtain 
\[
\|v\|_{H^s(\R^n)}\le C\|G(v)\|_{(\widetilde{H}^s(\Omega))^*} = C\|G(v)\|_{H^{-s}(\Omega)}\le CC' \sum^{K-1}_{j=1}(\delta+\varepsilon_0)^{j}\LC \|v\|_{H^s(\R^n)} +\|u_0\|_{H^s(\R^n)} \RC,
\]
where we applied the boundedness of $v,\,u_0\in C^{s}(\R^n)$. The first term on the right-hand side can be absorbed by the left if $\delta$ is sufficiently small. Hence, as $u=u_0+v$, the above two estimates together with \eqref{eq_wellpose_pf_u0_est} yield the desired stability estimate of $u$ in $H^s(\R^n)$.
This completes the proof.
\end{proof}

\subsection{The Dirichlet-to-Neumann map}
We have showed in Theorem~\ref{thm_wellposdness} that there exists a unique solution $u\in H^s(\R^n)\cap C^s(\R^n)$ to the nonlinear fractional Schr\"odinger equation \eqref{eq_1_IBVP}. Define the bilinear form $B_{\mathbf{P}}: H^s(\R^n)\times H^s(\R^n)\to \R$ by
\[
B_{\mathbf{P}}(u,v) : = \int_{\R^n}(-\Delta)^{\frac{s}{2}}u(-\Delta)^{\frac{s}{2}}v \dx + \int_\Omega quv\,dx+ \int_\Omega \mathbf{P}(u)v\dx.
\]

We can now define the DN-map $\Lambda_{\mathbf{P}}$: 
\begin{defn}
The DN-map $\Lambda_{\mathbf P}:\mathcal{X}_{\varepsilon_0}(\Omega_e)\cap X \to X^*$ is the bounded linear operator given by 
\begin{equation}\label{eq_def_DN}
\LA \Lambda_\mathbf{P}[f],[v] \RA = B_\mathbf{P}(u_f,v)  \quad \hbox{ for all } f\in \mathcal{X}_{\varepsilon_0}(\Omega_e),\,\, v\in H^s(\R^n),
\end{equation}
where $u_f\in H^s(\R^n)\cap C^{s}(\R^n)$ is the unique solution of 
$(-\Delta)^su + qu+\mathbf{P}(u) = 0$ 
with $u-f\in \widetilde{H}^s(\Omega)$.
Here the quotient space $X : = H^{s}(\R^n)/\widetilde{H}^s(\Omega)$ is equipped with the norm 
\[
\|[f]\|_X: = \inf_{\phi\in \widetilde{H}^s(\Omega)} \|f+\phi\|_{H^s(\R^n)},\quad f\in H^s(\R^n),
\]
and $X^*$ is the dual space of $X$.
\end{defn}

Because the DN-map acts on a quotient space, we have to make sure that it is well-defined as indicated above. We also need to show that it is indeed a bounded linear operator. In what follows, we will use $f$ rather than $[f]$ in order to simplify the notation.
\begin{prop}
Let $\Omega\subset\R^n$ be a bounded open domain with smooth 
boundary $\p\Omega$. Let $s\in \R^+\setminus \mathbb{Z}$ and $m\in \mathbb{N}$ satisfy $\floor{s}>\max\{m,\,n/2\}$.
Suppose that $q\in L^\infty(\Omega)$ satisfies $q\geq 0$ in $\Omega$, and the coefficients in $\mathbf{P}(u)$ satisfy $a_{\sigma,k}\in C (\overline\Omega)$.  
Then the DN-map $\Lambda_\mathbf{P}$ is well-defined and bounded.
\end{prop}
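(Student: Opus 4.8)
The plan is to verify the two assertions in the proposition separately: first that the value $\langle\Lambda_{\mathbf P}[f],[v]\rangle := B_{\mathbf P}(u_f,v)$ is independent of the representative of $[f]$ and of $[v]$, so that $\Lambda_{\mathbf P}$ is well-defined on the quotient space $X$; and second that the resulting functional $v\mapsto\langle\Lambda_{\mathbf P}[f],[v]\rangle$ is bounded, uniformly for $f$ in the admissible ball, so that $\Lambda_{\mathbf P}[f]\in X^*$.

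For the well-definedness, note first that changing the representative of $[f]$ means replacing $f$ by $f+\psi$ with $\psi\in\widetilde H^s(\Omega)$; but then $u_{f+\psi}$ and $u_f$ solve the same Dirichlet problem \eqref{eq_1_IBVP} with the same exterior data modulo $\widetilde H^s(\Omega)$, and by the uniqueness part of Theorem~\ref{thm_wellposdness} we get $u_{f+\psi}=u_f$, so the left argument of $B_{\mathbf P}$ is unchanged. The more substantive point is independence of the representative of $[v]$: if $v_1-v_2=\varphi\in\widetilde H^s(\Omega)$ we must show $B_{\mathbf P}(u_f,\varphi)=0$. This is precisely the weak formulation of the PDE: for $\varphi\in C^\infty_c(\Omega)$ one has $\int_{\R^n}(-\Delta)^{s/2}u_f(-\Delta)^{s/2}\varphi\dx=\langle(-\Delta)^su_f,\varphi\rangle$, so $B_{\mathbf P}(u_f,\varphi)=\langle(-\Delta)^su_f+qu_f+\mathbf P(u_f),\varphi\rangle=0$ because $u_f$ solves \eqref{eq_1_IBVP} in $\Omega$; the identity extends to all $\varphi\in\widetilde H^s(\Omega)$ by density once we know each of the three terms defines a continuous linear functional on $\widetilde H^s(\Omega)$, which is part of the boundedness estimate below. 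Here one should check that $\mathbf P(u_f)\in(\widetilde H^s(\Omega))^*$: since $u_f\in C^s(\R^n)$ with $s>m$, each factor $u_f^j$ and each $P_j(x,D)u_f$ lies in $L^\infty(\Omega)$, hence $\mathbf P(u_f)\in L^\infty(\Omega)\hookrightarrow(\widetilde H^s(\Omega))^*$.

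For boundedness, I would estimate $B_{\mathbf P}(u_f,v)$ by the three terms. The first term is bounded by $\|(-\Delta)^{s/2}u_f\|_{L^2(\R^n)}\|(-\Delta)^{s/2}v\|_{L^2(\R^n)}\le C\|u_f\|_{H^s(\R^n)}\|v\|_{H^s(\R^n)}$. The second is bounded by $\|q\|_{L^\infty(\Omega)}\|u_f\|_{L^2(\Omega)}\|v\|_{L^2(\Omega)}\le C\|u_f\|_{H^s(\R^n)}\|v\|_{H^s(\R^n)}$. For the third, $|\int_\Omega\mathbf P(u_f)v\dx|\le\|\mathbf P(u_f)\|_{L^2(\Omega)}\|v\|_{L^2(\Omega)}\le C\,C'\sum_{j=1}^{K-1}\|u_f\|_{C^s(\R^n)}^{j+1}\|v\|_{H^s(\R^n)}$, using $\|u_f^jP_j(x,D)u_f\|_{L^\infty(\Omega)}\le C'\|u_f\|_{C^s(\R^n)}^{j+1}$ since $s>m$. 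Combining, $|B_{\mathbf P}(u_f,v)|\le C(\|u_f\|_{H^s(\R^n)}+\sum_j\|u_f\|_{C^s(\R^n)}^{j+1})\|v\|_{H^s(\R^n)}$. Finally, every term on the right vanishes when $v\in\widetilde H^s(\Omega)$ (by the weak-formulation argument above), so $v\mapsto B_{\mathbf P}(u_f,v)$ descends to a bounded functional on $X=H^s(\R^n)/\widetilde H^s(\Omega)$ with $\|\Lambda_{\mathbf P}[f]\|_{X^*}\le C(\|u_f\|_{H^s(\R^n)}+\sum_j\|u_f\|_{C^s(\R^n)}^{j+1})$, which by the well-posedness bound $\|u_f\|_{H^s(\R^n)}+\|u_f\|_{C^s(\R^n)}\le C\|f\|_{C^\infty_c(\Omega_e)}$ is finite for $f\in\mathcal X_{\varepsilon_0}(\Omega_e)$.

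I expect the only genuinely delicate point to be justifying the integration-by-parts/weak-formulation identity $\int_{\R^n}(-\Delta)^{s/2}u_f(-\Delta)^{s/2}\varphi\dx=\langle(-\Delta)^su_f,\varphi\rangle_{\Omega}$ for $s>1$ and its extension to $\varphi\in\widetilde H^s(\Omega)$ by density, together with the claim that $\mathbf P(u_f)$ pairs continuously against $\widetilde H^s(\Omega)$; both are standard once the regularity $u_f\in H^s(\R^n)\cap C^s(\R^n)$ from Theorem~\ref{thm_wellposdness} is invoked, and can be cited from \cite{mclean_strongly_2000,CMR21}. The linearity of $\Lambda_{\mathbf P}$ in $f$ is \emph{not} claimed here in the nonlinear sense — $\Lambda_{\mathbf P}$ is merely a (nonlinear) map $\mathcal X_{\varepsilon_0}(\Omega_e)\cap X\to X^*$, and the word ``linear'' in the definition refers to the action in the second slot $[v]$, which is immediate from the bilinearity of $B_{\mathbf P}$ in its second argument; I would remark on this to avoid confusion.
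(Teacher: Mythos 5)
Your proof is correct and follows essentially the same route as the paper: well-definedness in the first slot via the uniqueness part of Theorem~\ref{thm_wellposdness}, well-definedness in the second slot via the weak formulation $B_{\mathbf P}(u_f,\psi)=0$ for $\psi\in\widetilde H^s(\Omega)$, and boundedness by estimating the three terms of $B_{\mathbf P}$ separately using $u_f\in H^s(\R^n)\cap C^s(\R^n)$ (with $\floor{s}>m$ to control $D^\sigma u_f$ in $L^\infty$) and the stability bound $\|u_f\|_{H^s}+\|u_f\|_{C^s}\le C\|f\|_{C^\infty_c(\Omega_e)}$. Your added remarks on the density step, on $\mathbf P(u_f)\in(\widetilde H^s(\Omega))^*$, and on the (non)linearity in $f$ are sensible clarifications but do not alter the argument's structure.
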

\begin{proof}
We first show that $\Lambda_\mathbf{P}$ only depends on the equivalence classes. Let $\phi,\,\psi\in \widetilde{H}^s(\Omega)$. Since $u_f$ and $u_{f+\phi}$ both solve problem \eqref{eq_1_IBVP} with the same exterior data, the well-posedness of \eqref{eq_1_IBVP} implies $u_f = u_{f+\phi}$.
By the linearity of $B_\mathbf{P}$ in the second component, we have
\[
B_\mathbf{P}(u_{f+\phi},v+\psi) = B_\mathbf{P}(u_f, v+\psi) = B_\mathbf{P}(u_f, v)+ B_\mathbf{P}(u_f, \psi).
\]
Using the fact that $\psi = 0$ in $\Omega_e$ and $u_f$ solves \eqref{eq_1_IBVP}, we get $B_\mathbf{P}(u_f,\psi) = 0$. This proves that $\LA \Lambda_\mathbf{P}(f+\phi), v+\psi  \RA =\LA \Lambda_\mathbf{P}f,v\RA $, and thus $\Lambda_\mathbf{P}$ is well-defined.

To show the boundedness, we note that $u_f\in H^s(\R^n)\cap C^s(\R^n)$ with $\floor{s}>\max\{m,\,n/2\}$, and the coefficients $q$ and $a_{\sigma,k}$ are bounded. Therefore
\begin{align*}
|B_\mathbf{P}(u_f, v)| 
&\le C \|u_f\|_{H^s(\R^n)}\|v\|_{H^s(\R^n)} + C\Big(\|q\|_{L^\infty(\Omega)}  \\
&\quad +\sum^{K-1}_{k=1}\sum_{|\sigma|\le m}\|a_{\sigma,k}\|_{C(\overline\Omega)}\|D^\sigma u_f\|_{L^\infty(\Omega)}\|u_f\|^{k-1}_{L^\infty(\Omega)}\Big)\|u_f\|_{L^2(\R^n)}\|v\|_{L^2(\R^n)}\\
&\le C(1 +\varepsilon_0^2+\ldots+\varepsilon_0^{K-1}) \|u_f\|_{H^s(\R^n)}\|v\|_{H^s(\R^n)}\\
&\leq C\|f\|_{C^\infty_c(\Omega_e)}\|v\|_{H^s(\R^n)},
\end{align*}
where in the last two inequalities, we used $\|u_f\|_{H^s(\R^n)}\leq C\|f\|_{C^\infty_c(\Omega_e)}$.
\end{proof}

\section{Linearization}
In this section we will show how to linearize the problem and the relative DN-map. Recall that 
\[
\mathbf{P}(u) =uP_1(x,D)u + u^2 P_2(x,D)u+\ldots+u^{K-1}P_{K-1}(x,D)u, 
\]
where $P_k(x,D) = \sum_{|\sigma|\leq m}a_{\sigma,k }  (x)D^\sigma$ for $k = 1,\ldots,K-1.$
Let $q\in C(\overline\Omega)$ be such that $q\ge 0$ in $\Omega$. 

\subsection{Linearization}\label{sec_linearization}
For $f=(f_1,\ldots,f_K)$ with $f_\ell\in C^\infty_c(\Omega_e)$, $\|f_\ell\|_{C_c^\infty(\Omega_e)}\le 1/K$, Theorem \ref{thm_wellposdness} yields that, for sufficiently small $\varepsilon = (\varepsilon_1,\ldots,\varepsilon_K)$  with $|\varepsilon|<\varepsilon_0$, there exists a unique solution $u_\varepsilon \in H^s(\R^n)\cap C^s(\R^n)$ of the following problem 
\begin{equation}\label{eq_3_linearization}
\begin{cases}
[(-\Delta)^s + q(x)]u_\varepsilon =- \mathbf{P}(u_\varepsilon) &\hbox{ in } \Omega,\\
u_\varepsilon = \sum_{\ell=1}^K \varepsilon_\ell f_\ell=:\varepsilon\cdot f &\hbox{ in } \Omega_e.
\end{cases}
\end{equation}
Moreover, we have 
\begin{align}\label{eq_3_linear_u_est}
\|u_\varepsilon\|_{H^{s}(\R^n)}+
\|u_\varepsilon\|_{C^{s}(\R^n)}\le C\|\varepsilon\cdot f\|_{C_c^\infty(\Omega_e)}\le C|\varepsilon|\sum_{\ell=1}^K\|f_\ell\|_{C_c^\infty(\Omega_e)}\le C|\varepsilon|,
\end{align}
where the constant $C > 0$ is independent of $u_\varepsilon$ and $f_\ell$. We will expand the solution $u_\varepsilon$ in terms of the small parameter $\varepsilon$. Let $\alpha=(\ell_1,\ldots,\ell_K)\in \mathbb{N}^K_0 $ be a $K$-dimensional multi-index of nonnegative integers. Combining Propositions \ref{prop_wellposedness_linear} and \ref{prop_Holder_linear}, we know that there exists a unique solution $w_{\alpha}\in H^s(\R^n)\cap C^{s}(\R^n)$ to the linear fractional Schr\"odinger equation 
\begin{equation}
\label{eq_3_linear_w_3}
\begin{cases}
[(-\Delta)^s   + q(x)]w_{\alpha} =- \mathcal{T}_{\alpha} &\hbox{ in } \Omega,\\
w_{\alpha} =  D^\alpha_\varepsilon (\varepsilon\cdot f)|_{\varepsilon=0} &\hbox{ in } \Omega_e,
\end{cases}
\end{equation}
where the inhomogeneous term is 
\begin{equation}\label{eq_3_def_T}
\begin{aligned}
\mathcal{T}_{\alpha} :=
\sum^{|\alpha|-1}_{\ell=1} \sum_{\substack{ \beta_1,\ldots, \beta_\ell\in \mathbb{N}^K_0\setminus \{0,\cdots,0\} : \\ \beta:= \sum_{j=1}^\ell\beta_j < \alpha }}\binom{\alpha}{\beta} \binom{\beta}{\beta_1,\cdots, \beta_\ell}\bigg( \prod_{j=1}^\ell w_{\beta_j } \bigg)    P_\ell(x,D)w_{\alpha-\beta}.
\end{aligned}    
\end{equation}
Note here $\mathcal{T}_{\alpha} = 0$ when $|\alpha| = 0,\, 1$.
Furthermore, the solution satisfies the stability estimate 
\begin{equation}\label{eq_3_linear_w3_est}
\begin{aligned}
\|w_\alpha\|_{C^{s}(\R^n)}+ \|w_\alpha\|_{H^{s}(\R^n)}
\le 
\begin{cases}
C\|D^\alpha_\varepsilon (\varepsilon\cdot f)|_{\varepsilon=0}\|_{C^\infty_c(\Omega_e)}&|\alpha|=1,\\
C\|\mathcal{T}_\alpha\|_{L^\infty(\Omega)},&|\alpha|\geq 2.
\end{cases}
\end{aligned}
\end{equation}
Here we denote
$$
\binom{\alpha}{\beta} = \frac{\alpha !}{\beta !(\alpha-\beta)!} \quad \hbox{and} \quad 
\binom{\beta}{\beta_1, \cdots, \beta_k}= \frac{\beta!}{\beta_1! \cdots\beta_k!}.
$$
Observe that the definition of $\mathcal T_\alpha$ only requires knowledge of those $w_\beta$ with $\beta<\alpha$, and that in turn $w_\alpha$ can be computed from $\mathcal T_\alpha$ as the solution of \eqref{eq_3_linear_w_3}. Thus the definitions of $\mathcal T_\alpha$ and $w_\alpha$ are not circular.

Let us explicitly write out the cases $|\alpha|\leq 2$. We first note that $w_{(0,\ldots,0)}=0$ as $w_{(0,\ldots,0)}$ solves the problem \eqref{eq_3_linear_w_3} with zero source and exterior value. 
Also, for $|\alpha|=1$, we have $\alpha = e_\ell:=(0,\ldots, 0,1,0,\ldots,0)$, whose $\ell\,th$-component is $1$ for some $\ell\in \{1,\ldots, K\}$. Then 
$w_{\alpha} \equiv v_\ell\in H^s(\R^n)\cap C^{s}(\R^n)$ satisfies
\begin{equation}\label{eq_3_linear_1}
\begin{cases}
[(-\Delta)^s + q(x)]v_\ell = 0 &\hbox{ in } \Omega,\\
v_\ell = f_\ell &\hbox{ in } \Omega_e,
\end{cases}
\end{equation}
and 
\begin{align}\label{eq_3_linear_v_est}
\|v_\ell\|_{C^{s}(\R^n)}+ \|v_\ell\|_{H^{s}(\R^n)}\le C\|f_\ell\|_{C_c^\infty(\Omega_e)},
\end{align}
where we used Proposition \ref{prop_wellposedness_linear}, Proposition \ref{prop_Holder_linear} and the fact $\|f\|_{H^s(\R^n)}\le C\|f\|_{C^\infty_c(\Omega_e)}$.

Moreover, for $|\alpha|=2$, that is, $\alpha=e_{\ell_1}+e_{\ell_2}$, for $\ell_j\in \{1,\ldots, K\}$,
$w_{\alpha} \in H^s(\R^n)\cap C^{s}(\R^n)$ is the unique solution to the problem
\begin{equation}\label{eq_3_linear_w}
\begin{cases}
[(-\Delta)^s + q(x)]w_{\alpha}=-\mathcal{T}_{\alpha} 
&\hbox{ in } \Omega,\\
w_{\alpha}= 0 &\hbox{ in } \Omega_e,
\end{cases}
\end{equation}
and also satisfies the estimate (note $\mathcal{T}_{e_{\ell_1}+e_{\ell_2}} = v_{\ell_1}P_1(x,D)v_{\ell_2}+ v_{\ell_2}P_1(x,D)v_{\ell_1}$),
\begin{equation}\label{eq_3_linear_w2_est}
\begin{aligned}
\|w_{\alpha}\|_{C^{s}(\R^n)}+ \|w_{\alpha}\|_{H^{s}(\R^n)}
&\le C\|v_{\ell_1}P_1(x,D)v_{\ell_2}+ v_{\ell_2}P_1(x,D)v_{\ell_1}\|_{L^\infty(\Omega)}.
\end{aligned}
\end{equation}

In the Appendix, we prove that the following $\varepsilon$-expansion of $u_\varepsilon$ holds:

\begin{prop}\label{prop_linearization}Suppose that all the assumptions in Theorem \ref{thm_wellposdness} hold. 
Let $f=(f_1,\ldots,f_K)$, where $f_\ell\in C^\infty_c(\Omega_e)$ with $\|f_\ell\|_{C_c^\infty(\Omega_e)}\le 1/K$,
and $\varepsilon = (\varepsilon_1,\ldots,\varepsilon_K)$ with $|\varepsilon|<\varepsilon_0$ such that 
$$\|\varepsilon\cdot f\|_{C^\infty_c(\Omega_e)}<\varepsilon_0.$$
Then there exists a unique solution $u_\varepsilon$ to the problem~\eqref{eq_3_linearization} and its expansion in terms of the small parameter $\varepsilon$ has the following form 
\begin{equation}\label{eq_3_linear_u_expansion}
u_\varepsilon =  \sum_{\ell=1}^K\varepsilon_\ell v_\ell 
+ 
\sum_{|\alpha|=2}\frac{\varepsilon^\alpha w_\alpha}{\alpha!}
+ 
\sum_{|\alpha|=3}\frac{\varepsilon^\alpha w_\alpha}{\alpha!}+ \ldots+  \sum_{|\alpha|=K}\frac{\varepsilon^\alpha w_\alpha}{\alpha!}+\mathcal{R}_\varepsilon.
\end{equation}
Here the remainder term $\mathcal{R}_\varepsilon$ solves the Dirichlet problem 
\begin{align}\label{EQU_R}
\begin{cases}
(-\Delta)^s \mathcal{R}_\varepsilon + q(x)\mathcal{R}_\varepsilon =-\widetilde{\mathcal{T}}_{K} &\hbox{ in } \Omega,\\
\mathcal{R}_\varepsilon = 0 &\hbox{ in } \Omega_e,
\end{cases}
\end{align}
with the inhomogeneous term  
\begin{align*} 
\begin{aligned}
&\widetilde{\mathcal{T}}_{K} := \mathbf{P} (u_\varepsilon) -  \sum^K_{j=1} \sum_{|\alpha|=j}\frac{\varepsilon^\alpha \mathcal{T}_\alpha}{\alpha!}. 
\end{aligned}
\end{align*}
Moreover, we have 
\begin{align}\label{EST_R}
\|\mathcal{R}_\varepsilon\|_{C^{s}(\R^n)}+\|\mathcal{R}_\varepsilon\|_{H^{s}(\R^n)}\le  C\|\varepsilon\cdot f\|_{C^\infty_c(\Omega_e)}^{K+1}.
\end{align}
\end{prop}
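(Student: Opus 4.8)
The plan is to verify the expansion \eqref{eq_3_linear_u_expansion} by a Taylor-type argument in the parameter $\varepsilon$, then identify the remainder and estimate it via the linear theory. First I would observe that $u_\varepsilon$ depends smoothly on $\varepsilon$ in a neighbourhood of $0$: this follows from the implicit function theorem applied to the map $(\varepsilon,v)\mapsto v - \mathcal L_s^{-1}(-\mathbf P(v+\varepsilon\cdot f))$ on $\mathcal M_\delta$, using that $\mathbf P$ is polynomial in $u$ and its derivatives of order $\le m < s$ and that $\mathcal L_s^{-1}$ is bounded $L^\infty(\Omega)\to H^s(\R^n)\cap C^s(\R^n)$ by Propositions \ref{prop_wellposedness_linear} and \ref{prop_Holder_linear}. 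Granting smoothness, I would set $w_\alpha := D_\varepsilon^\alpha u_\varepsilon|_{\varepsilon=0}$ for each multi-index $\alpha\in\mathbb N_0^K$, and show that this $w_\alpha$ satisfies \eqref{eq_3_linear_w_3}. Differentiating the PDE $[(-\Delta)^s+q]u_\varepsilon = -\mathbf P(u_\varepsilon)$ with $D_\varepsilon^\alpha$ and evaluating at $\varepsilon=0$, the left side gives $[(-\Delta)^s+q]w_\alpha$ (the operator is $\varepsilon$-independent and linear), while the right side, by the multivariate Faà di Bruno / general Leibniz rule applied to the monomials $u^\ell P_\ell(x,D)u$, produces exactly $-\mathcal T_\alpha$ as written in \eqref{eq_3_def_T}, with the correct multinomial coefficients $\binom{\alpha}{\beta}\binom{\beta}{\beta_1,\ldots,\beta_\ell}$; the constraint $\beta_j\ne 0$ and $\beta<\alpha$ comes from the fact that only genuine derivatives survive and that one factor $w_{\alpha-\beta}$ must be differentiated from the $P_\ell(x,D)u$ slot. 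The boundary condition $w_\alpha = D_\varepsilon^\alpha(\varepsilon\cdot f)|_{\varepsilon=0}$ is immediate since $u_\varepsilon = \varepsilon\cdot f$ in $\Omega_e$. I would relegate the precise bookkeeping of this combinatorial identity to the Appendix (as the statement announces) and just indicate it here.

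Next I would define the remainder by $\mathcal R_\varepsilon := u_\varepsilon - \sum_{j=1}^K\sum_{|\alpha|=j}\frac{\varepsilon^\alpha w_\alpha}{\alpha!}$. Since $w_{(0,\ldots,0)}=0$, this is the $K$-th order Taylor remainder of the smooth map $\varepsilon\mapsto u_\varepsilon$ at $0$. Subtracting the equations: $[(-\Delta)^s+q]\mathcal R_\varepsilon = -\mathbf P(u_\varepsilon) + \sum_{j=1}^K\sum_{|\alpha|=j}\frac{\varepsilon^\alpha}{\alpha!}[(-\Delta)^s+q]w_\alpha = -\mathbf P(u_\varepsilon) + \sum_{j=1}^K\sum_{|\alpha|=j}\frac{\varepsilon^\alpha}{\alpha!}(-\mathcal T_\alpha) = -\widetilde{\mathcal T}_K$, and $\mathcal R_\varepsilon = 0$ in $\Omega_e$ because all the $w_\alpha$ with $|\alpha|\ge 2$ vanish there and $\sum_\ell\varepsilon_\ell v_\ell|_{\Omega_e} = \varepsilon\cdot f = u_\varepsilon|_{\Omega_e}$. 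This establishes \eqref{EQU_R}.

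For the estimate \eqref{EST_R}, I would apply Proposition \ref{prop_Holder_linear} and Proposition \ref{prop_wellposedness_linear} to \eqref{EQU_R} to get $\|\mathcal R_\varepsilon\|_{C^s(\R^n)} + \|\mathcal R_\varepsilon\|_{H^s(\R^n)} \le C\|\widetilde{\mathcal T}_K\|_{L^\infty(\Omega)}$, so everything reduces to showing $\|\widetilde{\mathcal T}_K\|_{L^\infty(\Omega)} \le C\|\varepsilon\cdot f\|_{C^\infty_c(\Omega_e)}^{K+1}$. Here $\widetilde{\mathcal T}_K = \mathbf P(u_\varepsilon) - \sum_{j=1}^K\sum_{|\alpha|=j}\frac{\varepsilon^\alpha \mathcal T_\alpha}{\alpha!}$ is precisely what is left of $\mathbf P(u_\varepsilon)$ after removing all $\varepsilon$-monomials of degree $\le K$ in its expansion: substituting $u_\varepsilon = \sum_{|\alpha|\le K}\frac{\varepsilon^\alpha w_\alpha}{\alpha!} + \mathcal R_\varepsilon$ into the polynomial $\mathbf P$, every surviving term in $\widetilde{\mathcal T}_K$ either contains a factor $\mathcal R_\varepsilon$ (bounded by $C|\varepsilon|^{K+1}$ from the start of the induction on $K$, or bootstrapped) or is a product of $w_\alpha$'s whose total $\varepsilon$-degree is $\ge K+1$; in both cases one controls the $L^\infty(\Omega)$ norm using $\|D^\sigma w_\alpha\|_{L^\infty(\Omega)}\le C$ (from $C^s$ with $s>m$) together with \eqref{eq_3_linear_u_est} and $\|\varepsilon\cdot f\|_{C^\infty_c(\Omega_e)}\le C|\varepsilon|$. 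The one subtlety — and the step I expect to be the main obstacle — is handling the $\mathcal R_\varepsilon$-dependent terms cleanly: they appear on both sides, so I would first derive the crude bound $\|\mathcal R_\varepsilon\|_{C^s(\R^n)}\le C\|\varepsilon\cdot f\|^2$ (it is a remainder past order $1$, and $u_\varepsilon$ itself is $O(|\varepsilon|)$), and then iterate: plugging the order-$j$ remainder bound back into $\widetilde{\mathcal T}_K$ upgrades it to order $j+1$, in $K$ steps reaching $K+1$. Alternatively, and more economically, one writes $\widetilde{\mathcal T}_K$ as an integral Taylor remainder of $\varepsilon\mapsto\mathbf P(u_\varepsilon)$ and bounds the $(K+1)$-st $\varepsilon$-derivative directly using smoothness of $u_\varepsilon$ and the chain rule; either way the polynomial structure of $\mathbf P$ and the uniform bounds on the $w_\alpha$ make the constants harmless. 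This yields \eqref{EST_R} and completes the proof.
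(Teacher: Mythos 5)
Your plan is broadly sound but takes a genuinely different route from the paper's, and it is lightest precisely where the paper does the most work. The paper never invokes smoothness of $\varepsilon\mapsto u_\varepsilon$ or the implicit function theorem: it \emph{defines} $w_\alpha$ as the solution of \eqref{eq_3_linear_w_3} with source $-\mathcal T_\alpha$ (where $\mathcal T_\alpha$ is defined recursively by \eqref{eq_3_def_T}), then introduces the intermediate quantities $V_k=\sum_{|\alpha|=k}\varepsilon^\alpha w_\alpha/\alpha!$ and $U_k=U_{k-1}-V_k$ (so $U_0=u_\varepsilon$ and $U_K=\mathcal R_\varepsilon$), and proves the estimate by a double induction: Lemma \ref{prop_est_V&T} gives $\|\mathcal T_k\|_{L^\infty}=\mathcal O(\|\varepsilon\cdot f\|^k)$, and Lemma \ref{prop_est_U&Tilde T} shows $\|U_j\|=\mathcal O(\|\varepsilon\cdot f\|^{j+1})$ inductively in $j$. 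The crucial step in the latter is a telescoping identity $\sum_{b=\sigma_\ell}^{J}V_{b+1-\sigma_\ell}=U_0-U_{J+1-\sigma_\ell}$, which is what lets each inner sum be replaced by $U_0$ up to an error of the right order; this is the structural observation that makes the paper's induction close. The identity \eqref{eq:derivatives-of-uepsilon}, i.e.\ $w_\alpha=D_\varepsilon^\alpha u_\varepsilon|_{\varepsilon=0}$, is then deduced \emph{from} the expansion, not taken as a starting point. You invert this logic: you first establish smoothness via the implicit function theorem, define $w_\alpha$ as the derivative, and then argue the PDE for $w_\alpha$ by Fa\`a di Bruno. This is logically viable, but (i) your contraction map should use the linear solution $u_0^\varepsilon=\sum_\ell\varepsilon_\ell v_\ell$ rather than $\varepsilon\cdot f$ itself — as written, $v+\varepsilon\cdot f$ equals $v$ inside $\Omega$, so the nonlinearity never sees $\varepsilon$; and (ii) the IFT gives abstract smoothness but not the explicit $\mathcal O(\|\varepsilon\cdot f\|^{K+1})$ bound with a uniform constant, so you would still have to unwind the fixed-point iteration to produce the stated estimate. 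Your bootstrap for the remainder (crude $\mathcal O(\|\varepsilon\cdot f\|^2)$ bound, improved order by order) is morally the same as the paper's induction over $j$ in the $U_j$'s, but the paper does it by inducting on the partial remainders $U_j$ with the telescoping trick, not on successive improvements of $\mathcal R_\varepsilon=U_K$ directly — and the step you yourself flag as ``the main obstacle'' (separating the $\mathcal R_\varepsilon$-dependent terms from the pure $\varepsilon^{\ge K+1}$ terms in $\widetilde{\mathcal T}_K$) is exactly the content of the reformulation \eqref{eq_T_k} and the two appendix lemmas, so you have identified the right difficulty but not resolved it. Your alternative via the integral Taylor remainder is cleaner conceptually but requires uniform bounds on the $(K+1)$-st $\varepsilon$-derivative of $u_\varepsilon$ at \emph{nonzero} intermediate $\varepsilon$, which again needs the IFT machinery and uniform constants; the paper sidesteps all of that by working exclusively with solutions of explicit linear PDEs.
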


\subsection{Linearization of the Dirichlet-to-Neumann map}
Proposition \ref{prop_linearization} implies that the solution $u_\varepsilon$ to \eqref{eq_3_linearization} is differentiable with respect to $\varepsilon$ at $\varepsilon = 0$ in $H^s(\R^n)\cap C^{s}(\R^n)$ up to the $K$-th order. 
In particular,  by applying the multivariate finite differences, which are approximations of the derivative, we obtain
\begin{equation}\label{eq:derivatives-of-uepsilon}
\frac{\p^{|\alpha|}}{\p \varepsilon^\alpha}
\Big|_{\varepsilon = 0}u_\varepsilon = w_\alpha,\quad \alpha\in \mathbb{N}^K_0.
\end{equation}

This allows us to define the partial derivatives of the DN-map $\Lambda_\mathbf{P}$ with respect to $\varepsilon$ at $\varepsilon = 0$ as follows:
\begin{align*}
\LA \frac{\p^{|\alpha|}}{\p \varepsilon^\alpha}\Big|_{\varepsilon=0}\Lambda_\mathbf{P}(\varepsilon\cdot f),g \RA 
:= 
\frac{\p^{|\alpha|}}{\p \varepsilon^\alpha}
\Big|_{\varepsilon=0}
B_\mathbf{P}(u_\varepsilon,g)
\quad \hbox{for }\varepsilon\cdot f\in \mathcal{X}_{\varepsilon_0}(\Omega_e),\quad g\in H^s(\R^n).
\end{align*}

By using the derivatives given above, and the fact $u_\varepsilon=0$ when $\varepsilon=0$, we obtain the following lemma through direct computation. 
\begin{lemma}\label{lamma_DN_higher}
Let $f=(f_1,\ldots,f_K)$, where $f_\ell\in C^\infty_c(\Omega_e)$ satisfies $\|f_\ell\|_{C_c^\infty(\Omega_e)}\le 1/K$. For sufficiently small $\varepsilon = (\varepsilon_1,\ldots,\varepsilon_K)$ such that $|\varepsilon|<\varepsilon_0$, we have for $|\alpha|\ge 1$
that
\begin{align*}
\LA \frac{\p^{|\alpha|}}{\p \varepsilon^\alpha} \Big|_{\varepsilon=0}\Lambda_\mathbf{P}(\varepsilon\cdot f),g \RA = \int_{\R^n} (-\Delta)^{\frac{s}{2}}w_\alpha (-\Delta)^{\frac{s}{2}}g \dx + \int_\Omega q(x)w_\alpha\, g \dx+
\int_{\Omega}\mathcal{T}_{\alpha}g \dx,  \quad\hbox{for }g\in H^s(\R^n).
\end{align*}
Notice that $\mathcal T_\alpha \equiv 0$ when $|\alpha|=1$. 
\end{lemma}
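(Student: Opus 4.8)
The plan is to differentiate the bilinear form $B_{\mathbf{P}}(u_\varepsilon,g)$ in $\varepsilon$ at $\varepsilon=0$ term by term. By the definition of the $\varepsilon$-derivative of $\Lambda_{\mathbf{P}}$ and of $B_{\mathbf{P}}$,
\[
\LA \frac{\p^{|\alpha|}}{\p\varepsilon^\alpha}\Big|_{\varepsilon=0}\Lambda_{\mathbf{P}}(\varepsilon\cdot f),g\RA
=\frac{\p^{|\alpha|}}{\p\varepsilon^\alpha}\Big|_{\varepsilon=0}\LC\int_{\R^n}(-\Delta)^{\frac{s}{2}}u_\varepsilon(-\Delta)^{\frac{s}{2}}g\dx+\int_\Omega q\,u_\varepsilon g\dx+\int_\Omega \mathbf{P}(u_\varepsilon)g\dx\RC ,
\]
and I would handle the three summands one at a time. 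From Proposition~\ref{prop_linearization} and the discussion following it, $\varepsilon\mapsto u_\varepsilon$ is $|\alpha|$ times differentiable near $\varepsilon=0$ as a map into $H^s(\R^n)\cap C^s(\R^n)$, with $\frac{\p^{|\alpha|}}{\p\varepsilon^\alpha}|_{\varepsilon=0}u_\varepsilon=w_\alpha$ by \eqref{eq:derivatives-of-uepsilon} and $u_0=0$. Since $u\mapsto\int_{\R^n}(-\Delta)^{\frac{s}{2}}u(-\Delta)^{\frac{s}{2}}g\dx$ and $u\mapsto\int_\Omega q\,u\,g\dx$ are bounded linear functionals on $H^s(\R^n)$ (the latter using $q\in L^\infty(\Omega)$ and boundedness of $\Omega$), the $\varepsilon$-derivative commutes with them, and evaluating at $\varepsilon=0$ produces exactly the first two terms $\int_{\R^n}(-\Delta)^{\frac{s}{2}}w_\alpha(-\Delta)^{\frac{s}{2}}g\dx+\int_\Omega q\,w_\alpha g\dx$ of the asserted identity.

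For the nonlinear term it suffices to prove $\frac{\p^{|\alpha|}}{\p\varepsilon^\alpha}|_{\varepsilon=0}\mathbf{P}(u_\varepsilon)=\mathcal{T}_\alpha$ pointwise (in $L^\infty(\Omega)$) and then to move the $\varepsilon$-derivative inside $\int_\Omega(\cdot)\,g\dx$; the latter is justified because $\varepsilon\mapsto\mathbf{P}(u_\varepsilon)$ is $|\alpha|$ times differentiable into $C(\overline\Omega)$ with locally bounded derivatives (it is a polynomial expression in $u_\varepsilon$ and its derivatives $D^\sigma u_\varepsilon$ with $|\sigma|\le m<s$, and $C^{s-m}(\R^n)\subset L^\infty(\Omega)$), while $g\in L^1(\Omega)$. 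To obtain the pointwise identity I would write $\mathbf{P}(u_\varepsilon)=\sum_{\ell=1}^{K-1}(u_\varepsilon)^{\ell}P_\ell(x,D)u_\varepsilon$ and apply the multivariate Leibniz rule to each summand, regarded as a product of $\ell+1$ factors ($\ell$ copies of $u_\varepsilon$ and one copy of $P_\ell(x,D)u_\varepsilon$, the operator $P_\ell$ being $\varepsilon$-independent). Because $\frac{\p^{\gamma}}{\p\varepsilon^{\gamma}}|_{\varepsilon=0}u_\varepsilon=w_\gamma$ and $w_0=0$, upon evaluating at $\varepsilon=0$ only the terms in which every multi-index attached to a factor is nonzero survive; writing $\beta_1,\dots,\beta_\ell$ for the multi-indices on the undifferentiated copies, $\beta:=\beta_1+\dots+\beta_\ell$, and $\alpha-\beta$ (necessarily nonzero, i.e.\ $\beta<\alpha$) for the one on $P_\ell(x,D)u_\varepsilon$, and using $\binom{\alpha}{\beta_1,\dots,\beta_\ell,\alpha-\beta}=\binom{\alpha}{\beta}\binom{\beta}{\beta_1,\dots,\beta_\ell}$, one recovers precisely the expression \eqref{eq_3_def_T} for $\mathcal{T}_\alpha$ (with the convention $P_\ell\equiv0$ for $\ell\ge K$, so the outer sum effectively runs over $1\le\ell\le\min\{|\alpha|-1,\,K-1\}$; in particular $\mathcal{T}_\alpha\equiv0$ for $|\alpha|=1$). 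Collecting the three contributions then gives the claimed formula.

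I expect the only genuinely delicate point to be the combinatorial bookkeeping in the last step: one must check that the generalized Leibniz expansion of the $(\ell+1)$-fold products in $\mathbf{P}(u_\varepsilon)$, after discarding the terms killed by $w_0=0$ and regrouping the multinomial coefficients, coincides term by term with the definition of $\mathcal{T}_\alpha$. The interchanges of $\p_\varepsilon$ with the spatial integrals and the differentiation of the two linear terms are routine, relying only on the differentiability and stability estimates \eqref{EST_R} of Proposition~\ref{prop_linearization}. Alternatively, one may avoid the Leibniz rule altogether by substituting the expansion \eqref{eq_3_linear_u_expansion} of $u_\varepsilon$ into $\mathbf{P}(u_\varepsilon)$, using the higher-order bound \eqref{EST_R} on $\mathcal{R}_\varepsilon$ to discard remainder contributions to the $|\alpha|$-th $\varepsilon$-derivative at $0$, and reading off the coefficient of $\varepsilon^\alpha$; this is essentially the computation already carried out in the Appendix to establish Proposition~\ref{prop_linearization}.
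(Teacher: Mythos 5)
Your proposal is correct and follows the same approach as the paper: differentiate $B_\mathbf{P}(u_\varepsilon,g)$ in $\varepsilon$ at $\varepsilon=0$, using $u_0=0$ and $\p_\varepsilon^\gamma|_{\varepsilon=0}u_\varepsilon=w_\gamma$ so that only fully differentiated factors survive in the nonlinear term, and identify the resulting multinomial expansion with $\mathcal T_\alpha$. The paper carries out the $|\alpha|=1$ and $|\alpha|=2$ cases explicitly and asserts the rest "by a similar argument," whereas you write out the general Leibniz/multinomial bookkeeping, which fills in exactly the detail the paper elides.
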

\begin{proof}
We first show the first-order derivative of the DN-map. 
For any $g\in H^s(\R^n)$, by the definition of $\Lambda_\mathbf{P}$, taking the derivatives on the bilinear form yields that 
\begin{align*}
\LA \p _{\varepsilon_\ell}\big|_{\varepsilon=0}\Lambda_\mathbf{P}(\varepsilon\cdot f),g\RA =\p_{\varepsilon_{\ell}} \big|_{\varepsilon=0}B_\mathbf{P}(u_\varepsilon,g) = \int_{\R^n}(-\Delta)^{\frac{s}{2}}v_\ell (-\Delta)^{\frac{s}{2}}g \dx + \int_\Omega q(x)v_\ell\, g \dx.
\end{align*}
Note that all nonlinear terms vanish as $u_\varepsilon=0$ when $\varepsilon=0$. 
Similarly, let $\alpha=e_{\ell_1}+e_{\ell_2}$, for second-order partial derivatives of $\Lambda_\mathbf{P}$, since only linear terms and the quadratic nonlinearity in $B_\mathbf{P}$ remain after taking derivatives, we get
\begin{align*}
\LA \p_{\varepsilon_{\ell_1}} \p_{\varepsilon_{\ell_2}} \big|_{\varepsilon=0}\Lambda_\mathbf{P}(\varepsilon\cdot f),g \RA &=\p_{\varepsilon_{\ell_1}} \p_{\varepsilon_{\ell_2}} \big|_{\varepsilon=0}B_\mathbf{P}(u_\varepsilon,g)\\
&=\int_{\R^n} (-\Delta)^{\frac{s}{2}}w_{\alpha} (-\Delta)^{\frac{s}{2}}g \dx + \int_\Omega q(x) w_{\alpha}\, g  \dx\\
&\quad
+\int_{\Omega} \LC v_{\ell_1}P_1(x,D)v_{\ell_2}  + v_{\ell_2}P_1(x,D)v_{\ell_1} \RC g \dx.
\end{align*}
Following a similar argument, we can also derive the integral representation of  
$\frac{\p^{|\alpha|}}{\p \varepsilon^\alpha}
\Big|_{\varepsilon=0}\Lambda_\mathbf{P}(\varepsilon\cdot f)$ for $|\alpha|\geq 3$.
\end{proof}

\section{Proof of the main theorem}\label{sec:proof of theorem}
In this section, we will discuss how to recover all the unknown coefficients in the equation \eqref{eq_1_IBVP}, thus proving Theorem~\ref{thm_main}. First, through the first-order linearization, the problem is reduced to the inverse problem for the linearized equation which only contains $q$. Once this is established, we apply the higher-order linearization technique to determine each $a_{\sigma,k}$.

Let $u_{j,\varepsilon }$ ($j=1,\,2$) be the small unique solution to the Dirichlet problem:
\begin{equation}\label{eq_3_linearization section 4}
\begin{cases}
[(-\Delta)^s + q_j(x)]u_{j,\varepsilon } =- \mathbf{P}_j(u_{j,\varepsilon}) &\hbox{ in } \Omega,\\
u_{j,\varepsilon } =\varepsilon\cdot f &\hbox{ in } \Omega_e,
\end{cases}
\end{equation}
where $f_\ell\in C^\infty_c(\Omega_e)$ with $\|f_\ell\|_{C_c^\infty(\Omega_e)}\le 1/K$ and $\varepsilon = (\varepsilon_1,\ldots,\varepsilon_K)$ with $|\varepsilon|<\varepsilon_0$.  
Here 
\begin{align*}
\mathbf{P}_j(u_{j,\varepsilon })=\sum^{K-1}_{k=1}u_{j,\varepsilon }^{k}P_{j,k}(x,D) u_{j,\varepsilon }, \qquad\hbox{ where }
P_{j,k}(x,D) = \sum_{|\sigma|\leq m}a_{\sigma,k }^{(j)}  (x)D^\sigma.
\end{align*} 
For $\varepsilon_0$ small enough, one can write the $\varepsilon$-expansion in $H^s(\R^n)\cap C^s(\R^n)$
\begin{equation}\label{u_expansion}
u_{j,\varepsilon } =  \sum_{\ell=1}^K\varepsilon_{\ell} v_{j,\ell}
+ 
\sum_{|\alpha|=2}\frac{\varepsilon^\alpha w_{j,\alpha}}{\alpha!}
+ 
\sum_{|\alpha|=3}\frac{\varepsilon^\alpha w_{j,\alpha}}{\alpha!}+ \ldots+  \sum_{|\alpha|=K}\frac{\varepsilon^\alpha w_{j,\alpha}}{\alpha!}+\mathcal{R}_{j,\varepsilon},
\end{equation}
where $v_{j,\ell}$ satisfy
\begin{equation}\label{eq_3_linear vj}
\begin{cases}
[(-\Delta)^s + q_j(x)]v_{j,\ell} = 0 &\hbox{ in } \Omega,\\
v_{j,\ell} = f_\ell &\hbox{ in } \Omega_e,
\end{cases}
\end{equation}
and $w_{j,\alpha}$ ($|\alpha|\geq 2$) satisfy
\begin{equation}\label{eq_3_linear_w j}
\begin{cases}
[(-\Delta)^s + q(x)]w_{j,\alpha}=-\mathcal{T}_{j,\alpha} &\hbox{ in } \Omega,\\
w_{j,\alpha}= 0 &\hbox{ in } \Omega_e.
\end{cases}
\end{equation}

\subsection{Recovering the linear coefficient}
To recover the linear coefficient $q$, we just need one small parameter. Thus, we take $\varepsilon_1=1$ and $\varepsilon_2=\ldots=\varepsilon_K=0$. Then $\varepsilon\cdot f = \varepsilon_1 f_1$ for some $f_1\in C^\infty_c(W_1)$. Taking partial derivatives $\p _{\varepsilon_1}\big|_{\varepsilon=0}$ on the DN-map $\Lambda_{\mathbf{P}_j}$, $j=1,2$, yields for all $g\in C^\infty_c(W_2)$ that
\begin{align*}
\LA 
\p _{\varepsilon_1}\big|_{\varepsilon=0}\Lambda_{\mathbf{P}_j} (\varepsilon\cdot f),g \RA 
= \p _{\varepsilon_1}\big|_{\varepsilon=0}
B_{\mathbf{P}_j}(u_{j,\varepsilon},g) 
=\int_{\R^n} (-\Delta)^{\frac{s}{2}}v_{j,1} (-\Delta)^{\frac{s}{2}}g \dx + \int_\Omega q_j(x)v_{j,1} \, g \dx.
\end{align*}
The assumption of Theorem \ref{thm_main} that
$\Lambda_{\mathbf{P}_{1}} h|_{W_2} =\Lambda_{\mathbf{P}_{2}}h|_{W_2}$, for all $h\in \mathcal{X}_{\varepsilon_0} (W_1)$, together with the equality above, implies that 
\[
\int_{\R^n} (-\Delta)^{\frac{s}{2}}v_{1,1} (-\Delta)^{\frac{s}{2}}g \dx + \int_\Omega q_1(x)v_{1,1} \, g \dx
=\int_{\R^n} (-\Delta)^{\frac{s}{2}}v_{2,1} (-\Delta)^{\frac{s}{2}}g \dx + \int_\Omega q_2(x)v_{2,1} \, g \dx,
\]
which indicates the DN-map for the linear problem \eqref{eq_3_linear vj} are the same. 
Now \cite[Theorem 1.3]{CMR21} yields the desired unique determination of the linear coefficient $q(x)$, as stated in the following lemma:
\begin{lemma}\label{lemma_recover q}
Suppose the same assumptions as for Theorem~\ref{thm_main} hold.

Then 
\begin{align*}
q_1=q_2\quad\hbox{ in }\Omega.
\end{align*}
\end{lemma}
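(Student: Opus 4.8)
The plan is to reduce the recovery of $q$ to a known uniqueness result for the linear fractional Schrödinger equation. The key observation, already spelled out in the excerpt immediately before the statement, is that a first-order linearization of the nonlinear DN-map produces exactly the DN-map of the linear problem $(-\Delta)^s v + q_j v = 0$. Concretely, I would set $\varepsilon_1 = 1$, $\varepsilon_2 = \ldots = \varepsilon_K = 0$, so that $\varepsilon \cdot f = \varepsilon_1 f_1$ with $f_1 \in C^\infty_c(W_1)$, and differentiate the DN-map identity $\Lambda_{\mathbf{P}_1} h|_{W_2} = \Lambda_{\mathbf{P}_2} h|_{W_2}$ in $\varepsilon_1$ at $\varepsilon = 0$. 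By Lemma~\ref{lamma_DN_higher} (with $|\alpha| = 1$, where $\mathcal{T}_\alpha \equiv 0$), this yields, for all test functions $g \in C^\infty_c(W_2)$,
\[
\int_{\R^n} (-\Delta)^{\frac{s}{2}} v_{1,1}\,(-\Delta)^{\frac{s}{2}} g \dx + \int_\Omega q_1 v_{1,1}\, g \dx = \int_{\R^n} (-\Delta)^{\frac{s}{2}} v_{2,1}\,(-\Delta)^{\frac{s}{2}} g \dx + \int_\Omega q_2 v_{2,1}\, g \dx,
\]
where $v_{j,1}$ solves \eqref{eq_3_linear vj} with exterior data $f_1$. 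Since $f_1 \in C^\infty_c(W_1)$ and $g \in C^\infty_c(W_2)$ are arbitrary, this says precisely that the (bilinear-form) DN-maps for the two linear problems agree as measurements from $W_1$ to $W_2$.

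Next I would invoke the known uniqueness theorem for the linear fractional Schrödinger equation with a potential — namely \cite[Theorem 1.3]{CMR21}, which covers the case $s \in \R^+\setminus \mathbb{Z}$, bounded open $\Omega$ with smooth boundary, nonnegative $L^\infty$ potentials, and arbitrary nonempty open measurement sets $W_1, W_2 \subset \Omega_e$. The hypotheses of Theorem~\ref{thm_main} were tailored precisely so that these conditions hold: $q_j \in L^\infty(\Omega)$ with $q_j \geq 0$, and $W_1, W_2$ are nonempty bounded open subsets of $\Omega_e$. Applying that theorem to the equality of linear DN-maps just derived gives $q_1 = q_2$ in $\Omega$, which is the assertion of Lemma~\ref{lemma_recover q}.

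The only points requiring a little care are bookkeeping rather than substance. First, one must check that the first-order $\varepsilon$-derivative of the bilinear form $B_{\mathbf{P}_j}$ indeed picks out only the linear part: this follows because $u_{j,\varepsilon}|_{\varepsilon = 0} = 0$ (as $w_{(0,\ldots,0)} = 0$), so every term in $\mathbf{P}_j(u_{j,\varepsilon})$, being at least quadratic in $u_{j,\varepsilon}$, has vanishing first derivative at $\varepsilon = 0$ — this is exactly the content of Lemma~\ref{lamma_DN_higher}, so it may be quoted. Second, one should note that the formulation of \cite[Theorem 1.3]{CMR21} may use the DN-map defined via the bilinear form (or an equivalent weak/Neumann-trace formulation); matching the conventions is routine given the identity above. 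I do not anticipate a genuine obstacle here: the entire content is that the first linearization is clean and the linear fractional Calderón problem with partial data is already solved in the cited reference. If anything, the "hard part" is purely expository — making sure the reader sees that the measurement sets and regularity assumptions transfer verbatim — and that is handled by the statement "Suppose the same assumptions as for Theorem~\ref{thm_main} hold."
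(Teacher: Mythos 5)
Your proposal is correct and takes essentially the same approach as the paper: first-order linearization via $\partial_{\varepsilon_1}\big|_{\varepsilon=0}$ with $\varepsilon_2=\ldots=\varepsilon_K=0$, extraction of the equality of linear DN-maps from Lemma~\ref{lamma_DN_higher}, and then direct appeal to \cite[Theorem 1.3]{CMR21}. Your bookkeeping remarks (that $u_{j,\varepsilon}|_{\varepsilon=0}=0$ kills the nonlinear terms at first order) are exactly the content the paper relies on.
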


Since Lemma~\ref{lemma_recover q} gives $q_1=q_2$, the linearized problems \eqref{eq_3_linear vj} satisfied by $v_{j,1}$ ($j=1,\,2$) are the same. The well-posedness theorem in Section~\ref{sec:pre} implies
$$
v_1 :=v_{1,1}=v_{2,1}\quad \hbox{ in }\R^n.
$$
Similarly one can get 
$$
v_\ell :=v_{1,\ell}=v_{2,\ell}\quad \hbox{ in }\R^n,\quad \hbox{ for }\ell=2,\ldots, K.
$$

\subsection{An integral identity for the nonlinear coefficients}
We denote
$$
\widetilde{a}_{\sigma, k}(x):=a^{(1)}_{\sigma, k}(x)-a^{(2)}_{\sigma, k}(x), \quad |\sigma|\leq m,\quad k=1,\ldots, K-1,
$$
and 
$$
\widetilde{w}_\alpha:=w_{1,\alpha}-w_{2,\alpha}, \quad 2\leq |\alpha|\leq K.
$$
The function $ \widetilde{w}_\alpha$ then solves $[(-\Delta)^s + q(x)]\widetilde{w}_{\alpha}=-(\mathcal{T}_{1,\alpha}-\mathcal{T}_{2,\alpha})$ with vanishing exterior data.

\begin{lemma}\label{lemma:w the same}
Given $k$ such that $2\leq k\leq K$, assume that 
\begin{align}\label{hypothesis alpha}
a^{(1)}_{\sigma,t}=a^{(2)}_{\sigma, t} \quad\hbox{for $|\sigma|\leq m$ and $t=1,\ldots, k-2$. }
\end{align}
Then for all $1\leq i\leq k-1$ and all multi-indices $\alpha$ such that $|\alpha|= i$, we have that
\begin{align}\label{hypothesis w}
\mathcal{T}_{1,\alpha} =\mathcal{T}_{2,\alpha}\quad\hbox{and} \quad w_{1,\alpha} =w_{2,\alpha}.
\end{align}
Moreover, for $\alpha=e_{1}+\ldots+e_{k}$ with $|\alpha|=k$, the following holds:
$$
\mathcal{T}_{1,\alpha} -\mathcal{T}_{2,\alpha}= \sum_{|\sigma|\le m} \widetilde{a}_{\sigma, k-1}(x)\LC \sum_{\pi\in S_k} v_{\pi_1}\ldots v_{\pi_{k-1}}D^\sigma v_{\pi_k}\RC. 
$$

Here $\pi: \{1,2,\cdots,k\}\rightarrow \{1,2,\cdots,k\}$ is a permutation function, and $S_{k}$ is the set of all permutations of $k$ elements. We have denoted $\pi_i:=\pi(i)$ for the purpose of simplifying the notation. 

\end{lemma}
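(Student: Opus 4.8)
\emph{Proof plan.} The plan is to prove the two assertions in turn: the inductive claim \eqref{hypothesis w} by induction on $i$, and the level-$k$ identity by substituting this conclusion into the definition \eqref{eq_3_def_T} of $\mathcal T_\alpha$ and tracking which terms survive the difference. Throughout I use $q_1=q_2=:q$ from Lemma~\ref{lemma_recover q}, so that for every $\alpha$ the functions $w_{1,\alpha}$ and $w_{2,\alpha}$ solve Dirichlet problems of the form \eqref{eq_3_linear_w j} with the \emph{same} linear operator $(-\Delta)^s+q$ and (for $|\alpha|\ge 2$) vanishing exterior data.

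For the induction, the base case $i=1$ holds because $\mathcal T_{j,\alpha}\equiv 0$ when $|\alpha|=1$ (the remark after \eqref{eq_3_def_T}) and $w_{j,e_\ell}=v_{j,\ell}$, while $v_{1,\ell}=v_{2,\ell}=:v_\ell$ was already shown in the previous subsection. For the inductive step, fix $i$ with $2\le i\le k-1$ and assume \eqref{hypothesis w} for all multi-indices of length $\le i-1$. Inspecting \eqref{eq_3_def_T} with $|\alpha|=i$: since $\beta=\sum_{r=1}^\ell\beta_r<\alpha$ with each $\beta_r\neq 0$, one has $|\beta|\le i-1$, hence $|\beta_r|\le i-1$ and $|\alpha-\beta|=i-|\beta|\le i-1$, so every factor $w_{\beta_r}$ and $w_{\alpha-\beta}$ falls under the inductive hypothesis and is $j$-independent; moreover every operator $P_{j,\ell}$ occurring has $\ell\le|\alpha|-1\le k-2$, so $P_{1,\ell}=P_{2,\ell}$ by \eqref{hypothesis alpha}. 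Therefore $\mathcal T_{1,\alpha}=\mathcal T_{2,\alpha}$, and since $w_{1,\alpha},w_{2,\alpha}$ then solve \eqref{eq_3_linear_w j} with identical right-hand sides and exterior data, the uniqueness part of Proposition~\ref{prop_wellposedness_linear} yields $w_{1,\alpha}=w_{2,\alpha}$, closing the induction.

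For the identity at level $k$, take $\alpha=e_1+\ldots+e_k$, so $|\alpha|=k$ and every proper sub-multi-index $\gamma<\alpha$ has entries in $\{0,1\}$ with at most $k-1$ of them equal to $1$, whence $|\gamma|\le k-1$. By the first part all factors $w_{j,\beta_r},w_{j,\alpha-\beta}$ in \eqref{eq_3_def_T} are $j$-independent, so $\mathcal T_{1,\alpha}-\mathcal T_{2,\alpha}$ picks up contributions only from terms with $P_{1,\ell}\neq P_{2,\ell}$, i.e.\ by \eqref{hypothesis alpha} only from $\ell=k-1$. For $\ell=k-1$, the constraints $\beta_r\neq 0$ and $\sum_{r=1}^{k-1}\beta_r=\beta<\alpha$ with $|\alpha|=k$ force $|\beta_r|=1$ for every $r$ and $|\beta|=k-1$; writing $\beta_r=e_{\pi_r}$, the $\pi_1,\ldots,\pi_{k-1}$ are distinct elements of $\{1,\ldots,k\}$, $\alpha-\beta=e_{\pi_k}$ with $\pi_k$ the remaining index, and $(\beta_1,\ldots,\beta_{k-1})\mapsto\pi$ is a bijection onto $S_k$. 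All coefficients $\binom{\alpha}{\beta}$ and $\binom{\beta}{\beta_1,\ldots,\beta_{k-1}}$ equal $1$ since $\alpha,\beta,\beta_r$ have entries in $\{0,1\}$, and $w_{j,e_{\pi_r}}=v_{\pi_r}$. Collecting terms gives
\[
\mathcal T_{1,\alpha}-\mathcal T_{2,\alpha}
=\sum_{\pi\in S_k}v_{\pi_1}\cdots v_{\pi_{k-1}}\bigl(P_{1,k-1}(x,D)-P_{2,k-1}(x,D)\bigr)v_{\pi_k}
=\sum_{|\sigma|\le m}\widetilde a_{\sigma,k-1}(x)\Bigl(\sum_{\pi\in S_k}v_{\pi_1}\cdots v_{\pi_{k-1}}D^\sigma v_{\pi_k}\Bigr),
\]
which is the asserted formula. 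The main obstacle is precisely this last bookkeeping: checking that only $\ell=k-1$ survives, that the admissible tuples $(\beta_1,\ldots,\beta_{k-1})$ are forced to be permutations of $\{1,\ldots,k\}$, and that the binomial and multinomial factors all collapse to $1$; the induction itself is then a routine propagation of equalities through \eqref{eq_3_def_T} and the uniqueness for \eqref{eq_3_linear_w j}.
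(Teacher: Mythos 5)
Your proof is correct and follows essentially the same route as the paper: induction on $|\alpha|$ using the recursive structure of $\mathcal T_\alpha$ and uniqueness for the linear Dirichlet problem, then a combinatorial reduction at level $k$ to isolate the $\ell=k-1$ term. You spell out the bookkeeping (that $\beta<\alpha=e_1+\ldots+e_k$ forces $|\beta_r|=1$, that the surviving tuples $(\beta_1,\ldots,\beta_{k-1})$ biject onto $S_k$, and that all binomial and multinomial factors collapse to $1$ because $\alpha,\beta,\beta_r$ are $\{0,1\}$-valued) more explicitly than the paper does, but the argument is the same.
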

\begin{proof}
We have already shown that in the case $|\alpha|=1$ it holds $v_{1,\ell}=v_{2,\ell}$. When $i=|\alpha|=2$, as $\mathcal{T}_{j,\alpha}$ only have $v_{j,\ell}$ and $a_{\sigma, 1}^{(j)}$, the assumption \eqref{hypothesis alpha} implies $\mathcal{T}_{1,\alpha}=\mathcal{T}_{2,\alpha}$. This suggests that $w_{1,\alpha}$ and $w_{2,\alpha}$ ($|\alpha|=2$) solve the same Dirichlet problems, and thus they must be identical due to well-posedness.     

Now we apply the induction argument by assuming that 
\begin{align}\label{induction hypo w}
w_{1,\alpha} =w_{2,\alpha} \quad\hbox{for all $|\alpha|\leq j$}
\end{align}
holds for any $2\leq j<i$. We will show that $w_{1,\alpha} =w_{2,\alpha}$ for $|\alpha|=j+1$.
Note that the function $ \widetilde{w}_\alpha$ solves $[(-\Delta)^s + q(x)]\widetilde{w}_{\alpha}=-(\mathcal{T}_{1,\alpha}-\mathcal{T}_{2,\alpha})$ with trivial exterior data. 
Here by definition, $\mathcal{T}_{1,\alpha}$ and $\mathcal{T}_{2,\alpha}$ only depend on $w_\beta$ for $|\beta|\leq |\alpha|-1=j$ and $a^{(j)}_{\sigma,t}$ for $|\sigma|\leq m$ and $t\leq |\alpha|-1=j$. From \eqref{hypothesis alpha} and \eqref{induction hypo w}, this suggests that all terms in $\mathcal{T}_{1,\alpha}$ and $\mathcal{T}_{2,\alpha}$ coincide. Finally, the well-posedness theorem yields that $w_{1,\alpha} = w_{2,\alpha}$ for $|\alpha|=j+1$, which completes the induction argument.

To show the second result, we take $\alpha=e_{1}+\ldots+e_{k}$ with $|\alpha|=k$. We already know that \eqref{hypothesis alpha} and \eqref{hypothesis w} hold, which yields every term in $\mathcal{T}_{1,\alpha}$ and $\mathcal{T}_{2,\alpha}$ are the same, except the nonlinear coefficients in $P_{1,k-1}(x,D)$ and $P_{2,k-1}(x,D)$. We get
\begin{align*}
\mathcal{T}_{1,\alpha} -\mathcal{T}_{2,\alpha}&=\sum_{\substack{\beta_1,\ldots, \beta_{k}\in \mathbb{N}^K_0\setminus \{0,\cdots,0\} : \\ \alpha= \sum_{j=1}^k \beta_j }}v_{\beta_1}\ldots v_{\beta_{k-1}}(P_{1,k-1}(x,D)-P_{2,k-1}(x,D))v_{\beta_k}\\
&=\sum_{|\sigma|\le m} \widetilde{a}_{\sigma, k-1}(x)\LC \sum_{\pi\in S_k} v_{\pi_1}\ldots v_{\pi_{k-1}}D^\sigma v_{\pi_k}\RC,
\end{align*}
which completes the proof.
\end{proof}

To prove unique determination of all nonlinear coefficients, we need the following integral identity that can bridge the DN-map (known data) to the unknown coefficients $a_{\sigma, t}$.
\begin{lemma}\label{lemma_Integral identity}
Let $k=2,\ldots, K$. Suppose that \eqref{hypothesis alpha} and all the hypotheses in Theorem~\ref{thm_main} hold. 

Then for $\alpha=e_{1}+\ldots+e_{k}$ with $|\alpha|=k$,
\begin{align}\label{ID:integral identity higher-order new}
\int_\Omega \sum_{|\sigma|\le m} \widetilde{a}_{\sigma,k-1}(x)\LC \sum_{\pi\in S_k} v_{\pi_1}\ldots v_{\pi_{k-1}}D^\sigma v_{\pi_k}\RC v_0 \dx = 0,
\end{align}
where
$v_0\in H^s(\R^n)\cap C^{s}(\R^n)$ solves the adjoint problem
\begin{equation}\label{eq_3_v_0 new}
\begin{aligned}
\begin{cases}
(-\Delta)^s v_0 +   q(x)  v_0  =0 &\hbox{ in } \Omega,\\
v_0 = f_{0} &\hbox{ in } \Omega_e,
\end{cases}
\end{aligned}
\end{equation}
for some $f_0\in C^\infty_c(W_2)$. 
\end{lemma}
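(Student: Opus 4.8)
The plan is to extract the identity from the $k$-th order $\varepsilon$-linearization of the two DN-maps. First I would fix $\alpha=e_1+\cdots+e_k$ with $|\alpha|=k$ and pick exterior data $f=(f_1,\ldots,f_K)$ with $f_1,\ldots,f_k\in C^\infty_c(W_1)$, $f_{k+1}=\cdots=f_K=0$ and $\|f_\ell\|_{C^\infty_c(W_1)}\le 1/K$, so that $\varepsilon\cdot f\in\mathcal X_{\varepsilon_0}(W_1)$ for $|\varepsilon|<\varepsilon_0$ and the hypothesis $\Lambda_{\mathbf P_1}(\varepsilon\cdot f)|_{W_2}=\Lambda_{\mathbf P_2}(\varepsilon\cdot f)|_{W_2}$ applies. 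Differentiating this equality by $\p_{\varepsilon_1}\cdots\p_{\varepsilon_k}$ at $\varepsilon=0$ and inserting the formula of Lemma~\ref{lamma_DN_higher} gives, for every $g\in C^\infty_c(W_2)$,
\begin{align*}
&\int_{\R^n}(-\Delta)^{\frac s2}w_{1,\alpha}(-\Delta)^{\frac s2}g\dx+\int_\Omega q w_{1,\alpha}g\dx+\int_\Omega\mathcal T_{1,\alpha}g\dx\\
&\qquad=\int_{\R^n}(-\Delta)^{\frac s2}w_{2,\alpha}(-\Delta)^{\frac s2}g\dx+\int_\Omega q w_{2,\alpha}g\dx+\int_\Omega\mathcal T_{2,\alpha}g\dx.
\end{align*}
Since $\supp g\subset W_2\subset\Omega_e$ is disjoint from $\Omega$, the last two integrals on each side drop out, and subtracting leaves
\[
\int_{\R^n}(-\Delta)^{\frac s2}\widetilde w_\alpha(-\Delta)^{\frac s2}g\dx=0\qquad\text{for all }g\in C^\infty_c(W_2).
\]

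Next I would bring in the adjoint solution $v_0\in H^s(\R^n)\cap C^s(\R^n)$ of \eqref{eq_3_v_0 new} associated with $f_0\in C^\infty_c(W_2)$, whose existence and $C^s$ regularity come from Propositions~\ref{prop_wellposedness_linear} and~\ref{prop_Holder_linear}, noting $v_0-f_0\in\widetilde H^s(\Omega)$, and recall that $\widetilde w_\alpha\in\widetilde H^s(\Omega)$ weakly solves $[(-\Delta)^s+q]\widetilde w_\alpha=-(\mathcal T_{1,\alpha}-\mathcal T_{2,\alpha})$ with $\mathcal T_{1,\alpha}-\mathcal T_{2,\alpha}\in L^\infty(\Omega)$. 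Testing the weak formulation of $\widetilde w_\alpha$ against $v_0-f_0$ and the weak formulation of $v_0$ against $\widetilde w_\alpha$, then subtracting, the symmetric terms $\int_{\R^n}(-\Delta)^{\frac s2}\widetilde w_\alpha(-\Delta)^{\frac s2}v_0\dx$ and $\int_\Omega q\widetilde w_\alpha v_0\dx$ cancel; using again that $\supp f_0\subset W_2$ is disjoint from $\Omega$ (so $\int_\Omega q\widetilde w_\alpha f_0\dx=0$ and $\int_\Omega(\mathcal T_{1,\alpha}-\mathcal T_{2,\alpha})f_0\dx=0$), I would obtain
\[
\int_\Omega(\mathcal T_{1,\alpha}-\mathcal T_{2,\alpha})\,v_0\dx=\int_{\R^n}(-\Delta)^{\frac s2}\widetilde w_\alpha(-\Delta)^{\frac s2}f_0\dx,
\]
whose right-hand side vanishes by the previous display taken with $g=f_0$.

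Finally, because hypothesis~\eqref{hypothesis alpha} is assumed, Lemma~\ref{lemma:w the same} identifies, for this $\alpha$, the difference $\mathcal T_{1,\alpha}-\mathcal T_{2,\alpha}$ with $\sum_{|\sigma|\le m}\widetilde a_{\sigma,k-1}(x)\big(\sum_{\pi\in S_k}v_{\pi_1}\cdots v_{\pi_{k-1}}D^\sigma v_{\pi_k}\big)$, and substituting this into the last identity yields exactly \eqref{ID:integral identity higher-order new}. All pairings are finite since $v_\ell,v_0\in C^s(\R^n)$ with $s>m$, hence $D^\sigma v_{\pi_k}\in C^{s-m}(\R^n)\subset L^\infty$, and $\widetilde a_{\sigma,k-1}\in C(\overline\Omega)$. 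I expect no serious obstacle here: the only care needed is bookkeeping — namely that the $k$-th order differentiation of the DN-map is legitimate, which is precisely what Proposition~\ref{prop_linearization} and Lemma~\ref{lamma_DN_higher} provide, and that $v_0$ is not compactly supported, which is why one tests against $v_0-f_0$ and must track (and then discard, via $W_2\cap\Omega=\emptyset$) the resulting contributions involving $f_0$. Notably, no unique continuation property is invoked at this stage; it will be needed only to pass from \eqref{ID:integral identity higher-order new} to the identification of the coefficients.
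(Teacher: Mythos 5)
Your proof is correct and follows essentially the same route as the paper's: linearize the two DN-maps to order $k$, subtract, use the adjoint solution $v_0$ to kill the bilinear-form terms, and then invoke Lemma~\ref{lemma:w the same} (which needs hypothesis~\eqref{hypothesis alpha}) to rewrite $\mathcal T_{1,\alpha}-\mathcal T_{2,\alpha}$ in the desired form. The one cosmetic difference is in the middle: the paper subtracts the DN-map identities tested directly against $v_0$ (legitimate because the pairing only depends on the equivalence class $[v_0]=[f_0]$ in $X=H^s/\widetilde H^s(\Omega)$), and then cancels the first two terms via the weak form of the $v_0$-equation against $\widetilde w_\alpha\in\widetilde H^s(\Omega)$; you instead test against $g\in C^\infty_c(W_2)$, extract the orthogonality $\int(-\Delta)^{s/2}\widetilde w_\alpha(-\Delta)^{s/2}g\,dx=0$, and then recover $\int_\Omega(\mathcal T_{1,\alpha}-\mathcal T_{2,\alpha})v_0\,dx=\int(-\Delta)^{s/2}\widetilde w_\alpha(-\Delta)^{s/2}f_0\,dx=0$ by a symmetric integration-by-parts argument. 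Both are valid and equivalent in content; yours makes the role of $W_2$ and the disposal of the $f_0$-contributions more explicit. You also correctly note, as a sanity check, that unique continuation is not needed here (it is hidden inside the Runge approximation used in the subsequent Proposition~\ref{prop_density_second new}). One small caveat worth making explicit in your write-up: the equation satisfied by $\widetilde w_\alpha$ uses a single potential $q$, which is justified because Lemma~\ref{lemma_recover q} has already given $q_1=q_2$ before this lemma is invoked.
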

\begin{proof}

For any $k\in \{2,\ldots, K\}$, let  $\alpha=e_{1}+\ldots+e_{k}$ with $|\alpha|=k$. 
Combining 
$\frac{\p^{|\alpha|}}{\p \varepsilon^\alpha}\big|_{\varepsilon=0}\Lambda_{\mathbf{P}_1}(\varepsilon\cdot f)
=\frac{\p^{|\alpha|}}{\p \varepsilon^\alpha}\big|_{\varepsilon=0}\Lambda_{\mathbf{P}_2}(\varepsilon\cdot f)$ and Lemma~\ref{lamma_DN_higher} gives
\begin{align*}
 \int_{\R^n} (-\Delta)^{\frac{s}{2}}\widetilde{w}_\alpha(-\Delta)^{\frac{s}{2}} v_0\dx + \int_\Omega q(x)\widetilde{w}_\alpha\, v_0 \dx+ \int_{\Omega}(\mathcal{T}_{1,\alpha}-\mathcal{T}_{2,\alpha})v_0\dx=0.
\end{align*}
Using the fact that $v_0$ solves \eqref{eq_3_v_0 new} and $\widetilde{w}_\alpha|_{\Omega_e}=0$, the sum of the first two terms on the left-hand side vanishes. Thus we obtain 
\begin{align*}
\int_{\Omega}(\mathcal{T}_{1,\alpha}-\mathcal{T}_{2,\alpha})v_0\dx=0.
\end{align*}
This finishes the proof by using Lemma~\ref{lemma:w the same}.
\end{proof}

To proceed, we first recall the following Runge approximation result from \cite[Lemma 5.4] {CMR21}.
\begin{lemma}[Lemma 5.4 in \cite {CMR21}]\label{lemma_runge1 new}
Let $\Omega,\, W \subset \R^n$ respectively be nonempty, open, and bounded sets such that $\overline{W}\cap \overline{\Omega} =\emptyset$. Let $q\in L^\infty(\Omega)$ such that $q\ge 0$ in $\Omega$ and $v_f\in H^s(\R^n)$ be the unique solutions of
\begin{equation*}
\begin{cases}
(-\Delta)^s v_f + q(x) v_f = 0 &\hbox{ in } \Omega,\\
v_f = f &\hbox{ in } \Omega_e.
\end{cases}    
\end{equation*}
Then the following set 
\[
\{v_f-f:\, f\in C^\infty_c(W)\}
\]
is dense in $\widetilde{H}^s(\Omega)$.
\end{lemma}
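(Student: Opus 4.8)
The plan is to run the classical Runge approximation argument by Hahn--Banach duality, the essential ingredient being the unique continuation property (UCP) for the higher-order fractional Laplacian $(-\Delta)^s$, $s\in\R^+\setminus\mathbb Z$, established in \cite{CMR21}. First I would note that $\{v_f-f:\,f\in C^\infty_c(W)\}$ is a linear subspace of $\widetilde H^s(\Omega)$ (indeed $v_f-f$ vanishes in $\Omega_e$, so $\supp(v_f-f)\subset\overline\Omega$ and $v_f-f\in H^s_{\overline\Omega}=\widetilde H^s(\Omega)$). Hence, by Hahn--Banach, density is equivalent to showing that any $\ell\in(\widetilde H^s(\Omega))^*$ annihilating this set is the zero functional. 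Using $(\widetilde H^s(\Omega))^*=H^{-s}(\Omega)$ I would represent $\ell$ by some $F\in H^{-s}(\Omega)$, and then apply Proposition~\ref{prop_wellposedness_linear} (with source $F$, vanishing exterior data, and $q\ge 0$) to produce the unique adjoint solution $u_F\in\widetilde H^s(\Omega)$ of $(-\Delta)^su_F+qu_F=F$ in $\Omega$, $u_F=0$ in $\Omega_e$; equivalently $B_q(u_F,\phi)=\langle F,\phi\rangle$ for all $\phi\in\widetilde H^s(\Omega)$, where $B_q(u,v):=\int_{\R^n}(-\Delta)^{s/2}u\,(-\Delta)^{s/2}v\dx+\int_\Omega quv\dx$ is symmetric since $q$ is real. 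Note $\supp u_F\subset\overline\Omega$.

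The core computation would be to test the equation for $v_f$ against $u_F$. Since $v_f$ solves the linear problem with zero source and $u_F\in\widetilde H^s(\Omega)$ is an admissible test function, $B_q(v_f,u_F)=0$. Writing $v_f=(v_f-f)+f$ and using symmetry of $B_q$ gives $0=B_q(u_F,v_f-f)+B_q(u_F,f)=\ell(v_f-f)+\langle(-\Delta)^su_F,f\rangle$, where the second equality uses $\supp f\subset W\subset\Omega_e$, so that $\int_\Omega qu_Ff\dx=0$ and $\int_{\R^n}(-\Delta)^{s/2}u_F\,(-\Delta)^{s/2}f\dx=\langle(-\Delta)^su_F,f\rangle_{\R^n}$. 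Because $\ell$ annihilates the Runge set, I conclude $\langle(-\Delta)^su_F,f\rangle=0$ for every $f\in C^\infty_c(W)$, i.e.\ $(-\Delta)^su_F=0$ in $W$. Moreover $\overline W\cap\overline\Omega=\emptyset$ together with $\supp u_F\subset\overline\Omega$ gives $u_F=0$ in $W$.

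Finally I would invoke the UCP for the higher-order fractional Laplacian: $u_F\in H^s(\R^n)$ with $u_F=(-\Delta)^su_F=0$ on the nonempty open set $W$ forces $u_F\equiv 0$ in $\R^n$; therefore $F=(-\Delta)^su_F+qu_F=0$ in $(\widetilde H^s(\Omega))^*$, hence $\ell=0$, and the density follows. The hard part will be precisely this last appeal to the UCP: for $s\in(0,1)$ it is the now-classical fact underlying \cite{GSU20}, but for $s>1$ non-integer it is substantially more delicate and is a central result of \cite{CMR21}, obtained via Carleman/frequency-function estimates for a Caffarelli--Silvestre-type extension. A secondary point requiring care is the functional-analytic bookkeeping---identifying $\ell$ with $F$, constructing the adjoint solution $u_F$, and confirming $\supp u_F\subset\overline\Omega$ so that the separation $\overline W\cap\overline\Omega=\emptyset$ can be exploited.
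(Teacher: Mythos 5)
Your argument is correct and follows the standard Runge approximation proof via Hahn--Banach duality, the adjoint solution $u_F$, and the unique continuation property for $(-\Delta)^s$; this is exactly the approach taken in \cite[Lemma 5.4]{CMR21}, which the paper cites rather than reproves, so there is no in-paper proof to compare against. You correctly isolate the genuinely delicate ingredient --- the UCP for $s\in\R^+\setminus\mathbb Z$, $s>1$, established in \cite{CMR21} --- and the surrounding bookkeeping (identification $(\widetilde H^s(\Omega))^*\cong H^{-s}(\Omega)$, symmetry of $B_q$, $\supp u_F\subset\overline\Omega$ so that $u_F=0$ on $W$) is handled accurately.
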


Applying Lemma \ref{lemma_runge1 new}, one can relax the regularity in the integral identity \eqref{ID:integral identity higher-order new}.

\begin{prop}\label{prop_density_second new}
Let $k=2,\ldots, K$ and  $\alpha=e_{1}+\ldots+e_{k}$. Suppose that \eqref{hypothesis alpha} holds and all the hypotheses in Theorem~\ref{thm_main} hold. 

Then for any $h_0,\, h_{1},\ldots, h_{k} \in C^\infty_c(\Omega)$,
\begin{align}\label{ID:integral identity higher-order}
\int_\Omega \sum_{|\sigma|\le m} \widetilde{a}_{\sigma,k-1}(x)\LC \sum_{\pi\in S_k} h_{\pi_1}\ldots h_{\pi_{k-1}}D^\sigma h_{\pi_k}\RC h_0 \dx = 0.
\end{align}
\end{prop}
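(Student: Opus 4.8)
The plan is to upgrade the integral identity \eqref{ID:integral identity higher-order new}, which holds for the special solutions $v_0,v_1,\ldots,v_k$ of the linear fractional Schr\"odinger equations \eqref{eq_3_v_0 new} and \eqref{eq_3_linear vj} with exterior data supported in $W_2$ and $W_1$ respectively, to the analogous identity \eqref{ID:integral identity higher-order} with arbitrary test functions $h_0,h_1,\ldots,h_k\in C^\infty_c(\Omega)$. The natural tool is the Runge approximation property of Lemma~\ref{lemma_runge1 new}: for fixed $\ell$, the set $\{v_{f}-f:\,f\in C^\infty_c(W)\}$ is dense in $\widetilde H^s(\Omega)$, and since each $f_\ell$ is supported in $W_1$ (disjoint from $\overline\Omega$), on $\Omega$ we have $v_\ell=v_\ell-f_\ell$, so in fact $\{v_\ell|_\Omega:\,f_\ell\in C^\infty_c(W_1)\}$ is dense in $\widetilde H^s(\Omega)$; similarly $\{v_0|_\Omega:\,f_0\in C^\infty_c(W_2)\}$ is dense in $\widetilde H^s(\Omega)$.

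The first step is to fix $h_1,\ldots,h_k,h_0\in C^\infty_c(\Omega)$ and approximate them one at a time in the $\widetilde H^s(\Omega)$ norm by the special solutions. Because $\floor s>\max\{m,n/2\}$, the space $\widetilde H^s(\Omega)=H^s_{\overline\Omega}$ embeds continuously into $C^m(\overline\Omega)$ (indeed into $C^{\floor s}$), so convergence in $\widetilde H^s(\Omega)$ implies convergence in $C^m(\overline\Omega)$; this is what makes the multilinear expression $\sum_{|\sigma|\le m}\widetilde a_{\sigma,k-1}\big(\sum_{\pi\in S_k}v_{\pi_1}\cdots v_{\pi_{k-1}}D^\sigma v_{\pi_k}\big)v_0$ continuous with respect to each argument, since the derivatives $D^\sigma$ up to order $m$ and the products are all controlled in the sup norm on $\overline\Omega$ while $\widetilde a_{\sigma,k-1}\in C(\overline\Omega)\subset L^\infty(\Omega)$. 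The second step is then a routine telescoping argument: replace $v_1$ by a sequence converging to $h_1$, pass to the limit using the multilinear continuity and dominated convergence, then replace $v_2$ by a sequence converging to $h_2$, and so on through $v_k$ and finally $v_0$. Each passage to the limit is legitimate because all functions involved are supported in a fixed compact subset of $\R^n$ and are uniformly bounded in $C^m(\overline\Omega)$ along the approximating sequences, so the integrand is dominated by an $L^1(\Omega)$ function.

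One point that needs care is that the special solutions $v_\ell$ appearing in \eqref{ID:integral identity higher-order new} all solve the \emph{same} equation $[(-\Delta)^s+q]v_\ell=0$ with $q=q_1=q_2$ (this is why Lemma~\ref{lemma_recover q} was proved first), and likewise $v_0$ solves the adjoint problem with the same $q$; hence a single application of Lemma~\ref{lemma_runge1 new} with this common $q$ suffices for all indices, and one does not need to worry about mismatched solution operators. I would also note that the identity \eqref{ID:integral identity higher-order new} is symmetric under permutations of the roles of $v_1,\ldots,v_k$ through the sum over $S_k$, so the order in which one performs the approximations is immaterial; for definiteness I would simply do them in the order $v_1,v_2,\ldots,v_k,v_0$.

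The main obstacle is purely the bookkeeping of the multilinear limit: one must make sure that when approximating, say, $v_j$ by $v_j^{(n)}\to h_j$ in $\widetilde H^s(\Omega)$, the remaining factors $v_1^{(n)},\ldots$ (already replaced by earlier-converging sequences, hence bounded in $C^m$) and $v_{j+1},\ldots,v_k,v_0$ (still the original special solutions, hence fixed) are uniformly bounded so that the product and its $\le m$-th derivatives converge in, say, $L^\infty(\Omega)$, which combined with $\widetilde a_{\sigma,k-1}\in L^\infty(\Omega)$ and the finite measure of $\Omega$ gives convergence of the integral. Since $H^s_{\overline\Omega}\hookrightarrow C^m(\overline\Omega)$ is the only analytic input and it has already been used throughout the paper, this step is essentially soft and the proposition follows.
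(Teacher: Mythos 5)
Your proposal follows the same route as the paper: apply the Runge approximation Lemma~\ref{lemma_runge1 new} to the integral identity \eqref{ID:integral identity higher-order new}, and pass to the limit one slot at a time. The essential idea is therefore right. However, the technical justification you give for the limit passage is incorrect. You claim that $\widetilde{H}^s(\Omega)=H^s_{\overline\Omega}$ embeds continuously into $C^m(\overline\Omega)$, and even into $C^{\floor{s}}$, under the hypothesis $\floor{s}>\max\{m,n/2\}$. This is false: the embedding $H^s(\R^n)\hookrightarrow C^m(\R^n)$ requires the strictly stronger condition $s>m+n/2$. For example, with $n=4$, $m=2$, $s=3.5$ one has $\floor{s}=3>2=\max\{m,n/2\}$, yet $s-m=1.5<n/2=2$, so $D^\sigma u$ for $|\sigma|=2$ need not be bounded when $u\in H^{3.5}(\R^4)$. (The claimed $C^{\floor{s}}$ embedding is worse still, needing $s-\floor{s}>n/2$, which fails for all $n\ge 2$.) Consequently the inference that $\widetilde{H}^s(\Omega)$--convergence gives $C^m(\overline\Omega)$--convergence, and with it the uniform $C^m$ bounds on intermediate approximating sequences and the $L^\infty$ domination, does not hold as stated.

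The gap is local and fixable, and indeed the paper's proof implicitly uses the correct, weaker fact. Separate continuity of the multilinear functional in each argument with respect to $\widetilde{H}^s(\Omega)$ follows because $D^\sigma:H^s(\R^n)\to H^{s-|\sigma|}(\R^n)\subset L^2(\R^n)$ is bounded for $|\sigma|\le m<s$, so the Runge remainder satisfies $D^\sigma r_{j,i}\to 0$ in $L^2(\Omega)$ for every $|\sigma|\le m$; meanwhile all the other factors in each term are uniformly bounded on $\overline\Omega$, since the special solutions $v_\ell$ lie in $C^s(\R^n)$ (Proposition~\ref{prop_Holder_linear}, using $\floor{s}>\max\{m,n/2\}$), the already--substituted $h_j$ lie in $C^\infty_c(\Omega)$, and $\widetilde{a}_{\sigma,k-1}\in C(\overline{\Omega})$. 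An $L^2$--small factor times $L^\infty$ factors integrated over the bounded set $\Omega$ suffices, and one should fix each replaced argument at its limit $h_j$ before moving to the next slot, rather than carrying several approximating sequences simultaneously, so that no uniform $C^m$ bound on the sequences is ever needed. With that replacement your telescoping argument coincides with the paper's proof.
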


\begin{proof}
Given any $h_0, h_1, ..., h_k\in C_c^\infty(\Omega)$, by the Runge approximation property of Lemma \ref{lemma_runge1 new} there exist a sequence of functions $\{f_{0,i}\}_{i\in\mathbb{N}}\in C_c^\infty(W_2)$ and sequence of $\{f_{j,i}\}_{i\in\mathbb{N}}\in C_c^\infty(W_1)$ such that 
\[
v_{j,i}= f_{j,i}+h_j +r_{j,i},\quad\hbox{ for }j=0,\,1,\ldots, k,
\]
where  $v_{j,i}\in H^s(\R^n)\cap C^s(\R^n)$ solve the equation 
$$
(-\Delta)^s v_{j,i} + q(x) v_{j,i} =0 \hbox{ in }\Omega, 
$$
with boundary data $f_{j,i}$ so that 
$$
v_{j,i}-f_{j,i} \in \widetilde{H}^s(\Omega),
$$
and the remainder terms $r_{j,i}\to 0$ in $\widetilde{H}^{s}(\Omega)$ as $i\to \infty$.

Since $f_{0,i}$ is compactly supported in $W_2\subset \Omega_e$, we can recast \eqref{ID:integral identity higher-order new} as
\begin{align*}
0&= \int_\Omega \sum_{|\sigma|\le m} \widetilde{a}_{\sigma,k-1}(x)\LC \sum_{\pi\in S_k} v_{\pi_1}\ldots v_{\pi_{k-1}}D^\sigma v_{\pi_k}\RC (v_{0,i}-f_{0,i}) \dx \\
&=\int_\Omega \sum_{|\sigma|\le m} \underbrace{\widetilde{a}_{\sigma,k-1}(x)\LC \sum_{\pi\in S_k} v_{\pi_1}\ldots v_{\pi_{k-1}}D^\sigma v_{\pi_k}\RC}_{C(\overline\Omega)\subset L^2(\Omega)} \underbrace{(h_{0 }+r_{0,i})}_{L^2(\Omega)} \dx.
\end{align*}
Here we used the fact that $v_j\in C^s(\R^n)$ with $\floor{s}>\max\{m,\,n/2\}$, and $\widetilde{a}_{\sigma,k-1}(x)\in C(\overline{\Omega})$. Taking the limit $i\rightarrow \infty$, we get
\begin{align*}
0&=\int_\Omega \sum_{|\sigma|\le m} \widetilde{a}_{\sigma,k-1}(x)\LC \sum_{\pi\in S_k} v_{\pi_1}\ldots v_{\pi_{k-1}}D^\sigma v_{\pi_k}\RC h_{0 }  \dx.
\end{align*}

Next we replace $v_{\pi_1}$ in the above integral by $(v_{\pi_1,i}-f_{\pi_1,i})$. As $f_{j,i}$ ($j=1,\ldots, k$) is compactly supported in $W_1\subset \Omega_e$, we get
\begin{align*}
0&=\int_\Omega \sum_{|\sigma|\le m} \widetilde{a}_{\sigma,k-1}(x)\LC \sum_{\pi\in S_k}   (v_{\pi_1,i}-f_{\pi_1,i})v_{\pi_{2} }\ldots  v_{\pi_{k-1} }  D^\sigma v_{\pi_k}\RC h_{0}  \dx\\
&=\int_\Omega \sum_{|\sigma|\le m} \widetilde{a}_{\sigma,k-1}(x)\LC \sum_{\pi\in S_k}   (h_{\pi_1}+r_{\pi_1,i})v_{\pi_{2} }\ldots  v_{\pi_{k-1} }  D^\sigma v_{\pi_k}\RC h_{0 }  \dx,
\end{align*}
which, by taking $i\rightarrow\infty$, converges to 
\begin{align*}
\int_\Omega \sum_{|\sigma|\le m} \widetilde{a}_{\sigma,k-1}(x)\LC \sum_{\pi\in S_k}    h_{\pi_1} v_{\pi_{2} }\ldots  v_{\pi_{k-1} }  D^\sigma v_{\pi_k}\RC h_{0 }  \dx.
\end{align*}
Repeating the same procedure $k-1$ more times, we get the desired identity. 
\end{proof}

\subsection{Recovering the nonlinear coefficients}

The proof of Theorem \ref{thm_main} proceeds by induction: We first recover $a_{\sigma,1}$, $|\sigma|\le m$, and then apply the inductive step in order to recover $a_{\sigma,k}$, for all $|\sigma|\le m$ and $2\le k\le K-1$.
\begin{proof}[Proof of Theorem \ref{thm_main}] 

{\bf Step 1 (Recovery of $a_{\sigma,1}$):}
By taking $k=2$ in Proposition~\ref{prop_density_second new}, we have that
\begin{align}\label{eq_pf_main_1}
0 = \int_\Omega \sum_{|\sigma|\le m} \widetilde{a}_{\sigma,1}(x) \Big( h_{1}D^\sigma h_{{2}}+{h_{2}}D^\sigma{h_{1}} \Big) h_0 \dx.
\end{align}
 
In the spirit of
\cite[Section 3]{CMRU22}, 
we take $h_0\in C_c^\infty(\Omega)$ and choose
$h_1 = h_2 = 1$ on $\supp (h_0)\subseteq \Omega$. The identity \eqref{eq_pf_main_1} yields 
\[
0= \int_\Omega \widetilde{a}_{0,1}(x)h_0 \dx,
\]
which then gives $\widetilde{a}_{0,1}=0$ in $\Omega$, since $h_0$ is arbitrary. Thus, \eqref{eq_pf_main_1} becomes
\begin{equation}\label{eq_pf_main_2}
0 = \int_\Omega \sum_{0<|\sigma|\le m} \widetilde{a}_{\sigma,1}(x) \Big( h_{1}D^\sigma h_{{2}}+{h_{2}}D^\sigma{h_{1}} \Big) h_0 \dx.
\end{equation}

Now we argue by induction for $\sigma$. Fix $j\in \{0,\,1,\ldots, m-1\}$, and assume that $\widetilde{a}_{\sigma,1} = 0 $ in $\Omega$ for $|\sigma|\le j$. We now show that $\widetilde{a}_{\sigma,1} = 0 $ in $\Omega$ when $|\sigma|=j+1$.
In order to do so, we take $h_0\in C_c^\infty(\Omega)$ and choose
$h_1(x)= x^\sigma$ and $ h_2(x) = 1$ on $\supp (h_0)\subseteq \Omega$. The induction assumption together with \eqref{eq_pf_main_2} implies that
\[
0= \int_\Omega \sum_{j+1 \le|\beta|\le m} \widetilde{a}_{\beta,1}(x) (D^\beta x^\sigma) h_0 \dx=:\int_\Omega \widetilde{a}_{\sigma,1}(x) h_0 \dx+I_j,
\]

where 
\[
\begin{aligned}
I_j:=\int_\Omega\Big(\sum_{j+1<|\beta|\le m} \widetilde{a}_{\beta,1}(x) (D^\beta x^\sigma) h_0 +\sum_{|\beta|=j+1,\beta\neq\sigma} \widetilde{a}_{\beta,1}(x) (D^\beta x^\sigma) h_0 \Big)\dx.
\end{aligned}
\]
In fact, the term $I_j$ vanishes, since there must be a $\beta_k$ in $\beta= (\beta_1, \ldots, \beta_n)$ that is strictly larger than $\sigma_k$ in $\sigma=(\sigma_1,\ldots, \sigma_n)$ for some $k$ such that $D^\beta x^\sigma=0$. Combining it with the fact that $h_0\in C^\infty_0(\Omega)$ is arbitrary, we deduce  
$$
\widetilde{a}_{\sigma
,1} =0\quad \hbox{in $\Omega$, \quad for $|\sigma|=j+1$}.
$$
Lastly, by induction, we obtain $\widetilde{a}_{\sigma,1} = 0$ in $\Omega$ for all $|\sigma|\le m$. Therefore $a_{\sigma,1}^{(1)}=a_{\sigma,1}^{(2)}$ in $\Omega$ for all $|\sigma|\le m$.  
\end{proof}

{\bf Step 2 (Recovery of $a_{\sigma, k}$, $k=2,\ldots,K-1$):}
To recover $a_{\sigma,2}$, we have by taking $k=3$ in Proposition \ref{prop_density_second new} that 
\begin{align}\label{eq_pf_main_3}
0 = \int_\Omega \sum_{|\sigma|\le m} \widetilde{a}_{\sigma,2}(x) \Big( h_{1}h_{2}D^\sigma h_{3}+h_{1}h_{3}D^\sigma h_{2}+h_{2}h_{3}D^\sigma{h_{1}} \Big) h_0 \dx.
\end{align}
Following a similar strategy as in step 1, we first prove $\widetilde{a}_{0,2}=0$ in $\Omega$. By taking any fixed $h_0\in C_c^\infty(\Omega)$ and choosing
$h_1 = h_2=h_3 = 1$ on $\supp (h_0)\subseteq \Omega$, the above identity yields 
\[
0= \int_\Omega \widetilde{a}_{0,2}(x)h_0 \dx.
\]
which leads to the uniqueness $a_{0,2}^{(1)}=a_{0,2}^{(2)}$.  
With this, now we have 
\[
0 = \int_\Omega \sum_{0<|\sigma|\le m} \widetilde{a}_{\sigma,2}(x) \Big( h_{1}h_{2}D^\sigma h_{3}+h_{1}h_{3}D^\sigma h_{2}+h_{2}h_{3}D^\sigma{h_{1}} \Big) h_0 \dx.
\]
We then take $h_0\in C_c^\infty(\Omega)$ and choose
$h_3 = 1$ on $\supp (h_0)\subseteq \Omega$. The above identity implies 
\[ 
\int_\Omega\sum_{|\sigma|\le m} \widetilde{a}_{\sigma,2}(x) \Big( h_1D^\sigma{h_{2}}+h_2D^\sigma{h_{1}} \Big) h_0 \dx = 0.
\]
Applying the same argument as in Step 1 for $\widetilde{a}_{\sigma,1}$ yields
$$
\widetilde{a}_{\sigma
,2} =0\quad \hbox{in $\Omega$, \quad for $0\le |\sigma|\leq m$}.
$$

Similarly, to recover $a_{\sigma,3}$, we first take $h_1=h_2=h_3=h_4=1$ on $\supp (h_0)\subseteq \Omega$ to recover $\widetilde{a}_{0,4}$. Then we take
$h_3 =h_4= 1$ on $\supp (h_0)\subseteq \Omega$ to get 
\[ 
\int_\Omega\sum_{|\sigma|\le m} \widetilde{a}_{\sigma,3}(x) \Big( h_1D^\sigma{h_{2}}+h_2D^\sigma{h_{1}} \Big) h_0 \dx = 0,
\]
from which we can deduce
$$
\widetilde{a}_{\sigma
,3} =0\quad \hbox{in $\Omega$, \quad for $1\le|\sigma|\leq m$}.
$$
The proof is completed similarly for the remaining coefficients $a_{\sigma, k}$. 

\section{Acknowledgments}\label{sec:ack}
The authors would like to thank Prof. Gerd Grubb for her prompt, generous help and detailed guidance, including providing several inspiring references and notes and directing us to the regularity estimate in \cite{Grubb}.

The authors are also grateful to Prof. Angkana R\"uland for many helpful discussions. 
R.-Y. Lai is partially supported by the National Science Foundation through the grant DMS-2306221. G. Covi is partially supported by the FAME flagship of the Research Council of Finland (grant 359186). L. Yan is partially supported by the AMS-Simons Travel Grant.

\appendix
\section{Proof of Proposition \ref{prop_linearization}}

Let us define functions $\mathcal{T}_k$,  $\widetilde{\mathcal{T}}_k$, $V_k$, $\mathcal{R}_\varepsilon$, and $U_k$ as follows:
\begin{align*}
&\mathcal{T} _k:= \sum_{|\alpha|=k} \frac{\varepsilon^\alpha \mathcal{T}_\alpha}{\alpha!} , \quad V_k : = \sum_{|\alpha|=k} \frac{\varepsilon^\alpha w_\alpha}{\alpha!},\\
& \widetilde{\mathcal{T}}_k = \mathbf{P}(u_\varepsilon) - \mathcal{T}_0- \mathcal{T}_1-\ldots - \mathcal{T}_k,\\
&\mathcal{R}_\varepsilon :=\underbrace{\underbrace{ \underbrace{\underbrace{\underbrace{u_\varepsilon}_{U_0} - \sum_{|\alpha|=1}  \frac{\varepsilon^\alpha w_\alpha}{\alpha!} }_{  U_1}
-
\sum_{|\alpha|=2}  \frac{\varepsilon^\alpha w_\alpha}{\alpha!}}_{ U_2}
- 
\sum_{|\alpha|=3}  \frac{\varepsilon^\alpha w_\alpha}{\alpha!}}_{U_3} - 
\sum_{|\alpha|=4}  \frac{\varepsilon^\alpha w_\alpha}{\alpha!}-\ldots - \sum_{|\alpha|=K}  \frac{\varepsilon^\alpha w_\alpha}{\alpha!}}_{U_K}.
\end{align*}
We see that the functions $V_k$, $U_k$ belong to $ H^s(\R^n)\cap C^s(\R^n)$. In general,
\begin{align}\label{DEF:Uk}
U_{k+1}:= U_{k}  
- \sum_{|\alpha|=k+1}  \frac{\varepsilon^\alpha w_\alpha}{\alpha!} = U_k-V_{k+1},\qquad\qquad \mbox{for } k=0,\,1,\ldots, K-1.
\end{align}

Recall that the function $w_\alpha$ is the solution to $[(-\Delta)^s   + q(x)]w_{\alpha} =- \mathcal{T}_{\alpha}$ with exterior data $w_{\alpha}|_{\Omega_e} =  D^\alpha_\varepsilon (\varepsilon\cdot f)|_{\varepsilon=0}$. 
Hence, from \eqref{eq_3_linear_1}, $V_1$ solves the following Dirichlet problem:
\begin{align}\label{EQU_V_1}
\begin{cases}
[(-\Delta)^s +q(x) ]V_1  = 0 &\hbox{ in } \Omega,\\
V_1 = \varepsilon\cdot f &\hbox{ in } \Omega_e,
\end{cases}
\end{align}
and from \eqref{eq_3_linear_w}, $V_k$, $2\le k\le K$, solves the following Dirichlet problem:
\begin{align}\label{EQU_V general}
\begin{cases}
[(-\Delta)^s +q(x) ]V_k  = -{\mathcal{T}}_k &\hbox{ in } \Omega,\\
V_k = 0 &\hbox{ in } \Omega_e.
\end{cases}
\end{align}
Applying Proposition~\ref{prop_wellposedness_linear} and Proposition~\ref{prop_Holder_linear} yields
\begin{equation}\label{EST: V_1 V_k}
\begin{aligned}
&\|V_1\|_{C^s(\R^n)}+\|V_1\|_{H^s(\R^n)}\leq C\|\varepsilon\cdot f\|_{C^\infty_c(\Omega_e)}, \\
&\|V_k\|_{C^s(\R^n)}+\|V_k\|_{H^s(\R^n)}\leq C\|{\mathcal{T}}_k\|_{L^\infty(\Omega)},\quad 2\leq k\leq K.
\end{aligned}
\end{equation}

Moreover, the function $U_j$ solves
\begin{align}\label{EQU_U general}
\begin{cases}
[(-\Delta)^s +q(x) ]U_j  = -\widetilde{\mathcal{T}}_j &\hbox{ in } \Omega,\\
U_j = 0 &\hbox{ in } \Omega_e,
\end{cases}
\end{align}
and thus Proposition~\ref{prop_wellposedness_linear} and Proposition~\ref{prop_Holder_linear} imply 
\begin{equation}\label{EST:U_j}  
\begin{aligned}
&\|U_1\|_{C^s(\R^n)}+\|U_1\|_{H^s(\R^n)}
\le  C\|\mathbf{P}(u_\varepsilon) \|_{L^\infty(\Omega)},\\
&\|U_j\|_{C^s(\R^n)}+\|U_j\|_{H^s(\R^n)}
\le  C\|\widetilde{\mathcal{T}}_{j}\|_{L^\infty(\Omega)}. 
\end{aligned}
\end{equation}

It is clear that $U_K\equiv \mathcal{R}_\varepsilon$ solves \eqref{EQU_R}. Due to the well-posedness of the problem \eqref{EQU_R}, we have the estimate
\begin{align*}
\|\mathcal{R}_\varepsilon\|_{C^{s}(\R^n)}+\|\mathcal{R}_\varepsilon\|_{H^{s}(\R^n)}\le  C\|\widetilde{\mathcal{T}}_{K}\|_{L^\infty(\Omega)}.
\end{align*}
To obtain \eqref{EST_R}, it suffices to show that 
\begin{equation}
\label{eq_tilde_T}
\|\widetilde{\mathcal{T}}_{K}\|_{L^\infty(\Omega)}\le C\|\varepsilon\cdot f\|_{C^\infty_c(\Omega_e)}^{K+1}.
\end{equation}

In order to do so, we first reformulate $\mathcal{T}_k$ by using the definition of $\mathcal{T}_\alpha$ in \eqref{eq_3_def_T}. Note that $\mathcal{T}_0=\mathcal{T}_1=0$. For $2\le k\le K$, we have
\begin{align*}
\mathcal T_{k} &= \sum_{|\alpha|=k}\frac{\varepsilon^\alpha\mathcal T_\alpha}{\alpha!}
= \sum_{|\alpha|=k}\frac{\varepsilon^\alpha}{\alpha!}\sum^{|\alpha|-1}_{\ell=1} \sum_{\substack{ \beta_1,\ldots, \beta_\ell\in \mathbb{N}^K_0\setminus(0,\cdots,0)  : \\ \beta:= \sum_{j=1}^\ell\beta_j < \alpha }}\binom{\alpha}{\beta} \binom{\beta}{\beta_1, ..., \beta_\ell}\bigg( \prod_{j=1}^\ell w_{\beta_j } \bigg)    P_\ell(x,D)w_{\alpha-\beta}  
\\ & =
\sum^{k-1}_{\ell=1}\sum_{|\alpha|=k}\sum_{\substack{ \beta_1,\ldots, \beta_{\ell+1}\in \mathbb{N}^K_0\setminus(0,\cdots,0) : \\ \sum_{j=1}^{\ell+1}\beta_j = \alpha }} \bigg( \prod_{j=1}^\ell \frac{\varepsilon^{\beta_j} w_{\beta_j }}{\beta_j!} \bigg)    P_\ell(x,D)\bigg( \frac{\varepsilon^{\beta_{\ell+1}}w_{\beta_{\ell+1}}}{\beta_{\ell+1}!}\bigg).
\end{align*}
Here in the last step, we reordered the sum and denoted $\beta_{\ell+1} := \alpha - \beta$. By taking $\gamma_j = |\beta_j|$ and using the definition of the quantities $V_k$, we can write 
\begin{equation}\label{eq_T_k}
\begin{aligned}
\mathcal T_{k} 
& =
\sum^{k-1}_{\ell=1} \sum_{\substack{ \gamma_1,\ldots, \gamma_{\ell+1}\in \mathbb{N} : \\ \sum_{j=1}^{\ell+1}\gamma_j = k }}  \bigg( \prod_{j=1}^\ell\sum_{|\beta_j|=\gamma_j}  \frac{\varepsilon^{\beta_j} w_{\beta_j }}{\beta_j!} \bigg)    P_\ell(x,D)\bigg( \sum_{|\beta_{\ell+1}|=\gamma_{\ell+1}}\frac{\varepsilon^{\beta_{\ell+1}}w_{\beta_{\ell+1}}}{\beta_{\ell+1}!}\bigg) \\
& = \sum^{k-1}_{\ell=1} \sum_{\substack{ \gamma_1,\ldots, \gamma_{\ell+1}\in \mathbb{N} : \\ \sum_{j=1}^{\ell+1}\gamma_j = k }}  \bigg( \prod_{j=1}^\ell V_{\gamma_j} \bigg)    P_\ell(x,D)V_{\gamma_{\ell+1}}.
\end{aligned}
\end{equation}

In particular, as $\gamma_j\geq 1$, for $k=2$, we have
$$
\mathcal T_{2} =V_1P_1(x,D)V_1,\quad \hbox{ where $V_1=U_0-U_1$}, 
$$
which satisfies 
\begin{align*}
\|\mathcal{T}_2\|_{L^\infty(\Omega)} &\leq C \| V_1\|_{L^\infty(\Omega)}\|P_1(x,D)V_1\|_{L^\infty(\Omega)}\\
&\leq C \|V_1\|_{C^s(\R^n)}^2 =C \|\varepsilon\cdot f\|_{L^\infty(\Omega)}^2
= \mathcal{O}(\|\varepsilon\cdot f\|^2_{C_c^\infty(\Omega_e)}),
\end{align*}
by recalling $\floor{s}>\max\{m,\,n/2\}$. Here the notation $f=\mathcal{O}(g)$ represents that $f$ is of the order of $g$. From \eqref{EST: V_1 V_k}, it also implies
$$
    \|V_2\|_{C^s(\R^n)}+\|V_2\|_{H^s(\R^n)}\leq C\|{\mathcal{T}}_2\|_{L^\infty(\Omega)} = \mathcal{O}(\|\varepsilon\cdot f\|^2_{C_c^\infty(\Omega_e)}).
$$
We shall need the following lemmas. 
\begin{lemma}\label{prop_est_V&T}
For $1\le k\le K$, we have 
\begin{equation}\label{eq_est_V&T}
\|\mathcal{T}_k\|_{L^\infty(\Omega)} = \mathcal{O}(\|\varepsilon\cdot f\|^k_{C_c^\infty(\Omega_e)}),\quad
\|V_k\|_{C^s(\R^n)}+\|V_k\|_{H^s(\R^n)}
= \mathcal{O}(\|\varepsilon\cdot f\|^k_{C_c^\infty(\Omega_e)}).
\end{equation}
\end{lemma}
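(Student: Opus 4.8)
The plan is to prove Lemma \ref{prop_est_V&T} by induction on $k$, using the recursive structure of $\mathcal{T}_k$ exhibited in \eqref{eq_T_k} together with the stability estimates \eqref{EST: V_1 V_k}. The base cases $k=1$ and $k=2$ are already in hand: $\mathcal{T}_1=0$ trivially satisfies the bound, $\|V_1\|_{C^s}+\|V_1\|_{H^s}\le C\|\varepsilon\cdot f\|_{C_c^\infty(\Omega_e)}$ by \eqref{EST: V_1 V_k}, and the computation immediately preceding the lemma gives $\|\mathcal{T}_2\|_{L^\infty(\Omega)}=\mathcal{O}(\|\varepsilon\cdot f\|_{C_c^\infty(\Omega_e)}^2)$ and hence $\|V_2\|_{C^s}+\|V_2\|_{H^s}=\mathcal{O}(\|\varepsilon\cdot f\|_{C_c^\infty(\Omega_e)}^2)$.

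For the inductive step, suppose the estimates \eqref{eq_est_V&T} hold for all indices strictly less than $k$, where $3\le k\le K$. I would start from the closed form
\[
\mathcal{T}_k=\sum_{\ell=1}^{k-1}\ \sum_{\substack{\gamma_1,\ldots,\gamma_{\ell+1}\in\mathbb{N}:\\ \sum_{j=1}^{\ell+1}\gamma_j=k}}\bigg(\prod_{j=1}^{\ell}V_{\gamma_j}\bigg)P_\ell(x,D)V_{\gamma_{\ell+1}},
\]
and estimate each summand in $L^\infty(\Omega)$. Since every $\gamma_j\ge 1$ and $\sum_{j=1}^{\ell+1}\gamma_j=k$ with $\ell+1\ge 2$, each individual $\gamma_j$ is at most $k-1$, so the inductive hypothesis applies to every factor. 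Using $\floor{s}>\max\{m,n/2\}$, the Sobolev embedding gives $C^{s-m}(\R^n)\subset C^{s-\floor{s}}\cap L^\infty$, so $\|P_\ell(x,D)V_{\gamma_{\ell+1}}\|_{L^\infty(\Omega)}\le C'\|V_{\gamma_{\ell+1}}\|_{C^s(\R^n)}$, where $C'$ absorbs $\sum_{|\sigma|\le m}\|a_{\sigma,\ell}\|_{C(\overline\Omega)}$; likewise $\|V_{\gamma_j}\|_{L^\infty(\Omega)}\le C\|V_{\gamma_j}\|_{C^s(\R^n)}$. Multiplying and invoking the inductive bounds yields that each summand is $\mathcal{O}\big(\|\varepsilon\cdot f\|_{C_c^\infty(\Omega_e)}^{\gamma_1+\cdots+\gamma_{\ell+1}}\big)=\mathcal{O}\big(\|\varepsilon\cdot f\|_{C_c^\infty(\Omega_e)}^{k}\big)$. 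Since the number of terms depends only on $k$ and $K$ (not on $\varepsilon$), summing preserves the order, giving $\|\mathcal{T}_k\|_{L^\infty(\Omega)}=\mathcal{O}(\|\varepsilon\cdot f\|_{C_c^\infty(\Omega_e)}^{k})$. Feeding this into the second line of \eqref{EST: V_1 V_k} immediately gives $\|V_k\|_{C^s(\R^n)}+\|V_k\|_{H^s(\R^n)}\le C\|\mathcal{T}_k\|_{L^\infty(\Omega)}=\mathcal{O}(\|\varepsilon\cdot f\|_{C_c^\infty(\Omega_e)}^{k})$, closing the induction.

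I expect the only genuinely delicate point to be bookkeeping the powers of $\|\varepsilon\cdot f\|_{C_c^\infty(\Omega_e)}$ correctly — i.e.\ checking that in each summand the product of the orders of the factors is exactly $\sum_j\gamma_j=k$, and that the constants (both the embedding constants and the combinatorial count of multi-indices $\beta_j$ with $|\beta_j|=\gamma_j$) are finite and $\varepsilon$-independent. Everything else is a routine application of the triangle inequality, the Sobolev/Hölder embeddings valid under $\floor{s}>\max\{m,n/2\}$, and the linear stability estimates of Propositions \ref{prop_wellposedness_linear} and \ref{prop_Holder_linear} already recorded in \eqref{EST: V_1 V_k}. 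One should also note explicitly that the passage from $\varepsilon^{\beta_j}w_{\beta_j}/\beta_j!$ summed over $|\beta_j|=\gamma_j$ back to $V_{\gamma_j}$ is exactly the definition of $V_{\gamma_j}$, which is what makes the reindexing in \eqref{eq_T_k} legitimate and lets the inductive hypothesis be applied factor-by-factor.
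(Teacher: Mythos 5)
Your proposal is correct and follows essentially the same strategy as the paper: strong induction on $k$ with base cases $k=1,2$, estimating $\mathcal{T}_k$ in $L^\infty$ term-by-term from the closed form \eqref{eq_T_k} using $C^s$ bounds on the factors $V_{\gamma_j}$ (each with $\gamma_j<k$ so the inductive hypothesis applies), and then invoking \eqref{EST: V_1 V_k} to pass from $\mathcal{T}_k$ to $V_k$. Your added remark making the $\floor{s}>\max\{m,n/2\}$ embedding explicit is a reasonable elaboration of a step the paper leaves implicit, but it is not a different route.
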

\begin{proof}
We proceed by applying the induction argument. Based on the above discussion, the base cases $k=1,\,2$ hold. Suppose that for any $b\in \{2,\ldots, K-1\}$, the following are true for all $a \leq b$:
\begin{align}\label{induction:T}
\| \mathcal{T}_{a}\|_{L^\infty(\Omega)}= \mathcal{O}(\|\varepsilon \cdot f\|^{a}_{C^\infty_c(\Omega_e)}),\quad \|V_{a}\|_{C^s(\R^n)}+\|V_a\|_{H^s(\R^n)}
\le  C\|\varepsilon \cdot f\|^{a}_{C^\infty_c(\Omega_e)}.
\end{align} 
We now show that this also holds for $b+1$. By \eqref{eq_T_k} and the induction hypothesis, we obtain  
\begin{align*}
\|\mathcal{T}_{b+1}\|_{L^\infty(\Omega)} &= \|\sum^{b}_{\ell=1} \sum_{\substack{ \gamma_1,\ldots, \gamma_{\ell+1}\in \mathbb{N} : \\ \sum_{j=1}^{\ell+1}\gamma_j = b+1 }}  \bigg( \prod_{j=1}^\ell V_{\gamma_j} \bigg)    P_\ell(x,D)V_{\gamma_{\ell+1}}\|_{L^\infty(\Omega)}\\
& \le \sum^{b}_{\ell=1} \sum_{\substack{ \gamma_1,\ldots, \gamma_{\ell+1}\in \mathbb{N} : \\ \sum_{j=1}^{\ell+1}\gamma_j = b+1 }}  \bigg( \prod_{j=1}^\ell \|V_{\gamma_j}\|_{C^s(\R^n)} \bigg)    \|P_\ell(x,D)V_{\gamma_{\ell+1}}\|_{C(\R^n)}\\
& \le \sum^{b}_{\ell=1} \sum_{\substack{ \gamma_1,\ldots, \gamma_{\ell+1}\in \mathbb{N} : \\ \sum_{j=1}^{\ell+1}\gamma_j = b+1 }}   \prod_{j=1}^{\ell+1} \|V_{\gamma_j}\|_{C^s(\R^n)}  = \mathcal{O}(\|\varepsilon\cdot f\|^{b+1}_{C_c^\infty(\Omega_e)}).
\end{align*}
In the last step above, we used the fact that $\mathcal{T}_{b+1}$ only contains $V_{\gamma_j}$ for $1\le\gamma_j\le b$ and thus \eqref{induction:T} implies $V_{\gamma_j}$ is of order $\gamma_j$. Finally, \eqref{EST: V_1 V_k} gives  
\[
\|V_{b+1}\|_{C^{s}(\R^n)} +\|V_{b+1}\|_{H^s(\R^n)}\le C\|\mathcal{T}_{b+1}\|_{L^\infty(\Omega)}= \mathcal{O}(\|\varepsilon\cdot f\|^{b+1}_{C_c^\infty(\Omega_e)}). 
\]
This completes the induction and hence the proof of \eqref{eq_est_V&T}.
\end{proof}

Based on our definitions, we have that $\mathcal{R}_\varepsilon=U_K$. The second lemma will prove that $\mathcal{R}_\varepsilon$ is indeed of order $K+1$, which justifies \eqref{eq_tilde_T}.

\begin{lemma}\label{prop_est_U&Tilde T}
For $0\le k\le K$, we have
\begin{equation}\label{eq_est_U&Tilde T}
\|\widetilde{\mathcal{T}}_k\|_{L^\infty(\Omega)} = \mathcal{O}(\|\varepsilon\cdot f\|^{k+1}_{C_c^\infty(\Omega_e)}),\quad
\|U_k\|_{C^s(\R^n)}+ \|U_k\|_{H^s(\R^n)}= \mathcal{O}(\|\varepsilon\cdot f\|^{k+1}_{C_c^\infty(\Omega_e)}).
\end{equation}
\end{lemma}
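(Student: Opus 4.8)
The plan is to prove both bounds in \eqref{eq_est_U&Tilde T} simultaneously, by induction on $k$, using as input the companion estimates for $\mathcal{T}_k$ and $V_k$ from Lemma~\ref{prop_est_V&T} and the linear elliptic estimates of Propositions~\ref{prop_wellposedness_linear} and~\ref{prop_Holder_linear}. For the base case $k=0$ I would observe that $\widetilde{\mathcal{T}}_0=\mathbf{P}(u_\varepsilon)$ since $\mathcal{T}_0=0$; because each monomial $u_\varepsilon^j P_j(x,D)u_\varepsilon$ carries at least two factors of $u_\varepsilon$ and the coefficients $a_{\sigma,j}$ are bounded, the estimate $\|u_\varepsilon\|_{C^s(\R^n)}=\mathcal{O}(\|\varepsilon\cdot f\|_{C^\infty_c(\Omega_e)})$ from \eqref{eq_3_linear_u_est} gives $\|\widetilde{\mathcal{T}}_0\|_{L^\infty(\Omega)}=\mathcal{O}(\|\varepsilon\cdot f\|_{C^\infty_c(\Omega_e)}^2)$, which in particular is $\mathcal{O}(\|\varepsilon\cdot f\|_{C^\infty_c(\Omega_e)}^{0+1})$; and the bound for $U_0=u_\varepsilon$ is just \eqref{eq_3_linear_u_est}.

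For the inductive step, assuming \eqref{eq_est_U&Tilde T} at level $k$ and proving it at level $k+1$ for $0\le k\le K-1$, the heart of the matter is to show $\|\widetilde{\mathcal{T}}_{k+1}\|_{L^\infty(\Omega)}=\mathcal{O}(\|\varepsilon\cdot f\|_{C^\infty_c(\Omega_e)}^{k+2})$; the bound for $U_{k+1}$ then follows at once, since $U_{k+1}$ solves \eqref{EQU_U general} with source $-\widetilde{\mathcal{T}}_{k+1}\in L^\infty(\Omega)$ and zero exterior data, so Propositions~\ref{prop_wellposedness_linear} and~\ref{prop_Holder_linear} yield $\|U_{k+1}\|_{C^s(\R^n)}+\|U_{k+1}\|_{H^s(\R^n)}\le C\|\widetilde{\mathcal{T}}_{k+1}\|_{L^\infty(\Omega)}$. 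To estimate $\widetilde{\mathcal{T}}_{k+1}=\mathbf{P}(u_\varepsilon)-\sum_{r=0}^{k+1}\mathcal{T}_r$, I would use the telescoping identity \eqref{DEF:Uk} to write $u_\varepsilon=V_1+\cdots+V_k+U_k$, substitute this into $\mathbf{P}(u_\varepsilon)=\sum_{\ell=1}^{K-1}u_\varepsilon^\ell P_\ell(x,D)u_\varepsilon$, and expand multilinearly, so that each resulting summand becomes a product of $\ell+1$ factors, each being some $V_r$ with $1\le r\le k$ or a copy of $U_k$.

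I would then split the summands into those built only from $V$-factors and those carrying at least one factor $U_k$, and match the first group against the closed form \eqref{eq_T_k} of $\mathcal{T}_r$ (recalling $\mathcal{T}_0=\mathcal{T}_1=0$). The point is that $\mathbf{P}(u_\varepsilon)-\sum_{r=0}^{k+1}\mathcal{T}_r$ consists exactly of the $V$-only products of total weight $\gamma_1+\cdots+\gamma_{\ell+1}\ge k+2$ --- this includes every product with $\ell\ge k+1$, whose weight is automatically $\ge\ell+1\ge k+2$ --- together with all products carrying a factor $U_k$. Since $m\le\floor{s}<s$ and the coefficients of $P_\ell$ are continuous on $\overline\Omega$, one has $\|P_\ell(x,D)g\|_{L^\infty(\Omega)}\le C\|g\|_{C^s(\R^n)}$, so every monomial is controlled by the product of the $C^s(\R^n)$-norms of its factors; Lemma~\ref{prop_est_V&T} turns $\prod_j\|V_{\gamma_j}\|_{C^s(\R^n)}$ into $\mathcal{O}(\|\varepsilon\cdot f\|_{C^\infty_c(\Omega_e)}^{\gamma_1+\cdots+\gamma_{\ell+1}})$, which is $\mathcal{O}(\|\varepsilon\cdot f\|_{C^\infty_c(\Omega_e)}^{k+2})$ on the surviving $V$-only part, while a monomial with $p\ge1$ factors $U_k$ is bounded, by the induction hypothesis, by $\mathcal{O}(\|\varepsilon\cdot f\|_{C^\infty_c(\Omega_e)}^{p(k+1)+\cdots})$, an exponent that is $\ge(k+1)+1=k+2$ because $\ell+1\ge2$ leaves room for at least one further factor. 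Summing the finitely many contributions then gives the desired bound.

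The step I expect to be the main obstacle is this combinatorial bookkeeping: one must check carefully that, after the substitution $u_\varepsilon=V_1+\cdots+V_k+U_k$, the multilinear expansion of $\mathbf{P}(u_\varepsilon)$ reproduces $\sum_{r=0}^{k+1}\mathcal{T}_r$ exactly, up to a remainder each of whose monomials has weight at least $k+2$ when one assigns weight $r$ to $V_r$ and weight $k+1$ to $U_k$. The delicate points are the mismatches of index ranges --- in $\mathbf{P}(u_\varepsilon)$ the index $\ell$ runs up to $K-1$, whereas in $\sum_{r=0}^{k+1}\mathcal{T}_r$ it is effectively capped at $k$, and \eqref{eq_T_k} carries the constraint $\gamma_1+\cdots+\gamma_{\ell+1}\le k+1$ (equivalently, each $\gamma_j\le k$) --- but in every case the mismatched terms have weight $\ge k+2$ and are harmless. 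Once this is settled, the remaining estimates are the same Sobolev and Hölder embeddings used throughout Section~\ref{sec:pre}, together with Lemma~\ref{prop_est_V&T}.
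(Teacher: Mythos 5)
Your proof is correct and takes a genuinely different route from the paper's. The paper fixes each $\ell$ and shows $U_0^\ell P_\ell(x,D) U_0 - \mathrm I_\ell = \mathcal O(\|\varepsilon\cdot f\|^{J+2}_{C_c^\infty(\Omega_e)})$ by a nested telescoping argument: after reindexing via partial sums $\sigma_j = \gamma_1+\cdots+\gamma_j$, it repeatedly uses $\sum_b V_{b+1-\sigma} = U_0 - U_{J+1-\sigma}$ to trade the $V$-factors in $\mathrm I_\ell$ for copies of $U_0$, peeling off one error term $U_{J+1-\sigma}$ per iteration through the auxiliary quantities $\mathrm I_\ell, \mathrm I_{\ell,1},\ldots$, and the proof is therefore carried by strong induction over all $U_j$ with $j\le J$. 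You instead substitute $u_\varepsilon = V_1+\cdots+V_k+U_k$ at the outset (using the telescoping identity \eqref{DEF:Uk}), expand $\mathbf P(u_\varepsilon)$ multilinearly, and observe that the $V$-only monomials of total weight $\le k+1$ reproduce $\sum_{r\le k+1}\mathcal T_r$ exactly — this follows from \eqref{eq_T_k} upon interchanging the order of the $r$- and $\ell$-sums; what remains is a finite collection of monomials each of weight $\ge k+2$, handled directly by Lemma~\ref{prop_est_V&T} together with the single level-$k$ bound $\|U_k\|=\mathcal O(\|\varepsilon\cdot f\|^{k+1}_{C_c^\infty(\Omega_e)})$. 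The two arguments are dual (the paper reconstitutes $U_0$ from $V$'s inside $\mathrm I_\ell$; you disassemble $U_0$ before expanding), but yours front-loads the cancellation into a single exact combinatorial matching, avoids the iterated peeling, and only needs ordinary rather than strong induction, which is a genuine streamlining. One small imprecision: the parenthetical claim that $\sum\gamma_j\le k+1$ is \emph{equivalent} to each $\gamma_j\le k$ should read that it \emph{implies} it (given $\gamma_j\ge 1$ and $\ell+1\ge 2$); only the implication is used, so the argument stands as written.
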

\begin{proof}
We proceed again by induction. From the definition $U_0=u_\varepsilon$, it is clear that $U_0$ is of order $1$.
As $\mathcal{T}_0=\mathcal{T}_1=0$, we have $\widetilde{\mathcal{T}}_0=\widetilde{\mathcal{T}}_1=\mathbf{P}(U_0)=\sum_{\ell = 1}^{K-1}U_0^{\ell}P_\ell(x,D)U_0$.
From the estimate
\begin{align*}
\|\mathbf{P}(U_0)\|_{L^\infty(\Omega)}\leq \sum_{\ell = 1}^{K-1}\|U_0\|_{C^{s}(\R^n)}^{\ell}\|P_\ell(x,D)U_0\|_{C(\R^n)} \le \sum_{\ell = 1}^{K-1}\|U_0\|_{C^{s}(\R^n)}^{\ell+1} = \mathcal{O}(\|\varepsilon\cdot f\|^{2}_{C_c^\infty(\Omega_e)}),
\end{align*}	
we deduce
$$
\widetilde{\mathcal{T}}_0=\widetilde{\mathcal{T}}_1 = \mathcal{O}(\|\varepsilon\cdot f\|^{2}_{C_c^\infty(\Omega_e)}).
$$
With this, since $U_1$ solves \eqref{EQU_U general}, Proposition~\ref{prop_Holder_linear} gives
\begin{align}
\|U_1\|_{C^s(\R^n)}+ \|U_1\|_{H^s(\R^n)}
\le  C\|\widetilde{\mathcal{T}}_1 \|_{L^\infty(\Omega)}=\mathcal{O}(\|\varepsilon \cdot f\|^{2}_{C^\infty_c(\Omega_e)}).
\end{align}

By the induction argument, assume that for a positive integer $J\in \{2,\ldots, K-1\}$, the following holds for all $0\leq k\leq J$:
$$
\widetilde {\mathcal{T}}_{k} =  \mathcal{O}(\|\varepsilon\cdot f\|^{k+1}_{C^\infty_c(\Omega_e)}),\quad \hbox{ and }\quad\| U_{k}\|_{C^{s}(\R^n)}+ \|U_k\|_{H^s(\R^n)} = \mathcal{O}(\|\varepsilon\cdot f\|^{k+1}_{C^\infty_c(\Omega_e)}).
$$
We will prove that $\widetilde {\mathcal{T}}_{J+1}$ is of order $J+2$. Using \eqref{eq_T_k}, we derive
\begin{align*}
\widetilde{\mathcal{T}}_{J+1} & = \mathbf{P}(U_0) - \sum_{b=2}^{J+1}\mathcal T_b = \sum_{\ell=1}^{K-1}U_0^\ell P_\ell (x,D)U_0 - \sum_{b=1}^{J}\mathcal T_{b+1}
\\ & =
\sum_{\ell=1}^{K-1}U_0^\ell P_\ell(x,D) U_0 - \sum_{b=1}^{J}\sum^{b}_{\ell=1} \sum_{\substack{ \gamma_1,\ldots, \gamma_{\ell+1}\in \mathbb{N}  : \\ \sum_{j=1}^{\ell+1}\gamma_j = b+1 }}  \bigg( \prod_{j=1}^\ell V_{\gamma_j} \bigg) P_\ell(x,D) V_{\gamma_{\ell+1}} 
\\ & =
\sum_{\ell = J+1}^{K-1}U_0^{\ell}P_\ell(x,D)U_0 +  \sum^{J}_{\ell=1} \left(U_0^ {\ell}P_\ell(x,D)U_0 - \underbrace{\sum_{b=\ell}^{J}\sum_{\substack{ \gamma_1,\ldots, \gamma_{\ell+1}\in \mathbb{N} : \\ \sum_{j=1}^{\ell+1}\gamma_j = b+1 }}  \bigg( \prod_{j=1}^\ell V_{\gamma_j} \bigg)    P_\ell(x,D)V_{\gamma_{\ell+1}} }_{I_\ell}\right).
\end{align*}	
Since $\|U_0\|_{C^s(\R^n)} =\mathcal{O}(\|\varepsilon\cdot f\|^{1}_{C_c^\infty(\Omega_e)})$, it follows that the first term satisfies 
\[
\left\| \sum_{\ell = J+1}^{K-1}U_0^{\ell}P_\ell(x,D)U_0 \right\|_{C^s(\R^n)} =\mathcal{O}(\|\varepsilon\cdot f\|^{J+2}_{C_c^\infty(\Omega_e)}). 
\]
It remains to show that the second term in $\widetilde{\mathcal{T}}_{J+1}$ is also of order $J+2$.

Denote $\sigma_0 := 0$ and $\sigma_j := \sum_{s=1}^j\gamma_s$, $1\le j\le \ell+1$. Then $\gamma_j = \sigma_j - \sigma_{j-1}$, and also $\sigma_{l+1}=b+1$. Since $\gamma_j\ge 1$, we can split and reorder the summation 
\begin{align*}
\mathrm{I}_{\ell} &=\sum_{b=\ell}^J\sum_{\substack{ \gamma_1,\ldots, \gamma_{\ell+1}\in \mathbb{N}  : \\ \sum_{j=1}^{\ell+1}\gamma_j = b+1 }}\bigg( \prod_{j=1}^\ell V_{\sigma_j-\sigma_{j-1}} \bigg)    P_\ell(x,D)V_{{\sigma_{\ell+1}-\sigma_{\ell}}}  \\
& = 
\sum_{b=\ell}^J\sum_{\sigma_1 = 1}^{(b-\ell+1)} \ldots\sum_{\sigma_n = 1 + \sigma_{n-1}}^{(b-\ell+n)} \ldots \sum_{\sigma_{\ell}= 1+\sigma_{\ell-1} }^b\bigg( \prod_{j=1}^\ell V_{\sigma_j-\sigma_{j-1}} \bigg)    P_\ell(x,D)V_{{b+1-\sigma_{\ell}}} 
\\ & =
\sum_{\sigma_1 = 1}^{(J-\ell+1)} ... \sum_{\sigma_n = 1 + \sigma_{n-1}}^{(J-\ell+n)} \ldots \sum_{\sigma_\ell = 1 + \sigma_{\ell-1}}^{J} \sum_{b=\sigma_\ell}^J \bigg( \prod_{j=1}^\ell V_{\sigma_j-\sigma_{j-1}} \bigg)    P_\ell(x,D)V_{{b+1-\sigma_{\ell}}} \\
&=  \sum_{\sigma_1 = 1}^{(J-\ell+1)} \ldots \sum_{\sigma_n = 1 + \sigma_{n-1}}^{(J-\ell+n)}\ldots \sum_{\sigma_\ell = 1 + \sigma_{\ell-1}}^{J}  \bigg( \prod_{j=1}^\ell V_{\sigma_j-\sigma_{j-1}} \bigg)    P_\ell(x,D) \left(\sum_{b=\sigma_\ell}^JV_{b+1-\sigma_\ell}\right).
\end{align*}	
We observe that
\begin{align*}
\left(\sum_{b=\sigma_\ell}^JV_{b+1-\sigma_\ell}\right) = \sum_{b=\sigma_\ell}^J(U_{b-\sigma_\ell}- U_{b+1-\sigma_\ell}) =   U_{0} - U_{J+1-\sigma_\ell},
\end{align*}
and as $\sigma_0=0$, we have by \eqref{eq_est_V&T} that
\begin{align*}
\prod_{j=1}^\ell V_{\sigma_j-\sigma_{j-1}}  =  \prod_{j=1}^\ell \underbrace{(U_{\sigma_j-\sigma_{j-1}-1}-U_{\sigma_j-\sigma_{j-1}})}_{\mathcal{O}(\|\varepsilon\cdot f\|^{\sigma_j-\sigma_{j-1}}_{C_c^\infty(\Omega_e)})}= \mathcal{O}(\|\varepsilon\cdot f\|^{\sigma_\ell}_{C_c^\infty(\Omega_e)}).
\end{align*}	
It leads to 
\begin{align*}
\mathrm{I}_\ell 
&=  \sum_{\sigma_1 = 1}^{(J-\ell+1)} \ldots \sum_{\sigma_n = 1 + \sigma_{n-1}}^{(J-\ell+n)} \ldots \sum_{\sigma_\ell = 1 + \sigma_{\ell-1}}^{J}  \underbrace{\bigg( \prod_{j=1}^\ell V_{\sigma_j-\sigma_{j-1}} \bigg)}_{ \mathcal{O}(\|\varepsilon\cdot f\|^{\sigma_\ell}_{C_c^\infty(\Omega_e)})}    P_\ell(x,D) U_0\\
&\quad - \sum_{\sigma_1 = 1}^{(J-\ell+1)}\ldots \sum_{\sigma_n = 1 + \sigma_{n-1}}^{(J-\ell+n)}\ldots \sum_{\sigma_\ell = 1 + \sigma_{\ell-1}}^{J}  \underbrace{\bigg( \prod_{j=1}^\ell V_{\sigma_j-\sigma_{j-1}} \bigg)}_{ \mathcal{O}(\|\varepsilon\cdot f\|^{\sigma_\ell}_{C_c^\infty(\Omega_e)})}    P_\ell(x,D) \underbrace{U_{J+1-\sigma_\ell}}_{\mathcal{O}(\|\varepsilon\cdot f\|^{J+2-\sigma_\ell}_{C_c^\infty(\Omega_e)})},\\
&= \underbrace{\sum_{\sigma_1 = 1}^{(J-\ell+1)} \ldots \sum_{\sigma_n = 1 + \sigma_{n-1}}^{(J-\ell+n)} \ldots \sum_{\sigma_\ell = 1 + \sigma_{\ell-1}}^{J}  \underbrace{\bigg( \prod_{j=1}^\ell V_{\sigma_j-\sigma_{j-1}} \bigg)}_{ \mathcal{O}(\|\varepsilon\cdot f\|^{\sigma_\ell}_{C_c^\infty(\Omega_e)})}    P_\ell(x,D) U_0 }_{\mathrm{I}_{\ell,1}} +\mathcal{O} (\|\varepsilon\cdot f\|^{J+2}_{C_c^\infty(\Omega_e)}),
\end{align*}	
where the order of $U_{J+1-\sigma_\ell}$ is $J+2-\sigma_\ell$ by the induction hypothesis.

Now, we estimate 
\begin{align*}
\mathrm{I}_{\ell,1} 
&=  \sum_{\sigma_1 = 1}^{(J-\ell+1)}\ldots \sum_{\sigma_n = 1 + \sigma_{n-1}}^{(J-\ell+n)}\ldots \sum_{\sigma_{\ell-1} = 1 + \sigma_{\ell-2}}^{J-1}  \bigg( \prod_{j=1}^{\ell-1} V_{\sigma_j-\sigma_{j-1}} \bigg) \underbrace{\bigg(\sum_{\sigma_\ell= 1 +  \sigma_{\ell-1}}^{J} V_{\sigma_\ell-\sigma_{\ell-1}} \bigg)}_{U_0-U_{J-\sigma_{\ell-1}}}  P_\ell(x,D) U_{0}  \\
&=  \sum_{\sigma_1 = 1}^{(J-\ell+1)}\ldots \sum_{\sigma_n = 1 + \sigma_{n-1}}^{(J-\ell+n)}\ldots \sum_{\sigma_{\ell-1} = 1 + \sigma_{\ell-2}}^{J-1}  \bigg( \prod_{j=1}^{\ell-1} V_{\sigma_j-\sigma_{j-1}} \bigg)  U_0  P_\ell(x,D) U_{0} \\
&\quad - \sum_{\sigma_1 = 1}^{(J-\ell+1)}\ldots \sum_{\sigma_n = 1 + \sigma_{n-1}}^{(J-\ell+n)}\ldots \sum_{\sigma_{\ell-1} = 1 + \sigma_{\ell-2}}^{J-1}  \bigg( \prod_{j=1}^{\ell-1} V_{\sigma_j-\sigma_{j-1}} \bigg)   U_{J-\sigma_{\ell-1}}   P_\ell(x,D) U_{0},
\end{align*}	
where the same argument yields that the second term of $I_{\ell,1}$ above is also of order $J+2$.
We continue this procedure, which eventually leads us to 
\begin{align*}
\mathrm{I}_{\ell} = U_0^\ell P_{\ell}(x,D) U_0 + \mathcal{O}(\|\varepsilon\cdot f\|^{J+2}_{C_c^\infty(\Omega_e)}).
\end{align*}
Therefore, we obtain 
\begin{align*}
\widetilde{\mathcal{T}}_{J+1} = \mathcal{O}(\|\varepsilon\cdot f\|^{J+2}_{C_c^\infty(\Omega_e)})+\sum^{J}_{\ell=1} \left(U_0^ {\ell}P_\ell(x,D)U_0 - \mathrm{I}_{\ell}\right) =  \mathcal{O}(\|\varepsilon\cdot f\|^{J+2}_{C_c^\infty(\Omega_e)}),
\end{align*}
and also \eqref{EST:U_j} implies 
$U_{J+1}=  \mathcal{O}(\|\varepsilon\cdot f\|^{J+2}_{C_c^\infty(\Omega_e)})$. This completes the proof.

\end{proof}

\begin{proof}[Proof of Proposition \ref{prop_linearization}]

Theorem~\ref{thm_wellposdness} guarantees the unique existence of a solution $u_\varepsilon\in H^s(\R^n)\cap C^s(\R^n)$ satisfying \begin{align}\label{Prop CS estimate for u}
\|u_\varepsilon\|_{C^s(\R^n)}+\|u_\varepsilon\|_{H^s(\R^n)}\leq C\|\varepsilon \cdot f\|_{C^\infty_c(\Omega_e)}. 
\end{align}
The estimate of $\mathcal{R}_\varepsilon\equiv U_K$ is 
given by Lemma~\ref{prop_est_U&Tilde T} as
\begin{align*}
\|\mathcal{R}_\varepsilon\|_{C^{s}(\R^n)}+\|\mathcal{R}_\varepsilon\|_{H^{s}(\R^n)}\le  C\|\widetilde{\mathcal{T}}_{K}\|_{L^\infty(\Omega)}= \mathcal{O}(\|\varepsilon\cdot f\|^{K+1}_{C^\infty_c(\Omega_e)}).
\end{align*}
\end{proof}

\bibliography{bibitem,BIB251029}
\bibliographystyle{abbrv}
\end{document}